\documentclass[12pt]{amsart}
\usepackage{amsthm}
\usepackage{amssymb}
\usepackage{longtable}
\usepackage{geometry}
\geometry{verbose,tmargin=0.8in,bmargin=0.8in,lmargin=1in,rmargin=1in}
\usepackage{enumerate}
\usepackage{setspace}
\usepackage{pdflscape}
\usepackage{tikz}
\usepackage{tikz-cd}
\usepackage[unicode=true]
 {hyperref}
\hypersetup{
colorlinks=true,
urlcolor=black,
citecolor=blue,
linkcolor=blue,
}
\onehalfspacing
\allowdisplaybreaks

\newtheorem{thm}{Theorem}[section]
\newtheorem{prop}[thm]{Proposition}
\newtheorem{lem}[thm]{Lemma}

\newtheorem{conjecture}[thm]{Conjecture}

\theoremstyle{definition}
\newtheorem{definition}[thm]{Definition}
\newtheorem*{notation}{Notation}

\theoremstyle{remark}
\newtheorem{remark}[thm]{Remark}

\numberwithin{equation}{section}

\newcommand{\ff}{\mathfrak{f}}
\newcommand{\fg}{\mathfrak{g}}
\newcommand{\fh}{\mathfrak{h}}
\newcommand{\Perm}{\mathrm{Perm}}
\newcommand{\Hom}{\mathrm{Hom}}
\newcommand{\Aut}{\mathrm{Aut}}

\newcommand{\Inn}{\mathrm{Inn}}
\newcommand{\Out}{\mathrm{Out}}
\newcommand{\Hol}{\mathrm{Hol}}
\newcommand{\res}{\mathrm{res}}
\newcommand{\ep}{\epsilon}
\newcommand{\GG}{\mathcal{G}}
\newcommand{\conj}{\mathrm{conj}}
\newcommand{\Norm}{\mathrm{Norm}}
\newcommand{\Cent}{\mathrm{Cent}}
\newcommand{\bZ}{\mathbb{Z}}

\begin{document}

\large 

\title[Hopf-Galois structures]{Hopf-Galois structures on finite extensions\\ with almost simple Galois group}

\author{Cindy (Sin Yi) Tsang}
\address{School of Mathematics (Zhuhai), Sun Yat-Sen University, Zhuhai, Guangdong, China}
\email{zengshy26@mail.sysu.edu.cn}
\urladdr{http://sites.google.com/site/cindysinyitsang/} 

\date{\today}

\maketitle

\vspace{-0.5mm}

\begin{abstract}In this paper, we study the Hopf-Galois structures on a finite Galois extension whose Galois group $G$ is an almost simple group in which the socle $A$ has prime index $p$. Each Hopf-Galois structure is associated to a group $N$ of the same order as $G$. We give necessary criteria on these $N$ in terms of their group-theoretic properties, and determine the number of Hopf-Galois structures associated to $A\times C_p$, where $C_p$ is the cyclic group of order $p$. 
\end{abstract}

\vspace{-0.5mm}

\tableofcontents

\section{Introduction}

Given a group $\Gamma$, write $\Perm(\Gamma)$ for its symmetric group, and recall that a subgroup $\mathcal{D}$ of $\Perm(\Gamma)$ is said to be \emph{regular} if the map
\[\label{xi map}
\xi_{\mathcal{D}}:\mathcal{D}\longrightarrow\Gamma;\hspace{1em}\xi_\mathcal{D}(\delta) = \delta(1_\Gamma)\]
is bijective. The images of the left and right regular representations
\[\begin{cases}
\lambda:\Gamma\longrightarrow \Perm(\Gamma);\hspace{1em}\lambda(\gamma) = (x\mapsto \gamma x)\\
\rho:\Gamma\longrightarrow\Perm(\Gamma);\hspace{1em}\rho(\gamma) = (x\mapsto x\gamma^{-1})
\end{cases}\]
of $\Gamma$ are examples of regular subgroups of $\Perm(\Gamma)$. Recall also that
\[\label{Hol1} \Hol(\Gamma) = \rho(\Gamma)\rtimes \Aut(\Gamma)\]
is the \emph{holomorph} of $\Gamma$. Alternatively, it is easy to check that
\[\label{Hol2} \Norm(\lambda(\Gamma)) = \Hol(\Gamma) = \Norm(\rho(\Gamma)),\]
where $\Norm(-)$ denotes the normalizer in $\Perm(\Gamma)$.

\vspace{1mm}

Given a finite Galois extension $L/K$ with Galois group $G$, by work of \cite{GP}, we know that the number of \emph{Hopf-Galois structures} on $L/K$ is equal to
\[ e(G) = \#\{\mbox{regular subgroups of $\Perm(G)$ normalized by $\lambda(G)$}\}.\]
In particular, for each group $N$ of the same order as $G$, the number of Hopf-Galois structures on $L/K$ of \emph{type} $N$ is equal to
\begin{equation}\label{e count} e(G,N) = \#\left\lbrace\begin{array}{c}\mbox{regular subgroups of $\Perm(G)$ which are}\\\mbox{isomorphic to $N$ and normalized by $\lambda(G)$}\end{array}\right\rbrace.\end{equation}
By \cite{By96}, this finer count may be calculated via the formula
\begin{equation}\label{B formula} e(G,N) = \frac{|\Aut(G)|}{|\Aut(N)|}\cdot \#\left\lbrace\begin{array}{c}\mbox{regular subgroups of $\Hol(N)$}\\\mbox{which are isomorphic to $G$}\end{array}\right\rbrace.\end{equation}
The computation of $e(G,N)$ has been a problem of interest; see \cite{Byott pq,Byott almost cyclic,Byott squarefree,Kohl98,Kohl ANT,Tsang PAMS} for some related work. We shall refer the reader to \cite[Chapter 2]{Childs book} for a more detailed discussion on Hopf-Galois structures. 

\vspace{1mm}

This paper is motivated by the case when $G$ is the symmetric group $S_n$ for $n\geq 5$. First, by \cite[Theorems 5 and 9]{Childs simple}, we know that
\begin{align}\label{Sn count1}
    e(S_n,S_n) & = 2 + 2\cdot\#\{\sigma\in A_n: \sigma\mbox{ has order $2$}\},\\\label{Sn count2}
    e(S_n,A_n\times C_2) & = 2\cdot\#\{\sigma\in S_n\setminus A_n: \sigma\mbox{ has order $2$}\},
\end{align}
where $A_n$ is the alternating group and $C_2$ is the cyclic group of order $2$. Also 

\noindent see \cite[Corollaries 6 and 10]{Childs simple}, which give explicit formulae for these two numbers. The case $n=6$ is slightly different because $S_6$ is not the full automorphism group of $A_6$, and as noted on \cite[p. 91]{Childs simple}, we have
\begin{equation}\label{S6 special}
e(S_6,\mathrm{PGL}_2(9))=0\mbox{ and }
e(S_6,\mathrm{M}_{10}) = 72,
\end{equation}
where $\mathrm{M}_{10}$ is the Mathieu group of degree $10$. Recently, the author has shown in \cite{Tsang Sn} that in fact
\begin{equation}\label{Sn count3} e(S_n,N) \neq 0 \mbox{ only if }N\simeq\begin{cases}
S_n, A_n\times C_2& \mbox{for $n\neq 6$},\\
S_6,A_6\times C_2,\mathrm{M}_{10},\mathrm{PGL}_2(9)&\mbox{for $n=6$}.
\end{cases}\end{equation}
Hence, the number $e(S_n,N)$ is known for every group $N$ of order $n!$.

\vspace{1mm}

Recall that a group $\Gamma$ is said to be \emph{almost simple} if 
\[ A \leq \Gamma \leq \Aut(A)\mbox{ for some non-abelian simple group $A$},\]
where $A$ is identified with its inner automorphism group $\Inn(A)$, and in this case $A$ is the socle of $\Gamma$. For $n\geq 5$, the symmetric group $S_n$ is almost simple with socle $A_n$ of index $2$. Note also that $\mathrm{PGL}_2(9)$ and $M_{10}$ are almost simple groups with socle $A_6$ of index $2$.

\vspace{1mm}

The purpose of this paper is to investigate to what extent the results (\ref{Sn count1}), (\ref{Sn count2}), and (\ref{Sn count3}) for the symmetric group may be generalized to an arbitrary finite almost simple group in which its socle has prime index.

\begin{notation}In the rest of this paper, assume that $G$ is a finite almost simple group with socle $A$ such that $A$ has prime index $p$ in $G$. Note that then
\begin{equation}\label{normal sub}
\mbox{$A$ is the unique non-trivial proper normal subgroup of $G$}.
\end{equation}
Also, we shall use the symbol $N$ to denote a group of the same order as $G$.
\end{notation}

For (\ref{Sn count1}), as shown in \cite[Theorem 1.3]{Tsang ASG}, we already know:

\begin{thm}\label{thm old}We have
\begin{align*} e(G,G)& = 2 + 2\cdot\#\{\sigma\in A:\sigma\mbox{ has order $p$}\}\\
& \hspace{4em} +2\cdot \frac{p-2}{p-1} \cdot \#\{\sigma\in G\setminus A:\sigma\mbox{ has order $p$}\},
\end{align*}
provided that $\Inn(G)$ is the only subgroup isomorphic to $G$ in $\Aut(G)$.
\end{thm}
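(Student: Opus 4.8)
The plan is to apply (\ref{B formula}) with $N=G$: since $|\Aut(G)|/|\Aut(G)|=1$, we get $e(G,G)=\#\{\text{regular } D\le\Hol(G):D\cong G\}$. Two obvious examples are $\lambda(G)$ and $\rho(G)$ (accounting for the ``$2$''), and the real work is to locate all the others. Throughout I identify $\Hol(G)=\rho(G)\rtimes\Aut(G)$ acting on the set $G$, with $\Aut(G)=\mathrm{Stab}_{\Hol(G)}(1_G)$, so that a subgroup of order $|G|$ is regular exactly when it meets $\Aut(G)$ trivially.

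First I would pin down the socle of such a $D$. Since $A$ is characteristic in $G$, the partition of $G$ into cosets of $A$ is preserved by $\lambda(G)$, $\rho(G)$ and $\Aut(G)$, hence by all of $\Hol(G)$; let $q\colon\Hol(G)\to\mathrm{Sym}(G/A)\cong S_p$ be the resulting action on the $p$ blocks. A regular $D$ is transitive, so $q(D)$ is transitive; the kernel $\ker(q|_D)$ is normal in $D\cong G$, and excluding the degenerate possibilities (via (\ref{normal sub}) and the fact that $G$ has no faithful transitive action of degree $p$) leaves $B:=\ker(q|_D)=\mathrm{soc}(D)\cong A$ with $q(D)\cong C_p$. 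The block $A$ is fixed setwise by $B$, which acts regularly on it, and restriction to this block embeds $\ker q$ into $\Hol(A)$ (an automorphism of $G$ trivial on $A$ is trivial, giving injectivity). Thus $B$ restricts to a regular subgroup of $\Hol(A)$ isomorphic to the simple group $A$; by the known classification of these (only $\lambda_A(A)$ and $\rho_A(A)$), together with the fact that $\lambda(A),\rho(A)\le\ker q$ restrict exactly to $\lambda_A(A),\rho_A(A)$, injectivity forces $B\in\{\lambda(A),\rho(A)\}$. The inversion permutation $\iota\colon x\mapsto x^{-1}$ normalizes $\Hol(G)$ and swaps $\lambda(A)\leftrightarrow\rho(A)$, so conjugation by $\iota$ identifies the two cases; it suffices to count those $D$ with $\mathrm{soc}(D)=\rho(A)$ and double.

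Next I would parametrize. Writing $G=\bigsqcup_j t^jA$ and analyzing the block action shows that every such $D$ has the form $D_\psi=\{(x^{-1},\psi^{j}):x\in t^jA\}$ for a unique $\psi\in\Aut(G)$ trivial on $G/A$ with $\psi^p=1$, and conversely each $\psi$ yields a regular subgroup $D_\psi$ with socle $\rho(A)$. The surviving condition $D_\psi\cong G$ is one on outer classes: the generator of $D_\psi/\rho(A)$ acts on $\rho(A)\cong A$ with outer class $\theta^{-1}[\psi|_A]\in\Out(A)$, where $\theta$ is the order-$p$ image of $G/A$ in $\Out(A)$, so $D_\psi\cong G$ iff $\langle\theta^{-1}[\psi|_A]\rangle$ is $\Out(A)$-conjugate to $\langle\theta\rangle$. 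I expect the main obstacle to be ruling out \emph{outer} $\psi$, and this is exactly where the hypothesis enters. Conjugation on $\rho(A)$ gives a homomorphism $\Hol(G)\to\Aut(A)$ with kernel $\Cent_{\Hol(G)}(\rho(A))=\lambda(G)$; as $D_\psi\cong G$ is almost simple, $D_\psi\cap\Cent_{\Hol(G)}(\rho(A))=1$, so this map embeds $D_\psi$ into $\Aut(A)$, with image inside $\res(\Aut(G))$. Composing with $\res^{-1}$ realizes $D_\psi$ as a copy of $G$ in $\Aut(G)$, which by hypothesis must be $\Inn(G)$; tracing the elements forces $\psi\in\Inn(G)$. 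Hence only inner $\psi$ contribute.

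Finally I would carry out the count over inner $\psi=\conj_g$ with $g^p=1$. If $g\in A$ then $[\psi|_A]=1$ and the outer-class condition holds, contributing $g=1$ (which recovers $\rho(G)$) together with the order-$p$ elements of $A$. If $g\in G\setminus A$ with $|g|=p$, then $[\psi|_A]=\theta^m$ where $g\in t^mA$, and the condition holds iff $m\neq 1$. The key elementary point is that the nonidentity powers of a fixed order-$p$ element outside $A$ hit each of the $p-1$ nontrivial cosets exactly once, so the order-$p$ elements of $G\setminus A$ are equidistributed among these cosets; discarding the single coset $m=1$ leaves the fraction $\tfrac{p-2}{p-1}$. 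This gives $\#\{D:\mathrm{soc}(D)=\rho(A)\}=1+\#\{\sigma\in A:|\sigma|=p\}+\tfrac{p-2}{p-1}\#\{\sigma\in G\setminus A:|\sigma|=p\}$, and doubling yields the stated formula.
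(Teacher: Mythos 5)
Your argument is correct, but it follows a genuinely different route from the one the paper relies on: Theorem~\ref{thm old} is imported from \cite{Tsang ASG}, whose proof (like Sections~\ref{case1 sec}--\ref{direct prod sec} here) runs through the crossed-homomorphism machinery of Propositions~\ref{fg prop} and~\ref{h prop} --- writing a regular subgroup of $\Hol(G)$ as $\{\rho(\fg(\sigma))\ff(\sigma)\}$, playing $\ker(\ff)$ and $\ker(\fh)$ off against (\ref{normal sub}), and reducing to a count of fixed-point-free pairs of endomorphisms inside the inner holomorph, exactly as in the proof of Theorem~\ref{thm1}. You instead work directly with the block system $G/A$: transitivity of $q(\mathcal{D})$ together with (\ref{normal sub}) identifies $\ker(q|_{\mathcal{D}})$ with the socle, restriction to the block $A$ lands injectively in $\Hol(A)$, and Byott's classification of regular subgroups of $\Hol(A)$ for simple $A$ forces the socle to be $\lambda(A)$ or $\rho(A)$; the inversion map then supplies the overall factor $2$ that the paper's framework extracts from the $\mathcal{N}\leftrightarrow\mathcal{N}^\star$ pairing in Proposition~\ref{formula prop}. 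Both proofs invoke the hypothesis at the same spot (your step realizing $D_\psi$ as a subgroup of $\Aut(G)$ via $\res^{-1}$ is the analogue of forcing $\ff(G)\subset\Inn(G)$, cf.\ Lemma~\ref{InHol lem} and Lemma~\ref{CFSG lem}(c)), and they share the same combinatorial heart: the equidistribution of the order-$p$ elements of $G\setminus A$ among the $p-1$ nontrivial cosets, which is the source of $\frac{p-2}{p-1}$. Your version buys a transparent geometric reading of the three terms ($\rho(G)$ itself; inner twists by order-$p$ elements of $A$, all of which succeed; inner twists by order-$p$ elements of $G\setminus A$, which succeed unless the element lies in the single bad coset $tA$), while the paper's formalism generalizes more readily to other types $N$. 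Two steps you compress do check out but deserve a line each in a final write-up: that every $\mathcal{D}$ with socle $\rho(A)$ is $D_\psi$ for the \emph{fixed} representative $t$ (one must replace $\psi$ by a suitable power when the generator of $G/A$ naturally attached to $\mathcal{D}$ differs from $tA$), and the isomorphism criterion via $\Out(A)$-conjugacy of $\langle\theta^{-1}[\psi|_A]\rangle$ (which, fortunately, you only need in the inner case, where it is immediate).
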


For (\ref{Sn count2}), in Section~\ref{direct prod sec}, we shall prove:

\begin{thm}\label{thm1}We have
\[ e(G,A\times C_p) = 2\cdot \frac{1}{p-1}\cdot\#\{\sigma\in G\setminus A:\sigma\mbox{ has order $p$}\},\]
where $C_p$ is the cyclic group of order $p$. 
\end{thm}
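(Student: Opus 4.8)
The plan is to evaluate $e(G,A\times C_p)$ through Byott's formula (\ref{B formula}), so the core task is to count the regular subgroups of $\Hol(A\times C_p)$ isomorphic to $G$. First I would record that, since $A$ is perfect and centerless while $C_p$ is abelian, the commutator subgroup and center of $N=A\times C_p$ are $A$ and $C_p$ respectively, both characteristic; hence $\Aut(N)=\Aut(A)\times\Aut(C_p)$, and therefore $\Hol(N)\cong\Hol(A)\times\Hol(C_p)$, where $\Hol(C_p)=C_p\rtimes\Aut(C_p)$ has order $p(p-1)$. In particular $|\Aut(N)|=|\Aut(A)|\,(p-1)$.

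Next I would analyse a regular $G'\le\Hol(N)$ with $G'\cong G$ via the two projections $\pi_1,\pi_2$ onto $\Hol(A)$ and $\Hol(C_p)$. Because $G$ has $A$ as its unique proper nontrivial normal subgroup by (\ref{normal sub}), every proper quotient of $G'$ is $C_p$ or trivial; since $\Hol(C_p)$ is solvable of order $p(p-1)$ and $G'$ must move the $C_p$-coordinate, $\pi_2(G')$ is the unique subgroup $\rho(C_p)$ of order $p$, while $\ker(\pi_1|_{G'})=1$. Thus $\pi_1$ identifies $G'$ with a subgroup $P_1:=\pi_1(G')\le\Hol(A)$ isomorphic to $G$, and $G'=\{(x,f(x)):x\in P_1\}$ is the graph of a surjection $f\colon P_1\to\rho(C_p)$ whose kernel is forced to be the socle $A'$ of $P_1$; for fixed $P_1$ there are exactly $p-1$ such $f$. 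Computing the stabilizer of $1_N$, I would show that $G'$ is regular if and only if $A'$ is regular on $A$; since the only regular subgroups of $\Hol(A)$ isomorphic to the simple group $A$ are $\lambda(A)$ and $\rho(A)$, a known property of nonabelian simple groups, this forces $\mathrm{soc}(P_1)\in\{\lambda(A),\rho(A)\}$. The inversion map on $A$ conjugates $\lambda(A)$ to $\rho(A)$ and normalizes $\Hol(A)$, giving a bijection between the two families, so the number of admissible $P_1$ is $2Q_\rho$, where $Q_\rho$ counts those $P_1$ with socle exactly $\rho(A)$. Combining, the regular-subgroup count is $2(p-1)Q_\rho$, and (\ref{B formula}) gives $e(G,A\times C_p)=\tfrac{2|\Aut(G)|}{|\Aut(A)|}\,Q_\rho$.

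It then remains to identify $Q_\rho$. Subgroups $P_1$ with $\rho(A)=\mathrm{soc}(P_1)$ correspond, via the canonical projection $q\colon\Hol(A)\to\Aut(A)$, to order-$p$ subgroups $\overline P\le\Aut(A)$ with $q^{-1}(\overline P)\cong G$; one checks that $\overline P\le\Inn(A)$ yields $A\times C_p\not\cong G$, so $\overline P$ meets $\Inn(A)$ trivially. Reading each $\overline P$ through its $p-1$ generators, $(p-1)Q_\rho$ equals the number of order-$p$ elements $\psi\in\Aut(A)\setminus\Inn(A)$ with $q^{-1}(\langle\psi\rangle)\cong G$. Both $q^{-1}(\langle\psi\rangle)$ and any such $G$ are split extensions of $C_p$ by $A$, so if $G$ is nonsplit both the displayed count and $\#\{\sigma\in G\setminus A:\sigma^p=1\}$ vanish; otherwise the isomorphism holds precisely when $\langle\overline\psi\rangle$ is $\Out(A)$-conjugate to $\overline G:=G/A$. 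Partitioning these $\psi$ according to which of the $c:=[\Out(A):\Norm_{\Out(A)}(\overline G)]$ conjugates $\overline G_i$ of $\overline G$ equals $\langle\overline\psi\rangle$, and noting $q^{-1}(\overline G_i)=G_i$ is an $\Aut(A)$-conjugate of $G$, I would obtain $(p-1)Q_\rho=\sum_{i=1}^{c}\#\{\psi\in G_i\setminus A:\psi^p=1\}=c\cdot\#\{\sigma\in G\setminus A:\sigma^p=1\}$.

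Finally I would invoke the standard fact that for almost simple $G$ the restriction map gives $\Aut(G)\cong\Norm_{\Aut(A)}(G)$, so that $c=[\Aut(A):\Norm_{\Aut(A)}(G)]=|\Aut(A)|/|\Aut(G)|$. Substituting $Q_\rho=\tfrac{c}{p-1}\#\{\sigma\in G\setminus A:\sigma^p=1\}$ into $e(G,A\times C_p)=\tfrac{2|\Aut(G)|}{|\Aut(A)|}\,Q_\rho$ makes $c$ cancel against $|\Aut(G)|/|\Aut(A)|$, yielding the claimed formula. The main obstacle, and the reason no hypothesis like that of Theorem \ref{thm old} is needed, is exactly this last orbit-counting step: summing the order-$p$ elements over all $\Aut(A)$-conjugates of $G$ reproduces precisely the factor $|\Aut(A)|/|\Aut(G)|$, so the dependence on how $G$ embeds in $\Aut(A)$ disappears. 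Establishing $\Aut(G)\cong\Norm_{\Aut(A)}(G)$ and the classification of regular copies of $A$ inside $\Hol(A)$ are the other two points that require care.
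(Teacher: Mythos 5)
Your proposal is correct, but it takes a genuinely different route from the paper's. You work on the $N$-side: you apply Byott's formula (\ref{B formula}) directly, decompose $\Hol(A\times C_p)\cong\Hol(A)\times\Hol(C_p)$, show that a regular copy of $G$ is the graph of one of the $p-1$ surjections onto $\rho(C_p)$ from a subgroup $P_1\le\Hol(A)$ with $P_1\cong G$ whose socle is a regular subgroup of $\Hol(A)$, invoke Byott's theorem that $\lambda(A)$ and $\rho(A)$ are the only such regular subgroups, and finish with an orbit count over the $c=[\Aut(A):\Norm_{\Aut(A)}(G)]$ conjugates of $G$ inside $\Aut(A)$, using $\Aut(G)\cong\Norm_{\Aut(A)}(G)$ so that $c$ cancels against $|\Aut(G)|/|\Aut(A)|$. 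The paper instead works on the $G$-side: Proposition~\ref{formula prop} (via the centralizer duality $\mathcal{N}\leftrightarrow\mathcal{N}^\star$ and Proposition~\ref{1st prop}) converts $e(G,N)$ into twice the number of regular subgroups of $\Hol(G)$ isomorphic to $N$ and normalized by $\lambda(G)$; Lemma~\ref{InHol lem} places these inside $\mathrm{InHol}(G)$, so they are parametrized by fixed-point-free pairs $(f,h)\in\Hom(N,G)^2$, and Lemmas~\ref{normalize lem} and~\ref{fpf lem} isolate exactly the admissible pairs ($\ker(f)=C_p$, $\ker(h)=A$, $h(\ep)\notin A$), whose count immediately gives the formula. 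Your route is self-contained relative to Section~3 but pays for it with the conjugacy bookkeeping --- the identification of $c$ with $|\Aut(A)|/|\Aut(G)|$ is precisely what makes the final answer independent of how $G$ embeds in $\Aut(A)$, as you observe --- and it leans on two external inputs, both legitimately available: the classification of regular subgroups of $\Hol(A)$ for non-abelian simple $A$ from \cite{Byott simple}, and $\Aut(G)\cong\Norm_{\Aut(A)}(G)$, which follows from Lemma~\ref{AS lem}(b) together with the triviality of $Z(A)$. The paper's route keeps $|\Aut(G)|$ out of the computation entirely and reuses machinery (Propositions~\ref{1st prop} and~\ref{formula prop}) that is also the backbone of its other results.
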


Recall that a group $\Gamma$ is said to be \emph{perfect} if it equals its own commutator subgroup $[\Gamma,\Gamma]$, and \emph{quasisimple} if it is perfect and $\Gamma/Z(\Gamma)$ is simple, where $Z(\Gamma)$ is the center of $\Gamma$.

\vspace{1mm}

For (\ref{Sn count3}), we shall study the cases when $N$ is non-perfect and perfect separately. In Sections~\ref{case2 sec} and~\ref{case3 sec}, respectively, we shall prove:

\begin{thm}\label{thm2} If $N$ is non-perfect and $e(G,N)\neq0$, then $N\simeq A\times C_p$ or $N$ is an almost simple group with socle isomorphic to $A$.
\end{thm}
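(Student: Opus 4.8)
The plan is to start from the Byott formula (\ref{B formula}): since $e(G,N)\neq0$, the holomorph $\Hol(N)=\rho(N)\rtimes\Aut(N)$ contains a regular subgroup $G'\cong G$. I would first reduce the whole statement to the single assertion that \emph{$N$ has a normal subgroup $A_1\cong A$ with $[N:A_1]=p$}. Indeed, given such an $A_1$, put $C=\Cent_N(A_1)$; since $A$ is non-abelian simple we have $Z(A_1)=1$, so $A_1\cap C=1$ and $|C|$ divides $[N:A_1]=p$. If $|C|=p$ then $N=A_1\times C\cong A\times C_p$, and if $|C|=1$ then conjugation gives an embedding $N\hookrightarrow\Aut(A_1)=\Aut(A)$ whose image contains $\Inn(A_1)\cong A$, so $N$ is almost simple with socle $A_1\cong A$. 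Thus the stated dichotomy is exactly equivalent to the existence of $A_1$, and it suffices to produce it.

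To locate $A_1$ I would intersect $G'$ with the normal subgroup $\rho(N)\trianglelefteq\Hol(N)$. The subgroup $K:=G'\cap\rho(N)$ is normal in $G'\cong G$, so by (\ref{normal sub}) it is trivial, a copy of the socle, or all of $G'$. Writing $A'$ for the socle of $G'$ (note $[G',G']=A'$, as $G/A\cong C_p$ is abelian while $A$ is perfect), one checks that $K=G'$ exactly when $\rho(N)=G'$, i.e.\ $N\cong G$, which is almost simple with socle $A$; that $K=A'$ exactly when $A'\le\rho(N)$; and that $K=1$ exactly when $A'\cap\rho(N)=1$. In the case $A'\le\rho(N)$, the subgroup $A_1\le N$ corresponding to $A'$ under $\rho$ is isomorphic to $A$ and has index $p$, so it remains only to prove $A_1\trianglelefteq N$. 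Its core in $N$ is normal and contained in the simple group $A_1$, hence equals $A_1$ (and we are done) or is trivial; in the latter case $N$ acts faithfully on the $p$ cosets of $A_1$, giving $A\hookrightarrow S_{p-1}$, which I would contradict by comparing this with the fact that $p\mid|\Out(A)|$ (since $G/A\hookrightarrow\Out(A)$)---a direct check on minimal permutation degrees against outer automorphism orders shows these are incompatible for a non-abelian simple $A$.

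The main obstacle is the case $K=1$, where $A'\cong A$ injects into $\Aut(N)$. Here I would exploit that $N$ is non-perfect: set $V=N/[N,N]$, a nontrivial abelian group, and observe that each element $\rho(n)\alpha$ of $\Hol(N)$ descends to the affine map $v\mapsto\bar\alpha(v)-\bar n$ of $V$, giving a homomorphism $\Psi\colon\Hol(N)\to\Aut(V)\ltimes V$. Restricting to $G'$ yields $\Phi\colon G'\cong G\to\Aut(V)\ltimes V$ whose kernel is normal in $G$, hence $1$, $A'$, or $G'$. The case $\ker\Phi=G'$ is impossible, since it forces $\xi_{G'}(g)\in[N,N]$ for all $g$, whence $N=\xi_{G'}(G')\subseteq[N,N]$ and $N$ is perfect. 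In the case $\ker\Phi=A'$ one gets $\xi_{G'}(A')\subseteq[N,N]$, forcing $|N^{\mathrm{ab}}|\le p$; and in the case $\ker\Phi=1$ the linear part of $\Phi$ embeds $A$ into $\Aut(V)$, restricting $V$ sharply.

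The delicate point, which I expect to be the crux of the entire proof, is to finish the case $K=1$: one must upgrade the normal subgroup of index at most $p$ produced above (together with the constraints on $V=N^{\mathrm{ab}}$) to a normal subgroup \emph{isomorphic to $A$}, rather than merely of the right index. For this I would feed the regularity of $G'$---that is, the cocycle relation tying the translation parts $\xi_{G'}(g)$ to the automorphisms $\alpha_g$---back into the action of the perfect group $A'$ on the characteristic subgroup $[N,N]$, and use the simplicity of $A$ together with the few possibilities for $N^{\mathrm{ab}}$ to identify $[N,N]\cong A$. Establishing this identification, and thereby ruling out every non-perfect $N$ outside the two stated families, is where essentially all of the work lies.
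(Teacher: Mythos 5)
Your opening reduction is exactly the paper's Lemma \ref{copy of A lem}, and your case split on $K=G'\cap\rho(N)$ is a reasonable reframing of the paper's dichotomy on $\ker(\ff)$. But the proposal has a genuine gap precisely where you flag it: in the case $K=1$ you never actually produce a normal subgroup of $N$ isomorphic to $A$, and the sketch you give for doing so is not a proof outline but a restatement of the problem. The paper's route through this bottleneck is quite specific and uses heavy inputs that your plan does not supply. It passes to a \emph{maximal} characteristic subgroup $M\supseteq[N,N]$, so that $N/M\simeq(\bZ/\ell\bZ)^m$ is elementary abelian and the induced action lands in $\mathrm{GL}_m(\ell)$; it then rules out a faithful such action of $A$ (or $G$) using Guralnick's classification of subgroups of prime-power index in simple groups together with the Landazuri--Seitz/Guralnick--Tiep lower bounds on cross-characteristic projective representations of $\mathrm{PSL}_n(q)$; this forces $\fg^{-1}(M)=A$, whence Proposition \ref{char prop}(c) gives $e(A,M)\neq0$, and only then does Byott's theorem for simple Galois groups yield $M\simeq A$. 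Your version, which quotients by $[N,N]$ itself, gives only $A\hookrightarrow\Aut(V)$ for an arbitrary finite abelian $V$ (a weaker and harder-to-exploit conclusion than $A\hookrightarrow\mathrm{GL}_m(\ell)$), and offers no mechanism at all for converting constraints on $N^{\mathrm{ab}}$ into the identification $[N,N]\simeq A$; the step ``$e(A,M)\neq0\Rightarrow M\simeq A$'' is a nontrivial CFSG-dependent theorem, not something recoverable from ``the regularity of $G'$'' by general principles. You also miss the exceptional case $A\simeq\mathrm{PSL}_2(7)$ (which does embed into $\mathrm{GL}_3(2)$), and which the paper must handle by an entirely separate argument via the classification of insolvable groups of order $336$.

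A secondary, smaller gap: in the case $K=A'$ you assert that $A\hookrightarrow S_{p-1}$ together with $p\mid|\Out(A)|$ is impossible by ``a direct check.'' This is true, but it is itself a CFSG case analysis (comparing minimal faithful permutation degrees with diagonal, field and graph automorphism orders across the families of simple groups), not a one-line observation; as written it is asserted rather than proved. So while your scaffolding is sound and parallel to the paper's, essentially all of the content of Theorem \ref{thm2} lies in the parts you have deferred.
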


\begin{thm}\label{thm3}If $N$ is perfect and $e(G,N)\neq0$, then all of the conditions
\begin{enumerate}[(1)]
\item $N$ is a quasisimple group with $N/Z(N)$ isomorphic to $A$;
\item $A$ admits an automorphism having exactly $p$ fixed points;
\item $N/Z(N)$ has an element $\widetilde{\zeta}Z(N)$ of order $p$ such that
\[ \eta\widetilde{\zeta} \equiv \widetilde{\zeta}\eta\hspace{-2mm}\pmod{Z(N)}\mbox{ implies } \eta\widetilde{\zeta}=\widetilde{\zeta}\eta\mbox{ for all $\eta\in N$};\]
\end{enumerate}
hold, and in the case that $Z(N)$ is fixed pointwise by $\Aut(N)$, the condition
\begin{enumerate}[(1)]\setcounter{enumi}{+3}
\item $A$ has an element $\zeta$ of order $p$ such that 
\[\sigma\zeta = \zeta\sigma\mbox{ for some }\sigma\in G\setminus A;\]
\end{enumerate}
holds as well.
\end{thm}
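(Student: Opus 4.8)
The plan is to run everything through Byott's translation \eqref{B formula}: since $e(G,N)\neq 0$ is equivalent to $\Hol(N)$ containing a regular subgroup isomorphic to $G$, I fix such a subgroup $\GG\cong G$ and write $\mathcal{A}$ for its socle, a copy of $A$. The two engines of the argument are the normal subgroups $\rho(N)$ and $\lambda(N)$ of $\Hol(N)$, together with the standard facts $\lambda(N)\cap\rho(N)=\rho(Z(N))$ and $\Cent_{\Hol(N)}(\rho(N))=\lambda(N)$, and the rigidity of $\GG$: by \eqref{normal sub} its only normal subgroups are $1$, $\mathcal{A}$, and $\GG$. Intersecting $\GG$ with the normal subgroups $\rho(N)$ and $\lambda(N)$ thus yields normal subgroups of $\GG$ from this short list, and the whole proof is driven by deciding which possibilities survive the perfectness hypothesis.

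First I would dispose of the extreme cases. If $\GG\cap\rho(N)=\GG$ then $\GG=\rho(N)\cong N$, forcing $N\cong G$; but $G$ is non-perfect (indeed $[G,G]=A$), contradicting the assumption that $N$ is perfect, and the same applies with $\lambda(N)$. Next, $\GG\cap\rho(N)\cap\lambda(N)\le\rho(Z(N))$ is an abelian normal subgroup of $\GG$, hence trivial, so $\GG\cap\rho(N)$ and $\GG\cap\lambda(N)$ cannot both equal the non-abelian $\mathcal{A}$. Analyzing the surviving configuration, I would pass to the preimage $B=\rho^{-1}(\GG\cap\rho(N))\le N$ and to the faithful conjugation action of $\GG$ on $\rho(N)$ (whose kernel is $\GG\cap\lambda(N)$), and combine these with perfectness and the numerical constraint $|N|=p|A|$. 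The aim is to show that $N/Z(N)$ is simple and isomorphic to $A$: a central extension of $A$ by $C_p$ that splits is $A\times C_p$, which is not perfect, so perfectness forces a stem extension, giving $Z(N)\cong C_p$ and $N/Z(N)\cong A$, i.e. condition~(1). I expect this to be the main obstacle, since it is precisely where the global almost-simple structure of $G$ must be converted into the local statement that the unique simple section of $N$ is $A$ and its center is exactly $C_p$, ruling out every other perfect $N$ of order $p|A|$.

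With (1) in hand we have $N/Z(N)\cong A$ and $\GG/\mathcal{A}\cong C_p$, so a generator of $\GG/\mathcal{A}$ is realized by some $g=\rho(n)\theta\in\GG$ whose induced map $\overline{\theta}$ on $N/Z(N)\cong A$ is an automorphism. Here I would exploit that $\GG$ acts simply transitively on $N$, so every non-identity element is fixed-point-free; translating this fixed-point-free condition through the central quotient $N\to N/Z(N)$ forces the associated automorphism of $A$ to have exactly $p$ fixed points, which is~(2). Condition~(3) I would read off as faithfulness of the conjugation action modulo the center: using $\Cent_{\Hol(N)}(\rho(N))=\lambda(N)$ together with the triviality of $\GG\cap\lambda(N)\cap\rho(N)$, an element of $N$ that commutes with the distinguished order-$p$ element of $N/Z(N)$ only up to $Z(N)$ is forced to commute with it exactly, which is the displayed implication in~(3).

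Finally, in the special case where $Z(N)$ is fixed pointwise by $\Aut(N)$, I would use that this hypothesis is exactly what makes $\rho(Z(N))=\lambda(Z(N))$ central in $\Hol(N)$. That extra centrality lets me descend from $\Hol(N)$ to a genuine almost-simple action on $A$: projecting $\GG$ through $\Aut(N)\to\Aut(N/Z(N))=\Aut(A)$ identifies $\GG$ with a copy of $G\le\Aut(A)$, and the commuting relation produced for~(3) survives this projection to yield an element $\zeta\in A$ of order $p$ commuting with some $\sigma\in G\setminus A$, which is~(4). Throughout, conditions~(2)--(4) are extractions from the same regular embedding once the quasisimple structure is secured, so the delicate point remains the quasisimplicity established in the second paragraph.
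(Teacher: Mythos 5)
There is a genuine gap, and it sits exactly where you predict: condition (1). Your intersection analysis of $\GG$ with $\rho(N)$ and $\lambda(N)$ only recovers, in the paper's notation, that $\ker(\ff)$ and $\ker(\fh)$ each lie in $\{1,A\}$ and are not both $A$ --- it says nothing about the normal structure of $N$ itself, and ``perfectness plus $|N|=p|A|$'' is nowhere near enough to force $N$ to be a central extension of $A$. Your sketch for (1) is in fact circular: you argue that a perfect central extension of $A$ by $C_p$ must be a stem extension, but the whole difficulty is showing that $N$ \emph{is} such an extension rather than some other perfect group of order $p|A|$ (e.g.\ simple, or with a more complicated socle). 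The paper gets this by passing to a maximal characteristic subgroup $M$ of $N$, so that $N/M\simeq T^m$ with $T$ non-abelian simple, proving $A$ embeds into $T$ (using solvability of $\Out(T)$, a CFSG fact), and then ruling out $|M|=1$ via Guralnick's classification of subgroups of prime-power index in simple groups together with the standing hypothesis that $p$ divides $|\Out(A)|$ and the formula for $|\Out(\mathrm{PSL}_n(q))|$. None of this heavy input appears in your outline, and I do not see how your configuration analysis in $\Hol(N)$ could substitute for it; even if you pursue the branch $\ker(\ff)=A$ (so that $N$ acquires a subgroup isomorphic to $A$ of index $p$), eliminating the non-normal case again runs into the CFSG classification of transitive groups of prime degree.

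Two smaller points. For (2), the fixed points of the pair $(\ff,\fh)$ are $\fg^{-1}(Z(N))=\langle\zeta\rangle$, which has $p$ elements, but to conclude that the induced automorphism of $A$ has exactly $p$ (rather than just one) fixed points you must show $\zeta\in A$; the paper does this by invoking the CFSG fact that every automorphism of a non-abelian simple group has a non-trivial fixed point, which is absent from your sketch. For (3), ``faithfulness of the conjugation action modulo the center'' is not the operative mechanism: the paper's argument is a computation with the crossed-homomorphism identity that converts a failure of the implication into the equation $\fg(\zeta\sigma)=\fg(\sigma)$, contradicting bijectivity of $\fg$; your proposed derivation from $\mathrm{Cent}_{\Hol(N)}(\rho(N))=\lambda(N)$ does not obviously produce this.
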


For $n\geq 5$, it is known that $\Inn(S_n)$ is the only subgroup isomorphic to $S_n$ in $\Aut(S_n)$. This is because $\Aut(S_n) = \Inn(S_n)$ for $n\neq 6$ and was proven in \cite{S6} for $n=6$. Also, when $N = 2A_n$ is the double cover of $A_n$, condition (3) in Theorem~\ref{thm3} fails by the proof of \cite[Lemma 2.7]{Tsang Sn}. Hence, Theorems~\ref{thm old} to~\ref{thm3} imply the case when $G$ is $S_n$.

\vspace{1mm}

The converse of Theorem~\ref{thm2} is false by (\ref{S6 special}). By Theorem~\ref{thm1}, we have
\[ e(G,A\times C_p) \neq 0 \mbox{ if and only if $G$ splits over $A$ as a group extension}.\]
However, the author does not know whether there is any simple criterion on an almost simple $N$ with socle isomorphic to $A$ such that $e(G,N)\neq0$. Also, she does not know whether there exist any examples of $G$ and perfect $N$ for which all four conditions in Theorem \ref{thm3} are satisfied. It is possible that in fact $e(G,N)=0$ for all perfect $N$, but currently we are unable to prove this, and the conditions in Theorem \ref{thm3} might not be sufficient to rule out these $N$. But observe that if $e(G,N)\neq0$ with $N$ perfect, then $p$ divides the order of the Schur multiplier of $A$ by condition $(1)$ in Theorem~\ref{thm3}. Since $p$ divides the order of the outer automorphism group of $A$ by hypothesis, this already gives restrictions on $G$. We shall discuss more applications of our theorems in Section~\ref{app sec}.

\vspace{1mm}

Finally, let us make one remark. The following is due to N. P. Byott.

\begin{conjecture}\label{conj}Given any finite groups $\Gamma$ and $\Delta$ of the same order, if $\Gamma$ is insolvable and $e(\Gamma,\Delta)\neq0$, then $\Delta$ is also insolvable.
\end{conjecture}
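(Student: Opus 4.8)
Since the conjecture is phrased for arbitrary $\Gamma$ and $\Delta$, I would first convert it into an internal statement about holomorphs. By (\ref{B formula}) we have $e(\Gamma,\Delta)\neq0$ precisely when $\Hol(\Delta)$ contains a regular subgroup isomorphic to $\Gamma$, so the conjecture is equivalent to the assertion that when $\Delta$ is solvable, every regular subgroup of $\Hol(\Delta)$ is solvable. Arguing by contradiction, I would take a pair with $|\Delta|$ minimal such that $\Delta$ is solvable but some regular $\Gamma\leq\Hol(\Delta)$ is insolvable. Writing $\pi\colon\Hol(\Delta)=\rho(\Delta)\rtimes\Aut(\Delta)\to\Aut(\Delta)$ for the projection with kernel $\rho(\Delta)$, the restriction $\pi|_\Gamma$ has kernel $\Gamma\cap\rho(\Delta)$, a subgroup of $\rho(\Delta)\cong\Delta$ and hence solvable; so $\Gamma$ is solvable if and only if $\pi(\Gamma)$ is. The task thus reduces to showing that $\pi(\Gamma)\leq\Aut(\Delta)$ is solvable, with the regularity of $\Gamma$ (equivalently the order relation $|\pi(\Gamma)|=[\rho(\Delta):\Gamma\cap\rho(\Delta)]$ and the fact that $\Gamma\cap\Aut(\Delta)=1$) as the only leverage against the possibly insolvable group $\Aut(\Delta)$.

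The second step is an inductive reduction via a chief-type factor. Suppose $\Delta$ has a nontrivial proper subgroup $V$ that is characteristic in $\Delta$ (equivalently $\rho(V)\trianglelefteq\Hol(\Delta)$), elementary abelian, and satisfies $\rho(V)\leq\Gamma$; this is exactly a nontrivial proper abelian ideal of the skew brace attached to the regular pair $(\rho(\Delta),\Gamma)$. Then the $\rho(V)$-orbits on $\Delta$ are the cosets of $V$ and form a $\Gamma$-invariant block system, and since $\rho(V)\leq\Gamma$ the $\Gamma$-stabilizer of a block has order $|V|$ and therefore equals $\rho(V)$. Hence $\Gamma/\rho(V)$ acts regularly on $\Delta/V$, and because $V$ is characteristic this action normalizes the right regular representation of $\Delta/V$, so it lands in $\Hol(\Delta/V)$. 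As $\Delta/V$ is solvable of strictly smaller order, minimality forces $\Gamma/\rho(V)$ to be solvable, and since $V$ is abelian so is $\Gamma$ — a contradiction. Consequently, in a minimal counterexample no such ideal $V$ can exist.

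This isolates the hard core: an ``ideal-simple'' regular pair, admitting no nontrivial proper abelian ideal, with $\Delta$ solvable and $\Gamma$ insolvable. Here I would invoke the classification of finite simple groups. Since $\pi(\Gamma)\leq\Aut(\Delta)$ is insolvable it has a nonabelian simple composition factor $S$, which is then a section of $\Aut(\Delta)$ of order dividing $|\Gamma|=|\Delta|$. For a solvable $\Delta$, the insolvable part of $\Aut(\Delta)$ can enter only through the general linear action on the elementary abelian chief factors (by a Hall stability-group argument the kernel of the action on a chief series is solvable), so $S$ must be a section of some $\mathrm{GL}_m(\ell)$ attached to a chief factor $C_\ell^{\,m}$ of $\Delta$. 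The plan is to pit the arithmetic of $|\Delta|$ against this: the divisibility $|S|\mid|\Delta|$, the presence of primitive prime divisors of $\ell^{m}-1$ (Zsigmondy) forced by the embedding of $S$, and the regularity-imposed order relation $|\pi(\Gamma)|=[\rho(\Delta):\Gamma\cap\rho(\Delta)]$ should together be incompatible. Note that when $\Delta$ is itself elementary abelian this core case is vacuous, since then $|\Gamma|=|\Delta|$ is a prime power and $\Gamma$ is automatically solvable; all the difficulty is in the mixed-order ideal-simple configurations.

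The main obstacle is precisely this last case. Regularity does not descend to quotients in the absence of an honest ideal, so the insolvable composition factors of $\Gamma$ cannot be peeled off by passing to a smaller group, and one is forced to confront ideal-simple regular pairs head-on and to control, uniformly across the simple groups, how an insolvable section of $\Aut(\Delta)$ can be carried by a regular subgroup of $\Hol(\Delta)$. I expect that a complete argument would require a case analysis over the families in the classification, backed by sharp order and primitive-divisor estimates, and it is the difficulty of making such an analysis uniform that keeps the statement conjectural. What the strategy above does establish unconditionally is the conjecture whenever a nontrivial abelian ideal is available, thereby reducing the general problem to the ideal-simple solvable case.
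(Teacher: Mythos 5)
The fundamental point you need to be aware of: the statement you were given is Conjecture~\ref{conj}, attributed to N.~P.~Byott, and the paper does not prove it --- it only records partial results (\cite{Byott simple} for $\Gamma$ non-abelian simple, \cite{Tsang HG} for the double cover of $A_n$, \cite{Tsang solvable} for cubefree orders and orders below $2000$) and observes that its own Theorem~\ref{thm2} settles the case where $\Gamma$ is almost simple with socle of prime index. Your proposal, to your credit, is honest about this: your final step asserts only that the divisibility constraints, Zsigmondy primes, and the regularity relation ``should together be incompatible,'' and your closing sentence concedes the ideal-simple core remains open. So there is a genuine gap, and it is exactly the gap that makes the statement a conjecture. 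Note moreover that the gap cannot be closed by order arithmetic alone: insolvable and solvable groups of the same order exist in profusion, so any proof must use the regular embedding $\Gamma\leq\Hol(\Delta)$ in an essential structural way, not merely the consequence $|S|$ divides $|\Delta|$ for a simple section $S$ of $\Aut(\Delta)$. Your unconditional steps are correct: the equivalence of solvability of $\Gamma$ and of $\pi(\Gamma)$ via the solvable kernel $\Gamma\cap\rho(\Delta)$, and the quotient reduction when a characteristic elementary abelian $V\leq\Delta$ with $\rho(V)\leq\Gamma$ exists (the block-stabilizer count and the descent of $\Gamma/\rho(V)$ into $\Hol(\Delta/V)$ both check out). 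This circle of ideas is essentially the one exploited in \cite{Tsang solvable}, whereas the paper's own contribution to the conjecture runs in the opposite direction: it fixes the insolvable group $G$ and constrains $N$ through a maximal characteristic subgroup $M$, Proposition~\ref{char prop}, Guralnick's theorem on subgroups of prime-power index \cite{RG}, and cross-characteristic representation bounds \cite{PSL1,PSL2}, concluding in Theorem~\ref{thm2} that $N$ contains a normal copy of $A$ and hence is insolvable.

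Two further cautions about how much your reduction actually buys. First, the hypothesis $\rho(V)\leq\Gamma$ is strong: nothing in a minimal counterexample supplies such an ideal, and a minimal characteristic subgroup of a solvable $\Delta$, though elementary abelian, need not be contained in $\Gamma$; so the ``ideal-simple'' residue is not a thin boundary case but, for all anyone knows, the generic one. The unconditional descent that is available --- Proposition~\ref{char prop}(b),(c), giving that $\fg^{-1}(V)$ is a subgroup of $\Gamma$ yielding a regular subgroup of $\Hol(V)$ --- controls subgroups of $\Gamma$ rather than quotients, and subgroups of an insolvable group can be solvable, which is precisely why this tool alone does not propagate insolvability downward and why the known results need strong hypotheses on $\Gamma$. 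Second, a small repair in your step three: a chief series of $\Delta$ is not $\Aut(\Delta)$-invariant, so before invoking Hall's stability theorem you must pass to a characteristic series with elementary abelian factors (for solvable $\Delta$ such a series exists, e.g.\ by refining the derived series by Frattini quotients); this is fixable but should be said. In sum: your reductions are sound and well aligned with the literature the paper cites, but they do not constitute a proof, and the paper neither has nor claims one.
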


It is known that Conjecture~\ref{conj} holds when $\Gamma$ is non-abelian simple \cite{Byott simple} and when $\Gamma$ is the double cover of $A_n$ for $n\geq 5$ \cite{Tsang HG}. Recently, it was also shown in \cite{Tsang solvable} that Conjecture~\ref{conj} holds when the order of $\Gamma$ and $\Delta$ is cubefree, less than $2000$, or satisfy some suitable conditions. Our Theorem~\ref{thm2} implies that Conjecture~\ref{conj} holds when $\Gamma$ is almost simple in which the socle has prime index. Let us remark that in the preprint \cite{Tsang QS}, the author has also extended the result of \cite{Byott simple} to the case when $\Gamma$ is quasisimple.

\section{Preliminaries}

In this section, let $\Gamma$ be a finite group.

\subsection{Regular subgroups in the holomorph}

Let $\Delta$ be a finite group, not necessarily of the same order as $\Gamma$. Let us recall some known methods which may be used to study regular subgroups of $\Hol(\Gamma)$.

\begin{definition} We have the following definitions.
\begin{enumerate}[$(1)$]
\item Given $\ff\in\Hom(\Delta,\Aut(\Gamma))$, a map $\fg$ from $\Delta$ to $\Gamma$ is said to be a \emph{crossed homomorphism with respect to $\ff$} if 
\begin{equation}\label{g prop}
\fg(\delta_1\delta_2) = \fg(\delta_1)\cdot\ff(\delta_1)(\fg(\delta_2))\mbox{ for all }\delta_1,\delta_2\in \Delta.
\end{equation}
Write $Z_\ff^1(\Delta,\Gamma)$ for the set of all such crossed homomorphisms.
\item Given $\varphi,\psi\in \Hom(\Delta,\Gamma)$, a \emph{fixed point} of $(\varphi,\psi)$ is an element $\delta\in\Delta$ such that $\varphi(\delta)=\psi(\delta)$, and $(\varphi,\psi)$ is said to be \emph{fixed point free} if it has no fixed point other than $1_\Delta$.
\end{enumerate}
\end{definition}

\begin{prop}\label{fg prop}The regular subgroups of $\Hol(\Gamma)$ isomorphic to $\Delta$ are precisely the subgroups of the shape
\[ \mathcal{D} = \{\rho(\fg(\delta))\cdot \ff(\delta):\delta\in \Delta\}\]
as $\ff$ ranges over $\Hom(\Delta,\Aut(\Gamma))$ and $\fg$ over the bijective maps in $Z_\ff^1(\Delta,\Gamma)$.
\end{prop}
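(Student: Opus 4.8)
The plan is to exploit the semidirect product decomposition $\Hol(\Gamma) = \rho(\Gamma)\rtimes\Aut(\Gamma)$ from (\ref{Hol1}), so that every element of $\Hol(\Gamma)$ is written uniquely as $\rho(\gamma)\alpha$ with $\gamma\in\Gamma$ and $\alpha\in\Aut(\Gamma)$. The one computation that drives everything is the conjugation relation $\alpha\,\rho(\gamma)\,\alpha^{-1} = \rho(\alpha(\gamma))$ for $\alpha\in\Aut(\Gamma)$ and $\gamma\in\Gamma$, which follows directly from the definition of $\rho$ and the action of $\Aut(\Gamma)$, together with the fact that $\rho$ is a homomorphism. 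Both inclusions of the claimed equality of sets will be proved by reading this relation in the two directions.

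For the forward inclusion, I would fix $\ff\in\Hom(\Delta,\Aut(\Gamma))$ and a bijective $\fg\in Z_\ff^1(\Delta,\Gamma)$, and consider the map $\theta\colon\Delta\to\Hol(\Gamma)$ given by $\theta(\delta)=\rho(\fg(\delta))\,\ff(\delta)$. Using the conjugation relation to move $\ff(\delta_1)$ past $\rho(\fg(\delta_2))$, I would compute
\[ \theta(\delta_1)\theta(\delta_2) = \rho\big(\fg(\delta_1)\cdot\ff(\delta_1)(\fg(\delta_2))\big)\,\ff(\delta_1)\ff(\delta_2), \]
and then the crossed homomorphism property (\ref{g prop}) together with $\ff$ being a homomorphism collapses the right-hand side to $\theta(\delta_1\delta_2)$. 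Thus $\theta$ is a homomorphism and $\mathcal{D}=\theta(\Delta)$ is a subgroup. To identify the type and check regularity, I would evaluate the orbit map: since $\ff(\delta)$ fixes $1_\Gamma$, one gets $\xi_\mathcal{D}(\theta(\delta))=\theta(\delta)(1_\Gamma)=\fg(\delta)^{-1}$. As inversion is a bijection of $\Gamma$, bijectivity of $\fg$ makes the composite $\delta\mapsto\fg(\delta)^{-1}$ bijective, which forces $\theta$ to be injective (hence $\mathcal{D}\simeq\Delta$) and $\xi_\mathcal{D}$ to be bijective (hence $\mathcal{D}$ regular).

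For the converse, I would start from an arbitrary regular subgroup $\mathcal{D}\leq\Hol(\Gamma)$ together with a fixed isomorphism $\theta\colon\Delta\to\mathcal{D}$, and use the unique decomposition to write $\theta(\delta)=\rho(\fg(\delta))\,\ff(\delta)$, thereby defining maps $\fg\colon\Delta\to\Gamma$ and $\ff\colon\Delta\to\Aut(\Gamma)$. Composing $\theta$ with the projection $\Hol(\Gamma)\to\Aut(\Gamma)$ that kills $\rho(\Gamma)$ shows $\ff\in\Hom(\Delta,\Aut(\Gamma))$, and comparing the $\rho(\Gamma)$-components on both sides of $\theta(\delta_1\delta_2)=\theta(\delta_1)\theta(\delta_2)$ via the same product computation shows that $\fg$ satisfies (\ref{g prop}), that is, $\fg\in Z_\ff^1(\Delta,\Gamma)$. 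Finally, the identity $\xi_\mathcal{D}(\theta(\delta))=\fg(\delta)^{-1}$ together with the bijectivity of $\xi_\mathcal{D}$ (regularity) and of $\theta$ (an isomorphism) forces $\fg$ to be bijective, placing $\mathcal{D}$ in the asserted form.

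The argument is essentially bookkeeping, so the only real point of care is pinning down the conjugation relation and the direction of the inverse in the definition of $\rho$, which together determine exactly where inverses land in the orbit-map computation. Keeping these consistent is what makes both the homomorphism identity and the equivalence between regularity of $\mathcal{D}$ and bijectivity of $\fg$ come out cleanly.
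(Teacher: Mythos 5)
Your proof is correct and takes exactly the route the paper intends: the paper's own "proof" is the one-line remark that the statement follows directly from the decomposition $\Hol(\Gamma)=\rho(\Gamma)\rtimes\Aut(\Gamma)$ (deferring details to an external reference), and your argument is precisely the fleshed-out version of that, via the conjugation relation $\alpha\rho(\gamma)\alpha^{-1}=\rho(\alpha(\gamma))$, the uniqueness of the semidirect-product decomposition, and the orbit computation $\xi_{\mathcal{D}}(\rho(\fg(\delta))\ff(\delta))=\fg(\delta)^{-1}$. All the computations check out.
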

\begin{proof}This follows directly from the definition that $\Hol(\Gamma) = \rho(\Gamma)\rtimes\Aut(\Gamma)$; or see \cite[Proposition 2.1]{Tsang HG} for a proof.
\end{proof}

\begin{prop}\label{h prop}Given $\ff\in\Hom(\Delta,\Aut(\Gamma))$ and $\fg\in Z_\ff^1(\Delta,\Gamma)$, define
\begin{equation}\label{h def}
\fh:\Delta\longrightarrow \Aut(\Gamma);\hspace{1em}\fh(\delta) = \conj(\fg(\delta))\cdot \ff(\delta),
\end{equation}
where $\conj(-) = \lambda(-)\rho(-)$. Then:
\begin{enumerate}[(a)]
\item The map $\fh$ is a homomorphism.
\item The fixed points of $(\ff,\fh)$ are precisely the elements of $\fg^{-1}(Z(\Gamma))$.
\item For all $\delta_1\in\ker(\ff)$ and $\delta_2\in \Delta$, we have $\fg(\delta_1\delta_2) =\fg(\delta_1)\fg(\delta_2)$.
\item For all $\delta_1\in\ker(\fh)$ and $\delta_2\in \Delta$, we have $\fg(\delta_1\delta_2) =\fg(\delta_2)\fg(\delta_1)$.
\end{enumerate}
\end{prop}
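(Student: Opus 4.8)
The plan is to verify each of the four claims directly from the definitions, exploiting the crossed-homomorphism property \eqref{g prop} and the structure of $\Hol(\Gamma)=\rho(\Gamma)\rtimes\Aut(\Gamma)$. The key computational tool will be the commutation relations among $\lambda$, $\rho$, and $\Aut(\Gamma)$ inside $\Perm(\Gamma)$: namely $\alpha\rho(\gamma)\alpha^{-1}=\rho(\alpha(\gamma))$ and $\alpha\lambda(\gamma)\alpha^{-1}=\lambda(\alpha(\gamma))$ for $\alpha\in\Aut(\Gamma)$, together with the fact that $\lambda(\Gamma)$ and $\rho(\Gamma)$ centralize each other so that $\conj(\gamma)=\lambda(\gamma)\rho(\gamma)$ acts as inner conjugation $x\mapsto\gamma x\gamma^{-1}$.

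For part (a), I would compute $\fh(\delta_1)\fh(\delta_2)=\conj(\fg(\delta_1))\ff(\delta_1)\conj(\fg(\delta_2))\ff(\delta_1)^{-1}\ff(\delta_1\delta_2)$, push $\ff(\delta_1)$ past the middle $\conj$-factor using $\alpha\,\conj(\gamma)\,\alpha^{-1}=\conj(\alpha(\gamma))$, and obtain $\conj(\fg(\delta_1)\cdot\ff(\delta_1)(\fg(\delta_2)))\ff(\delta_1\delta_2)$; the crossed-homomorphism identity \eqref{g prop} then collapses the argument of $\conj$ to $\fg(\delta_1\delta_2)$, giving $\fh(\delta_1\delta_2)$. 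For part (b), a fixed point of $(\ff,\fh)$ is a $\delta$ with $\ff(\delta)=\fh(\delta)=\conj(\fg(\delta))\ff(\delta)$, equivalently $\conj(\fg(\delta))=\mathrm{id}$, which holds exactly when $\fg(\delta)\in Z(\Gamma)$; this is the assertion $\delta\in\fg^{-1}(Z(\Gamma))$.

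For parts (c) and (d), I would specialize \eqref{g prop}. If $\delta_1\in\ker(\ff)$ then $\ff(\delta_1)=\mathrm{id}$, so \eqref{g prop} reads $\fg(\delta_1\delta_2)=\fg(\delta_1)\fg(\delta_2)$ immediately, giving (c). Part (d) is the genuinely less trivial one: the hypothesis is $\delta_1\in\ker(\fh)$, i.e. $\conj(\fg(\delta_1))=\ff(\delta_1)^{-1}$, which says the automorphism $\ff(\delta_1)$ equals conjugation by $\fg(\delta_1)^{-1}$. I would substitute this into \eqref{g prop}: $\fg(\delta_1\delta_2)=\fg(\delta_1)\cdot\ff(\delta_1)(\fg(\delta_2))=\fg(\delta_1)\cdot\bigl(\fg(\delta_1)^{-1}\fg(\delta_2)\fg(\delta_1)\bigr)=\fg(\delta_2)\fg(\delta_1)$, as required.

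The main obstacle, such as it is, lies in part (d): one must correctly translate the condition $\fh(\delta_1)=\mathrm{id}$ into a statement about $\ff(\delta_1)$ as an inner automorphism and keep careful track of inverses, since $\conj(\gamma)$ sends $x\mapsto\gamma x\gamma^{-1}$ and the direction of conjugation must match the grouping in \eqref{g prop}. Everything else reduces to bookkeeping with the semidirect product relations, so the only real care needed is ensuring the normalization conventions for $\lambda$, $\rho$, and $\conj$ are applied consistently throughout.
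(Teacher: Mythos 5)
Your proof is correct and follows essentially the same route as the paper: the paper cites an earlier reference for (a) (which is the same direct computation using $\alpha\,\conj(\gamma)\,\alpha^{-1}=\conj(\alpha(\gamma))$ and the cocycle identity), notes (b) exactly as you do via $\conj(\fg(\delta))=\mathrm{Id}_\Gamma\iff\fg(\delta)\in Z(\Gamma)$, and leaves (c) and (d) as easy verifications, which you carry out correctly, including the sign/inverse bookkeeping in (d) where $\ff(\delta_1)=\conj(\fg(\delta_1)^{-1})$ turns the cocycle identity into $\fg(\delta_1\delta_2)=\fg(\delta_2)\fg(\delta_1)$.
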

\begin{proof}See \cite[Proposition 3.4]{Tsang ASG} for (a) and the rest are easily verified. Let us just note that for (b), by definition $\delta\in\Delta$ is a fixed point of $(\ff,\fh)$ if and only if $\conj(\fg(\delta)) = \mathrm{Id}_\Gamma$, which is equivalent to $\fg(\delta)\in Z(\Gamma)$.
\end{proof}

Recall that a subgroup $\Lambda$ of $\Gamma$ is said to be \emph{characteristic} if $\varphi(\Lambda)=\Lambda$ for all $\varphi\in\Aut(\Gamma)$. In this case, clearly $\Lambda$ is normal in $\Gamma$, and
\[\Aut(\Gamma)\longrightarrow \Aut(\Gamma/\Lambda);\hspace{1em} \varphi \mapsto (x\Lambda\mapsto \varphi(x)\Lambda)\]
is a well-defined homomorphism.

\begin{prop}\label{char prop} Let $\Lambda$ be a characteristic subgroup of $\Gamma$. Given 
\[\ff\in\Hom(\Delta,\Aut(\Gamma))\mbox{ and }\fg\in Z_\ff^1(\Delta,\Gamma),\]
they induce two canonical maps
\[ \overline{\ff}_\Lambda: \Delta\longrightarrow\Aut(\Gamma)\longrightarrow\Aut(\Gamma/\Lambda)
\mbox{ and }\overline{\fg}_\Lambda : \Delta\longrightarrow \Gamma\longrightarrow\Gamma/\Lambda,\]
respectively, via compositions with the map $\Aut(\Gamma)\longrightarrow \Aut(\Gamma/\Lambda)$ above and the natural quotient map $\Gamma\longrightarrow\Gamma/\Lambda$. Then:
\begin{enumerate}[(a)]
\item We have $\overline{\ff}_\Lambda\in\Hom(\Delta,\Aut(\Gamma/\Lambda))$ and $\overline{\fg}_\Lambda\in Z_{\overline{\ff}_\Lambda}^1(\Delta,\Gamma/\Lambda)$. 
\item The subset $\fg^{-1}(\Lambda)$ is a subgroup of $\Delta$.
\item In the case that $\fg$ is bijective, there is a regular subgroup of $\Hol(\Lambda)$ which is isomorphic to $\fg^{-1}(\Lambda)$.
\end{enumerate}
\end{prop}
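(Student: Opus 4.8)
The plan is to dispatch the three parts in sequence, the common engine being that a characteristic $\Lambda$ is stable under every automorphism of $\Gamma$; this furnishes two group homomorphisms that I will use repeatedly, namely the quotient map $q\colon\Gamma\to\Gamma/\Lambda$ together with the induced $\Aut(\Gamma)\to\Aut(\Gamma/\Lambda)$, and the restriction map $\res\colon\Aut(\Gamma)\to\Aut(\Lambda)$, $\varphi\mapsto\varphi|_\Lambda$.

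For part (a), $\overline{\ff}_\Lambda$ is the composite of $\ff$ with the homomorphism $\Aut(\Gamma)\to\Aut(\Gamma/\Lambda)$ and so is a homomorphism without further work. For $\overline{\fg}_\Lambda=q\circ\fg$ I would simply apply $q$ to the defining identity (\ref{g prop}); since $q(\ff(\delta_1)(x))=\overline{\ff}_\Lambda(\delta_1)(q(x))$ holds by the very definition of $\overline{\ff}_\Lambda$, the image identity is exactly the crossed-homomorphism relation for $\overline{\fg}_\Lambda$ with respect to $\overline{\ff}_\Lambda$. For part (b) I would note that $\fg^{-1}(\Lambda)=\overline{\fg}_\Lambda^{-1}(1_{\Gamma/\Lambda})$ and check that the preimage of the identity under a crossed homomorphism is a subgroup: closure under products follows from (\ref{g prop}) once we use $\ff(\delta_1)(\Lambda)=\Lambda$, and closure under inverses from the relation $\fg(\delta^{-1})=\ff(\delta^{-1})(\fg(\delta)^{-1})$, which drops out of expanding $\fg(\delta\delta^{-1})=1_\Gamma$ and again uses that $\Lambda$ is characteristic.

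The substance is in part (c), where the idea is to restrict all the data to the subgroup $\Delta_\Lambda:=\fg^{-1}(\Lambda)$ and quote Proposition~\ref{fg prop} with $\Gamma$ replaced by $\Lambda$. I would set $\ff'=\res\circ(\ff|_{\Delta_\Lambda})$ and $\fg'=\fg|_{\Delta_\Lambda}$, verify that $\ff'\in\Hom(\Delta_\Lambda,\Aut(\Lambda))$, and read off $\fg'\in Z^1_{\ff'}(\Delta_\Lambda,\Lambda)$ from (\ref{g prop}) using that $\fg(\delta_2)\in\Lambda$ makes $\ff(\delta_1)(\fg(\delta_2))=\ff'(\delta_1)(\fg(\delta_2))$. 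The only point needing care, and the one I would flag as the main obstacle, is the bijectivity of $\fg'$ \emph{onto $\Lambda$}: injectivity is inherited from $\fg$, but surjectivity must be argued from the surjectivity of $\fg$ onto all of $\Gamma$, so that every $\ell\in\Lambda$ has a $\fg$-preimage $\delta$, which then automatically lies in $\Delta_\Lambda$. With $\fg'$ a bijective crossed homomorphism, Proposition~\ref{fg prop} applied to $\Lambda$ returns a regular subgroup of $\Hol(\Lambda)$ isomorphic to $\Delta_\Lambda=\fg^{-1}(\Lambda)$, as required.
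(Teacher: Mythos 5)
Your proof is correct. The paper itself gives no argument here --- it declares (a) and (b) clear and defers (b) and (c) to citations ([Tsang--Qin, Proposition 3.3] for (c)) --- so there is nothing to diverge from; your verification is the natural one, and in particular your treatment of (c), restricting $\ff$ and $\fg$ to $\fg^{-1}(\Lambda)$ (using that $\Lambda$ is characteristic so that $\Aut(\Gamma)$ restricts to $\Aut(\Lambda)$), checking that $\fg|_{\fg^{-1}(\Lambda)}$ is a bijective crossed homomorphism onto $\Lambda$, and invoking Proposition~\ref{fg prop} with $\Lambda$ in place of $\Gamma$, is exactly the intended mechanism. You also correctly isolate the one point that needs an actual observation, namely that surjectivity of $\fg'$ onto $\Lambda$ follows because any $\fg$-preimage of an element of $\Lambda$ automatically lies in $\fg^{-1}(\Lambda)$.
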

\begin{proof}Both (a) and (b) are clear; see \cite[Lemma 4.1]{Tsang HG} for a proof of (b). For part (c), see \cite[Proposition 3.3]{Tsang solvable}.
\end{proof}

Following the ideas in \cite{Byott simple} or \cite[Section 4]{Tsang HG}, we shall apply Proposition~\ref{char prop} to a maximal characteristic subgroup $\Lambda$ of $\Gamma$. In this case, the quotient $\Gamma/\Lambda$ is a finite non-trivial characteristically simple group, and we know that
\begin{equation}\label{N/M}
\Gamma/\Lambda \simeq T^m,\mbox{ where $T$ is a finite simple group and }m\in\mathbb{N}.
\end{equation}
This shall be a crucial step in the proof of Theorems~\ref{thm2} and~\ref{thm3}.

\subsection{Some group-theoretic facts}

We shall need the following basic properties of groups in which there is a normal copy of $A$ of index $p$.

\begin{lem}\label{copy of A lem}Assume that $\Gamma$ contains a normal subgroup $\Lambda$ isomorphic to $A$ and $[\Gamma:\Lambda]=p$. Then, either $\Gamma \simeq \Lambda\times C_p$ or $\Gamma$ is almost simple with socle $\Lambda$.
\end{lem}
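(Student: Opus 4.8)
The plan is to analyze the centralizer $C = \Cent_\Gamma(\Lambda)$ of $\Lambda$ in $\Gamma$; the entire dichotomy in the conclusion will be governed by whether or not $C$ is trivial. The crucial input is that $\Lambda\simeq A$ is non-abelian simple, so $Z(\Lambda)=1$, and consequently $\Lambda\cap C = \Cent_\Lambda(\Lambda) = Z(\Lambda) = 1$. Since the centralizer of a normal subgroup is itself normal, $C$ is normal in $\Gamma$, so the product $\Lambda C$ is a subgroup of $\Gamma$ containing $\Lambda$. Because $[\Gamma:\Lambda]=p$ is prime, the correspondence theorem leaves no subgroup strictly between $\Lambda$ and $\Gamma$, so either $\Lambda C = \Lambda$ or $\Lambda C = \Gamma$.

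In the first case $C\leq\Lambda$, whence $C = C\cap\Lambda = 1$. The conjugation action of $\Gamma$ on its normal subgroup $\Lambda$ gives a homomorphism $\Gamma\to\Aut(\Lambda)$ whose kernel is exactly $C = 1$, so this map is injective; its restriction to $\Lambda$ realizes $\Lambda$ as $\Inn(\Lambda)$. Identifying $\Lambda$ with $\Inn(\Lambda)$, we obtain $\Lambda\leq\Gamma\leq\Aut(\Lambda)$ with $\Lambda$ non-abelian simple, which is precisely the definition of $\Gamma$ being almost simple with socle $\Lambda$. In the second case $\Lambda C = \Gamma$ with $\Lambda$ and $C$ both normal and $\Lambda\cap C = 1$, so $\Gamma$ is the internal direct product $\Lambda\times C$. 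Counting orders gives $|C| = |\Gamma|/|\Lambda| = p$, hence $C\simeq C_p$ and $\Gamma\simeq\Lambda\times C_p$, as claimed.

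I expect no serious obstacle here: once one focuses on the centralizer, the argument is essentially forced, and primality of $p$ is used only twice—to rule out intermediate subgroups (producing the clean dichotomy $\Lambda C\in\{\Lambda,\Gamma\}$) and to identify $C$ with $C_p$ in the split case. The one point deserving a moment's care is confirming, in the first case, that $\Lambda$ really is the socle in the sense used in this paper; but this is immediate from the injection $\Gamma\hookrightarrow\Aut(\Lambda)$ together with the normality and simplicity of $\Lambda$.
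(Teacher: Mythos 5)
Your proposal is correct and follows essentially the same route as the paper: both arguments hinge on the centralizer $C=\Cent_\Gamma(\Lambda)$ (realized in the paper as the kernel of the conjugation map $\Gamma\to\Aut(\Lambda)$), use $Z(\Lambda)=1$ to get $C\cap\Lambda=1$, and split into the cases $C=1$ (giving the embedding $\Gamma\hookrightarrow\Aut(\Lambda)$ and almost simplicity) and $C\neq1$ (giving $\Gamma=\Lambda\times C$ with $C\simeq C_p$ by the primality of the index). No issues.
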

\begin{proof}Since $\Lambda$ is normal in $\Gamma$, we have a homomorphism
\[ \Phi: \Gamma\longrightarrow \Aut(\Lambda);\hspace{1em}\Phi(\gamma) = (x\mapsto \gamma x \gamma^{-1}).\]
Put $C = \ker(\Phi)$, which is the centralizer of $\Lambda$ in $\Gamma$, and $C\cap\Lambda=1$ because $\Lambda$ has trivial center. If $C\neq1$, then since $[\Gamma:\Lambda]=p$, we deduce that
\[ \Gamma = \Lambda C = \Lambda \times C\mbox{ and }C\simeq C_p.\]
If $C=1$, then $\Gamma$ embeds into $\Aut(\Lambda)$ via $\Phi$, and since $\Phi(\Lambda)=\mathrm{Inn}(\Lambda)$, this implies that $\Gamma$ is almost simple with socle $\Lambda$.
\end{proof}

\begin{lem}\label{N lem}Assume that $\Gamma = A\times C_p$. Then:
\begin{enumerate}[(a)]
\item The non-trivial proper normal subgroups of $\Gamma$ are exactly $A$ and $C_p$.
\item The subgroups $A$ and $C_p$ of $\Gamma$ are characteristic.
\item We have $\Aut(\Gamma) = \Aut(A) \times \Aut(C_p)$.
\end{enumerate}
\end{lem}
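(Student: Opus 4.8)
The plan is to prove part (a) directly by analyzing normal subgroups, then obtain parts (b) and (c) formally. For part (a), let $M$ be a non-trivial proper normal subgroup of $\Gamma = A\times C_p$. The intersection $M\cap A$ is normal in $\Gamma$, hence in $A$, so by simplicity of $A$ it is either $1$ or $A$. If $M\cap A = A$, then $A\leq M\subsetneq \Gamma$; since $[\Gamma:A]=p$ is prime there is no intermediate subgroup, forcing $M=A$. If instead $M\cap A = 1$, then the projection $\Gamma\to C_p$ is injective on $M$, so $M$ is cyclic of order dividing $p$, and being non-trivial it has order exactly $p$. To identify $M$ with the given copy $1\times C_p$, I would take any $(a,c)\in M$ and conjugate by $(x,1)$ for $x\in A$: normality gives $(xax^{-1}a^{-1},1)\in M\cap A = 1$, so $a$ centralizes $A$, whence $a\in Z(A)=1$. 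Thus $M\leq 1\times C_p$, and comparing orders yields $M=C_p$. This conjugation computation, which uses $Z(A)=1$, is the only genuinely non-formal step.

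For part (b), the cleanest route is to recognize $A$ and $C_p$ intrinsically. Since $A$ is non-abelian simple, it is perfect with trivial center, so $Z(\Gamma) = Z(A)\times Z(C_p) = 1\times C_p = C_p$ and $[\Gamma,\Gamma] = [A,A]\times[C_p,C_p] = A\times 1 = A$. As the center and the commutator subgroup of any group are characteristic, both $C_p$ and $A$ are characteristic. (Alternatively, one can argue from part (a): any $\varphi\in\Aut(\Gamma)$ permutes the set $\{A,C_p\}$ of non-trivial proper normal subgroups, and since $|A|$ is composite while $|C_p|=p$, order preservation forbids interchanging them.)

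For part (c), characteristicity lets every $\varphi\in\Aut(\Gamma)$ restrict to $\varphi|_A\in\Aut(A)$ and $\varphi|_{C_p}\in\Aut(C_p)$. Writing $(a,c)=(a,1)(1,c)$ and using $\varphi(A)=A$, $\varphi(C_p)=C_p$ shows $\varphi(a,c) = (\varphi|_A(a),\varphi|_{C_p}(c))$, so $\varphi$ is the componentwise product automorphism. This yields a homomorphism $\Aut(\Gamma)\to\Aut(A)\times\Aut(C_p)$ sending $\varphi\mapsto(\varphi|_A,\varphi|_{C_p})$, which is injective because a $\varphi$ fixing both $A$ and $C_p$ pointwise fixes all of $\Gamma=A\times C_p$. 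Surjectivity is immediate: any pair $(\alpha,\beta)$ assembles into the automorphism $(a,c)\mapsto(\alpha(a),\beta(c))$. Hence the map is an isomorphism, and I expect no obstacle here beyond routine bookkeeping.
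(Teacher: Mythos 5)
Your proposal is correct and follows essentially the same route as the paper: part (a) is proved by intersecting a normal subgroup $M$ with $A$ and using the simplicity of $A$ together with the primality of $[\Gamma:A]$, after which (b) and (c) follow formally. The only cosmetic difference is in the case $M\cap A=1$, where the paper observes that the projection of $M$ onto $A$ is a normal subgroup of exponent dividing $p$ and hence trivial, while you reach the same conclusion via a conjugation argument using $Z(A)=1$; both are valid, as is your alternative derivation of (b) from $Z(\Gamma)=C_p$ and $[\Gamma,\Gamma]=A$.
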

\begin{proof}Let $\Lambda$ be any normal subgroup of $\Gamma$. Note that $\Lambda\cap A$ is normal in $A$. Since $A$ is simple, there are only two possibilities.
\begin{enumerate}[$\bullet$]
\item $\Lambda\cap A =A:$ Then $A\subset \Lambda$, so $\Lambda = A$ or $\Lambda = \Gamma$ since $A$ has prime index in $\Gamma$.
\item $\Lambda\cap A=1:$ Then $\Lambda$ has exponent dividing $p$. The projection of $\Lambda$ onto $A$, which is normal in $A$, hence cannot be $A$ and so must be trivial. It follows that $\Lambda\subset C_p$, so $\Lambda = 1$ or $\Lambda =C_p$.
\end{enumerate}
This proves (a), which in turn implies (b) and then (c).
\end{proof}

\begin{lem}\label{AS lem}Assume that $\Gamma$ is almost simple with socle $A$. Then:
\begin{enumerate}[(a)]
\item The center of $\Gamma$ is trivial;
\item The group $\Aut(\Gamma)$ embeds into $\Aut(A)$ via restriction to $A$.
\end{enumerate}
\end{lem}
\begin{proof}This is well-known; or see \cite[Lemmas 4.1 and 4.3]{Tsang ASG} for a proof.
\end{proof}

The next lemma gives some consequences of the classification of finite simple groups which we shall need.

\begin{lem}\label{CFSG lem}Assume that $\Gamma$ is non-abelian simple. Then:
\begin{enumerate}[(a)]
\item The outer automorphism $\Out(\Gamma)$ of $\Gamma$ is solvable.
\item Every $\varphi\in\Aut(\Gamma)$ has a fixed point other than $1_\Gamma$.
\item There is no subgroup isomorphic to $\Gamma$ in $\Aut(\Gamma)$ other than $\Inn(\Gamma)$.
\end{enumerate}
\end{lem}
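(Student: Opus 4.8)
I would treat the three parts separately, since they are of quite different natures: parts (a) and (b) are genuine consequences of the classification that I would import as known theorems, while part (c) is the one piece with self-contained content, which I would derive from (a) by an elementary argument.

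Part (a) is exactly Schreier's conjecture, namely that $\Out(\Gamma)$ is solvable for every finite non-abelian simple group $\Gamma$. This cannot be established by elementary means, so the plan is simply to invoke it as a theorem justified by the classification. Should one wish to read it off the list, the alternating groups have $\Out$ of order $2$ (order $4$ for $A_6$), the sporadic groups have $\Out$ of order at most $2$, and for each group of Lie type $\Out$ is an extension of the abelian group of diagonal automorphisms by the product of the cyclic groups of field and graph automorphisms, hence solvable in every case.

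Part (b) I would deduce from the theorem of Rowley --- itself a consequence of the classification --- that a finite group admitting a fixed-point-free automorphism is solvable. Concretely, suppose toward a contradiction that some $\varphi\in\Aut(\Gamma)$ had $1_\Gamma$ as its only fixed point, so that $\Cent_\Gamma(\varphi)=1$. Then the cyclic group $\langle\varphi\rangle$ acts fixed-point-freely on $\Gamma$, which by the cited theorem forces $\Gamma$ to be solvable, contradicting the assumption that $\Gamma$ is non-abelian simple. Hence every $\varphi\in\Aut(\Gamma)$ must fix a nontrivial element.

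Part (c) I would obtain from (a). Let $H\leq\Aut(\Gamma)$ with $H\simeq\Gamma$, and set $M=H\cap\Inn(\Gamma)$, which is normal in $H$. Since $H$ is simple, either $M=H$ or $M=1$. If $M=H$, then $H\leq\Inn(\Gamma)$, and as $Z(\Gamma)=1$ gives $|\Inn(\Gamma)|=|\Gamma|=|H|$, we conclude $H=\Inn(\Gamma)$. If $M=1$, then the quotient map $\Aut(\Gamma)\to\Out(\Gamma)$ restricts to an injection of $H$ into $\Out(\Gamma)$, so the non-abelian simple group $\Gamma\simeq H$ embeds into the solvable group $\Out(\Gamma)$ of part (a), which is absurd; thus this case does not occur, and $H=\Inn(\Gamma)$ as claimed. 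The only real obstacle lies in the inputs rather than the deductions: parts (a) and (b) rest on the classification and cannot be proven elementarily, so the care required is in citing the correct CFSG consequences, after which (c) follows from the short normal-subgroup argument above.
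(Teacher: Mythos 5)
Your proposal is correct and matches the paper's treatment, which disposes of all three parts by citation: (a) and (b) are exactly the CFSG consequences in \cite[Theorems 1.46 and 1.48]{G book} (Schreier's conjecture and the solvability of groups with a fixed-point-free automorphism), and your normal-subgroup argument deducing (c) from (a) — intersect a simple $H\leq\Aut(\Gamma)$ with $\Inn(\Gamma)$ and rule out the trivial intersection via solvability of $\Out(\Gamma)$ — is the standard proof behind the cited \cite[Corollary 5.3]{Tsang ASG}. The only cosmetic quibble is the attribution in (b): the single-automorphism solvability statement is the one in Gorenstein's book, whereas Rowley's theorem concerns coprime fixed-point-free automorphism groups; the logical content is the same CFSG input either way.
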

\begin{proof}See \cite[Theorems 1.46 and 1.48]{G book} and \cite[Corollary 5.3]{Tsang ASG}.
\end{proof}

\section{The case when $N$ has a normal copy of $A$}\label{case1 sec}

In this section, assume that $N$ contains $A$ as a normal subgroup, in which case $[N:A]=p$ because $N$ is assumed to have the same order as $G$. Then, by Lemma~\ref{copy of A lem}, either $N\simeq A\times C_p$ or $N$ is almost simple with socle $A$. Let us prove an alternative formula for the number $e(G,N)$ which is similar but not quite the same as (\ref{B formula}). 

\subsection{A key observation} Let us first prove:

\begin{prop}\label{1st prop}A regular subgroup $\GG$ of $\Hol(N)$ isomorphic to $G$, which is not equal to $\lambda(N)$ or $\rho(N)$, is normalized by exactly one of $\lambda(N)$ and $\rho(N)$.
\end{prop}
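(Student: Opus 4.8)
The plan is to parametrize $\GG$ using Proposition~\ref{fg prop}: write $\GG=\{\rho(\fg(\gamma))\ff(\gamma):\gamma\in G\}$ for a homomorphism $\ff\in\Hom(G,\Aut(N))$ and a bijective $\fg\in Z^1_\ff(G,N)$, and bring in the companion homomorphism $\fh=\conj(\fg)\cdot\ff$ from Proposition~\ref{h prop}. Since $\lambda(N)$ and $\rho(N)$ are both normal in $\Hol(N)=\Norm(\lambda(N))=\Norm(\rho(N))$, the subgroup $\GG$ automatically normalizes each of them, so the content is the reverse containment. First I would record the two conjugation formulas $g\lambda(n)g^{-1}=\lambda(\ff(\gamma)(n))$ and $g\rho(n)g^{-1}=\rho(\fh(\gamma)(n))$ for $g=\rho(\fg(\gamma))\ff(\gamma)$, which show that $\GG$ acts on $\lambda(N)$ through $\ff$ and on $\rho(N)$ through $\fh$. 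The resulting commutators are $[\GG,\lambda(N)]=\lambda([\ff(G),N])$ and $[\GG,\rho(N)]=\rho([\fh(G),N])$, while the intersections are $\GG\cap\rho(N)=\rho(\fg(\ker\ff))$ and $\GG\cap\lambda(N)=\lambda(\fg(\ker\fh))$. Hence $\lambda(N)$ normalizes $\GG$ exactly when $[\ff(G),N]\subseteq\fg(\ker\fh)$, and $\rho(N)$ normalizes $\GG$ exactly when $[\fh(G),N]\subseteq\fg(\ker\ff)$.

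Next I would pass to the socle. Since $A$ is the unique non-trivial proper normal subgroup of $G$ by (\ref{normal sub}), both $\ker\ff$ and $\ker\fh$ lie in $\{1,A,G\}$, and $\ff|_A,\fh|_A$ are each either trivial or injective because $A$ is simple. The hypothesis $\GG\neq\lambda(N),\rho(N)$ rules out $\ker\ff=G$ and $\ker\fh=G$. The two maps cannot both be trivial: if they were, then $\conj(\fg(\alpha))=\fh(\alpha)\ff(\alpha)^{-1}$ would be trivial for every $\alpha\in A$, forcing $\fg(A)\subseteq Z(N)$, which is impossible since $\fg|_A$ is injective with non-abelian image while $Z(N)$ is abelian. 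Establishing that they cannot both be injective is the crux, and the main obstacle.

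For that crux, suppose $\ff|_A$ and $\fh|_A$ are both injective. Using Lemma~\ref{AS lem} (or Lemma~\ref{N lem}(c) together with perfectness of $A$) I would embed $\Aut(N)$ into $\Aut(A)$ by restriction to the socle $A_N$ of $N$, and invoke Lemma~\ref{CFSG lem}(c) to conclude that the copies $\ff(A)$ and $\fh(A)$ of $A$ restrict to $\Inn(A_N)$; thus $\ff(\alpha)$ and $\fh(\alpha)$ are inner automorphisms of $N$ induced by elements of $A_N$, say $\ff=\conj_N\circ\phi$ and $\fh=\conj_N\circ\psi$ with isomorphisms $\phi,\psi\colon A\xrightarrow{\sim}A_N$. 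Comparing with $\fh=\conj(\fg)\ff$ and using $Z(N)\cap A_N=1$ forces $\fg|_A$ to take values in $A_N$ and to equal the map $\alpha\mapsto\psi(\alpha)\phi(\alpha)^{-1}$; writing $\psi=\theta\circ\phi$ for $\theta\in\Aut(A_N)$, bijectivity of $\fg|_A$ onto $A_N$ becomes bijectivity of $a\mapsto\theta(a)a^{-1}$, which holds only if $\theta$ is fixed point free. This contradicts Lemma~\ref{CFSG lem}(b). The case $N=A\times C_p$ needs the extra step that the $C_p$-component of $\fg|_A$ is a homomorphism $A\to C_p$, hence trivial by perfectness, so that $\fg(A)\subseteq A_N$ as before.

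Finally, with exactly one of $\ff|_A,\fh|_A$ trivial, say $\fh|_A$ trivial and $\ff|_A$ injective, I would have $\ker\fh=A$ and $\ker\ff=1$. Then $\fg(A)=\fg(\ker\fh)$ is a subgroup isomorphic to $A$ of index $p$ in $N$, which must be $A_N$ since $A_N$ is the only such subgroup (any index-$p$ subgroup meeting $A_N$ in a proper subgroup would force a normal subgroup of index $p$ in the simple group $A$). The induced map $G\to\Aut(N/A_N)\cong\Aut(C_p)$ kills $A=[G,G]$ and factors through $G/A\cong C_p$, hence is trivial since $\gcd(p,p-1)=1$; therefore $[\ff(G),N]\subseteq A_N=\fg(\ker\fh)$ and $\lambda(N)$ normalizes $\GG$. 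On the other hand $[\fh(G),N]\neq 1=\fg(\ker\ff)$ because $\fh(G)\cong C_p$ acts non-trivially, so $\rho(N)$ does not normalize $\GG$. The symmetric case gives the opposite conclusion, proving that exactly one of $\lambda(N),\rho(N)$ normalizes $\GG$.
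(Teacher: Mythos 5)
Your argument is correct and follows essentially the same route as the paper: parametrize $\GG$ by $(\ff,\fg,\fh)$, use (\ref{normal sub}) to reduce to the kernels being $1$ or $A$, rule out ``both trivial on $A$'' via the center and ``both injective on $A$'' via Lemma~\ref{CFSG lem}(b),(c) after establishing $\fg(A)=A$, and then check normalization in the two remaining cases. Your repackaging of the normalization conditions as the single criteria $[\ff(G),N]\subseteq\fg(\ker\fh)$ and $[\fh(G),N]\subseteq\fg(\ker\ff)$ is a clean but minor variant of the paper's Lemmas~\ref{fh injective} and~\ref{fhA lem}.
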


Let $\GG$ be a regular subgroup of $\Hol(N)$ isomorphic to $G$ which is not equal to $\lambda(N)$ or $\rho(N)$. By Proposition~\ref{fg prop}, we know that
\[\GG = \{\rho(\fg(\sigma))\cdot \ff(\sigma): \sigma\in G\},\mbox{ where }
\begin{cases}\ff\in\Hom(G,\Aut(N)),\\
\fg\in Z_\ff^1(G,N)\mbox{ is bijective}.\end{cases}\]
We may also rewrite it as
\begin{equation}\label{GG2}  \GG = \{\lambda(\fg(\sigma))^{-1}\cdot \fh(\sigma): \sigma\in G\},\mbox{ where }\fh\in\Hom(G,\Aut(N))\end{equation}
is defined as in (\ref{h def}). Note that both $\ff$ and $\fh$ are non-trivial because 
\[ \begin{cases}
\GG\subset \rho(N) & \mbox{if $\ff$ were trivial},\\
\GG\subset\lambda(N)&\mbox{if $\fh$ were trivial},
\end{cases}\]
in which case we would have equality by the regularity of $\GG$. From (\ref{normal sub}), we then deduce that $\ker(\ff)$ and $\ker(\fh)$ are either trivial or equal to $A$.

\begin{lem}\label{fh injective}The following are true.
\begin{enumerate}[(a)]
\item If $\ff$ is injective, then $\GG$ is not normalized by $\rho(N)$.
\item If $\fh$ is injective, then $\GG$ is not normalized by $\lambda(N)$.
\end{enumerate}
\end{lem}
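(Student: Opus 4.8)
The plan is to treat (a) and (b) by one symmetric argument, exploiting the two descriptions of $\GG$ together with the semidirect product structure $\Hol(N) = \rho(N)\rtimes\Aut(N) = \lambda(N)\rtimes\Aut(N)$. The only facts I would use about the action of $\Aut(N)$ are $\alpha\rho(n)\alpha^{-1} = \rho(\alpha(n))$ and $\alpha\lambda(n)\alpha^{-1} = \lambda(\alpha(n))$, which follow directly from the definitions of $\rho$, $\lambda$, and the conjugation action. The crucial structural observation is that conjugation by an element of the \emph{normal} subgroup $\rho(N)$ (respectively $\lambda(N)$) of $\Hol(N)$ leaves the $\Aut(N)$-coordinate of the semidirect product decomposition unchanged; this is what allows the injectivity hypothesis to bite.

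For (a), I would argue by contradiction, assuming $\rho(N)$ normalizes $\GG$. Using the description $\GG = \{\rho(\fg(\sigma))\ff(\sigma):\sigma\in G\}$, I would conjugate a general element by $\rho(m)$ and compute in $\rho(N)\rtimes\Aut(N)$, obtaining
\[ \rho(m)\cdot\rho(\fg(\sigma))\ff(\sigma)\cdot\rho(m)^{-1} = \rho\big(m\,\fg(\sigma)\,\ff(\sigma)(m)^{-1}\big)\,\ff(\sigma), \]
whose $\Aut(N)$-coordinate is still $\ff(\sigma)$. Since $\ff$ is injective and the decomposition in the semidirect product is unique, the only element of $\GG$ with $\Aut(N)$-coordinate $\ff(\sigma)$ is $\rho(\fg(\sigma))\ff(\sigma)$ itself, so the conjugate must equal it. Comparing $\rho$-coordinates gives $m\,\fg(\sigma)\,\ff(\sigma)(m)^{-1} = \fg(\sigma)$ for all $m\in N$, that is, $\ff(\sigma) = \conj(\fg(\sigma))^{-1}$ for every $\sigma\in G$.

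From $\fh(\sigma) = \conj(\fg(\sigma))\ff(\sigma)$ this forces $\fh$ to be trivial, contradicting the non-triviality of $\fh$ recorded before the lemma (equivalently, $\GG\neq\lambda(N)$). Part (b) is then entirely symmetric: using instead the description $\GG = \{\lambda(\fg(\sigma))^{-1}\fh(\sigma):\sigma\in G\}$ from (\ref{GG2}) and conjugating by $\lambda(m)$, injectivity of $\fh$ forces $\fh(\sigma) = \conj(\fg(\sigma))$ for all $\sigma$, whence $\ff$ is trivial, again a contradiction. I do not anticipate a genuine obstacle here; the only thing to be careful about is the bookkeeping in the semidirect product and the verification that the $\Aut(N)$-coordinate is preserved under conjugation by $\rho(N)$ (respectively $\lambda(N)$), which is precisely the point that makes the uniqueness-of-decomposition step valid.
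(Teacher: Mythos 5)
Your argument is correct and is essentially the paper's own proof: the same conjugation computation $\rho(\eta)\cdot\rho(\fg(\sigma))\ff(\sigma)\cdot\rho(\eta)^{-1} = \rho(\eta\fg(\sigma)\ff(\sigma)(\eta)^{-1})\ff(\sigma)$, the same use of injectivity of $\ff$ (via uniqueness of the semidirect product decomposition) to pin down the only possible target element, and the same conclusion that membership for all $\eta$ would force $\fh$ to be trivial, contradicting the non-triviality recorded before the lemma; part (b) is handled symmetrically via (\ref{GG2}) in both. The only cosmetic difference is that you phrase the resulting condition as $\ff(\sigma)=\conj(\fg(\sigma))^{-1}$ while the paper writes it as $\fh(\sigma)(\eta)=\eta$, which are the same constraint.
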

\begin{proof}Suppose that $\ff$ is injective. For any $\sigma\in G$ and $\eta\in N$, we have
\[ \rho(\eta)\cdot \rho(\fg(\sigma))\ff(\sigma)\cdot \rho(\eta)^{-1} = \rho(\eta\fg(\sigma)\ff(\sigma)(\eta)^{-1})\cdot \ff(\sigma).\]
By the injectivity of $\ff$, the above element lies in $\GG$ if and only if
\[ \eta\fg(\sigma)\ff(\sigma)(\eta)^{-1} = \fg(\sigma),\mbox{ or equivalently }\fh(\sigma)(\eta) = \eta.\]
But $\fh$ is non-trivial and so $\GG$ is not normalized by $\rho(G)$. This proves (a), and a similar argument using (\ref{GG2}) shows (b).
\end{proof}

Note that $A$ is characteristic in $N$. This is Lemma~\ref{N lem}(b) if $N\simeq A\times C_p$, and is because $A$ is the socle of $N$ if $N$ is almost simple. Hence, we have
\[ \overline{\ff}_A,\overline{\fh}_A\in\Hom(G,\Aut(N/A))\mbox{ and }\overline{\fg}_A\in Z_{\overline{\ff}_A}^1(G,N/A)\]
defined as in Proposition~\ref{char prop} and (\ref{h def}). Note that
\[ \Aut(N/A) \simeq \Aut(C_p) \simeq C_{p-1} \mbox{ (cyclic group of order $p-1$)}.\]
This, together with (\ref{normal sub}), implies that $\overline{\ff}_A$ is trivial, and so $\overline{\fg}_A$ is a homomorphism by Proposition~\ref{h prop}(c). But $N/A\simeq C_p$, and $\overline{\fg}_A$ is surjective because $\fg$ is bijective. Again from (\ref{normal sub}), we see that $\ker(\overline{\fg}_A)=A$, which gives $\fg(A)=A$. This equality shall be important in the arguments that follow. Note that $\overline{\fh}_A$ is trivial similarly by (\ref{normal sub}).

\vspace{1mm}

For any $\sigma\in G$ and $\eta\in N$, since $\overline{\ff}_A$ and $\overline{\fh}_A$ are trivial, we have
\[ \eta\cdot\ff(\sigma)(\eta)^{-1}\in A\mbox{ and }\eta\cdot \fh(\sigma)(\eta)^{-1}\in A.\]
Since $\fg(A) = A$, there exist $\sigma_{\eta,\ff},\sigma_{\eta,\fh}\in A$ such that
\[\fg(\sigma_{\eta,\ff}) =  \eta\cdot\ff(\sigma)(\eta)^{-1}\mbox{ and }\fg(\sigma_{\eta,\fh}) = \eta\cdot \fh(\sigma)(\eta)^{-1}.\]
Let us rewrite the above as
\begin{align*}
\fg(\sigma_{\eta,\ff})\fg(\sigma)^{-1} & = \eta\fg(\sigma)^{-1}\fh(\sigma)(\eta)^{-1},\\
\fg(\sigma_{\eta,\fh})\fg(\sigma) & = \eta\fg(\sigma)\ff(\sigma)(\eta)^{-1}.\end{align*}
We may now prove the next lemmas.

\begin{lem}\label{fhA lem}The following are true.
\begin{enumerate}[(a)]
\item If $\ker(\ff)=A$, then $\GG$ is normalized by $\rho(N)$.
\item If $\ker(\fh)=A$, then $\GG$ is normalized by $\lambda(N)$.
\end{enumerate}
\end{lem}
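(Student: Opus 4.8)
The plan is to prove both statements by a direct conjugation computation, using in each case the representation of $\GG$ best adapted to it. For part (a), I would take $\GG = \{\rho(\fg(\sigma))\ff(\sigma):\sigma\in G\}$ and conjugate a generic element by $\rho(\eta)$, $\eta\in N$. Exactly as in the proof of Lemma~\ref{fh injective}(a), this yields
\[ \rho(\eta)\cdot\rho(\fg(\sigma))\ff(\sigma)\cdot\rho(\eta)^{-1} = \rho\big(\eta\,\fg(\sigma)\,\ff(\sigma)(\eta)^{-1}\big)\cdot\ff(\sigma), \]
so it suffices to exhibit $\tau\in G$ with $\ff(\tau)=\ff(\sigma)$ and $\fg(\tau)=\eta\,\fg(\sigma)\,\ff(\sigma)(\eta)^{-1}$. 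This is where the second of the two rewrites displayed above enters: it gives $\eta\,\fg(\sigma)\,\ff(\sigma)(\eta)^{-1}=\fg(\sigma_{\eta,\fh})\fg(\sigma)$ with $\sigma_{\eta,\fh}\in A$.

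Next I would invoke the hypothesis $\ker(\ff)=A$. By Proposition~\ref{h prop}(c) applied with $\delta_1=\sigma_{\eta,\fh}\in\ker(\ff)$ and $\delta_2=\sigma$, the product $\fg(\sigma_{\eta,\fh})\fg(\sigma)$ collapses to $\fg(\sigma_{\eta,\fh}\sigma)$. Setting $\tau=\sigma_{\eta,\fh}\sigma$, normality of $A$ gives $\tau\in\sigma A$, hence $\ff(\tau)=\ff(\sigma)$ because $\ker(\ff)=A$. Thus the conjugate equals $\rho(\fg(\tau))\ff(\tau)\in\GG$, proving (a).

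For part (b) I would run the mirror-image argument on the representation~(\ref{GG2}), conjugating $\lambda(\fg(\sigma))^{-1}\fh(\sigma)$ by $\lambda(\eta)$ to obtain $\lambda\big(\eta\,\fg(\sigma)^{-1}\,\fh(\sigma)(\eta)^{-1}\big)\fh(\sigma)$, and then use the first displayed rewrite, $\eta\,\fg(\sigma)^{-1}\,\fh(\sigma)(\eta)^{-1}=\fg(\sigma_{\eta,\ff})\fg(\sigma)^{-1}$ with $\sigma_{\eta,\ff}\in A$. The hypothesis is now $\ker(\fh)=A$, so the relevant tool is Proposition~\ref{h prop}(d). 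The one extra wrinkle compared with (a) is the inverse: I need $\fg(\sigma)\fg(\sigma_{\eta,\ff})^{-1}$ to be a value of $\fg$. Applying (d) with $\delta_1=\sigma_{\eta,\ff}^{-1}\in\ker(\fh)$ gives $\fg(\sigma_{\eta,\ff}^{-1}\sigma)=\fg(\sigma)\fg(\sigma_{\eta,\ff}^{-1})$, and a second application of (d) on $A$ together with $\fg(1_G)=1_N$ shows $\fg(\sigma_{\eta,\ff}^{-1})=\fg(\sigma_{\eta,\ff})^{-1}$. Setting $\tau=\sigma_{\eta,\ff}^{-1}\sigma\in\sigma A$ then yields $\fh(\tau)=\fh(\sigma)$ and $\fg(\tau)=\fg(\sigma)\fg(\sigma_{\eta,\ff})^{-1}$, so the conjugate is $\lambda(\fg(\tau))^{-1}\fh(\tau)\in\GG$.

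I expect the only genuinely delicate point to be the bookkeeping in (b): one must notice that Proposition~\ref{h prop}(d) makes $\fg$ \emph{anti}-multiplicative on $A=\ker(\fh)$, so that inverses land correctly and the element $\sigma_{\eta,\ff}^{-1}\sigma$ (rather than $\sigma_{\eta,\ff}\sigma$) is the right one to feed in. Everything else is a routine transport across the identities $\phi\rho(x)\phi^{-1}=\rho(\phi(x))$ and $\phi\lambda(x)\phi^{-1}=\lambda(\phi(x))$ for $\phi\in\Aut(N)$, together with the fact $\fg(A)=A$ established just before the lemma.
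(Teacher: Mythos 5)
Your proof is correct and follows essentially the same route as the paper: conjugate a generic element, use the two displayed rewrites involving $\sigma_{\eta,\fh}$ and $\sigma_{\eta,\ff}$, and collapse the product via Proposition~\ref{h prop}(c) (resp.\ (d)). The paper dispatches part (b) with ``a similar argument,'' and your careful handling of the anti-multiplicativity of $\fg$ on $\ker(\fh)$ --- taking $\tau=\sigma_{\eta,\ff}^{-1}\sigma$ so that inverses land correctly --- is exactly the bookkeeping that makes that remark legitimate.
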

\begin{proof}Suppose that $\ker(\ff)=A$. For any $\sigma\in G$ and $\eta\in N$, we have
\[ \rho(\eta)\cdot \rho(\fg(\sigma))\ff(\sigma)\cdot \rho(\eta)^{-1} = \rho(\fg(\sigma_{\eta,\fh})\fg(\sigma))\cdot\ff(\sigma),\]
where $\sigma_{\eta,\fh}\in A$. Since $\ker(\ff)=A$, from Proposition~\ref{h prop}(c), we deduce that
\[\rho(\fg(\sigma_{\eta,\fh})\fg(\sigma))\cdot\ff(\sigma) = \rho(\fg(\sigma_{\eta,\fh}\sigma))\cdot \ff(\sigma_{\eta,\fh}\sigma),\]
whence $\GG$ is normalized by $\rho(N)$. This proves (a). A similar argument using (\ref{GG2}) and Proposition~\ref{h prop}(d) shows (b).
\end{proof}

\begin{lem}The kernels $\ker(\ff)$ and $\ker(\fh)$ are not both trivial or both $A$.
\end{lem}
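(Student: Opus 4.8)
The plan is to establish the two impossibilities separately. The payoff is that, together with Lemmas~\ref{fh injective} and~\ref{fhA lem}, this lemma yields Proposition~\ref{1st prop}: those two lemmas give that $\rho(N)$ normalizes $\GG$ exactly when $\ker(\ff)=A$ and that $\lambda(N)$ normalizes $\GG$ exactly when $\ker(\fh)=A$, so ruling out ``both trivial'' and ``both $A$'' is precisely what makes exactly one of $\lambda(N),\rho(N)$ normalize $\GG$.

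The case ``both $A$'' is immediate. If $\ker(\ff)=A$, then $\ff(\sigma)=\mathrm{Id}_N$ for every $\sigma\in A$, so $\fh(\sigma)=\conj(\fg(\sigma))$ for all such $\sigma$ by (\ref{h def}). Since $\fg(A)=A$ and $A$ is non-abelian, $\fg(A)=A\not\subseteq Z(N)$, whence $\conj(\fg(\sigma))\neq\mathrm{Id}_N$ for some $\sigma\in A$; thus $\fh|_A$ is non-trivial and $\ker(\fh)\neq A$. So $\ker(\ff)$ and $\ker(\fh)$ cannot both be $A$.

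For ``both trivial'', suppose $\ker(\ff)=\ker(\fh)=1$. The idea is to restrict everything to the characteristic subgroup $A$ and land inside $\Hol(A)$. Writing $r\colon\Aut(N)\to\Aut(A)$ for restriction, set $\ff'=r\circ(\ff|_A)$; since $\fg(A)=A$ and $A$ is $\ff(\sigma)$-invariant, the identity (\ref{g prop}) shows $\fg|_A$ is a bijective crossed homomorphism in $Z^1_{\ff'}(A,A)$. By Proposition~\ref{fg prop} applied to $A$, the set $\mathcal{D}'=\{\rho_A(\fg(\sigma))\,\ff'(\sigma):\sigma\in A\}$ is a regular subgroup of $\Hol(A)$ isomorphic to $A$, and a direct computation identifies its associated homomorphism from Proposition~\ref{h prop} as $\fh'=r\circ(\fh|_A)$.

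The crux is then the classification of such subgroups for a simple group. First I would check that $\ff'$ and $\fh'$ are both non-trivial: their kernels are normal in the simple group $A$, and if, say, $\ker(\ff')=A$, then $\ff(A)$ would lie in $\ker(r)$, which is trivial when $N$ is almost simple (Lemma~\ref{AS lem}) and cyclic of order $p-1$ when $N\simeq A\times C_p$ (Lemma~\ref{N lem}); since $\ff|_A$ is injective while a cyclic (or trivial) group contains no non-abelian simple subgroup, this is impossible, so $\ff'$ is injective, and likewise $\fh'$. Now by \cite{Byott simple} (equivalently, $e(A,A)=2$), the only regular subgroups of $\Hol(A)$ isomorphic to the non-abelian simple group $A$ are $\lambda(A)$ and $\rho(A)$; but $\mathcal{D}'=\rho(A)$ forces $\ff'$ trivial and $\mathcal{D}'=\lambda(A)$ forces $\fh'$ trivial, by uniqueness of the decompositions $\Hol(A)=\rho(A)\rtimes\Aut(A)=\lambda(A)\rtimes\Aut(A)$, contradicting the previous step. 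Hence $\ker(\ff)$ and $\ker(\fh)$ cannot both be trivial. The main obstacle is exactly this final invocation: unlike the ``both $A$'' case, ruling out ``both trivial'' genuinely requires the deep input $e(A,A)=2$, since the restriction data on $A$ is otherwise entirely consistent with both $\ff'$ and $\fh'$ being non-trivial.
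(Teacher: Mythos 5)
Your proof is correct. For the ``both $A$'' half you and the paper make the same observation in different words: the paper notes that $\ker(\ff)\cap\ker(\fh)$ consists of fixed points of $(\ff,\fh)$, which by Proposition~\ref{h prop}(b) form the set $\fg^{-1}(Z(N))$ of size $|Z(N)|\le p<|A|$; your remark that $\fh|_A=\conj\circ(\fg|_A)$ cannot be trivial because $\fg(A)=A\not\subseteq Z(N)$ is the same computation. The ``both trivial'' half is where you genuinely diverge. The paper stays inside $\Aut(A)$: it uses Lemma~\ref{N lem}(c) or Lemma~\ref{AS lem}(b) to view $\ff(A)$ and $\fh(A)$ as subgroups of $\Aut(A)$ isomorphic to $A$, concludes $\ff(A)=\fh(A)=\Inn(A)$ from Lemma~\ref{CFSG lem}(c), and then observes that $\res(\ff)^{-1}\circ\res(\fh)\in\Aut(A)$ has fixed-point set $\fg^{-1}(Z(N))\cap A=1$ (using $\fg(A)=A$), contradicting Lemma~\ref{CFSG lem}(b). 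You instead repackage the restricted data $(\ff',\fg|_A)$ as a regular subgroup of $\Hol(A)$ isomorphic to $A$ and quote Byott's theorem that $\lambda(A)$ and $\rho(A)$ are the only such subgroups. That is a valid shortcut --- your verification that $\fg|_A\in Z^1_{\ff'}(A,A)$ (using that $A$ is characteristic in $N$) and that $\ff'$ and $\fh'$ are forced to be injective is all that is needed --- but it is essentially the same argument one abstraction level up, since \cite{Byott simple} proves $e(A,A)=2$ precisely via the two consequences of the classification that the paper invokes directly. What the paper's inline version buys is that it makes visible exactly which properties of $A$ are used and reuses the identity $\fg(A)=A$ already established; what yours buys is modularity and a shorter write-up.
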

\begin{proof}Recall from Proposition~\ref{h prop}(b) that $\fg^{-1}(Z(N))$, which has size $|Z(N)|$ because $\fg$ is bijective, is precisely the set of fixed points of $(\ff,\fh)$. We have
\[ |Z(N)| = \begin{cases}
p &\mbox{if $N\simeq A\times C_p$},\\ 1 &\mbox{if $N$ is almost simple with socle $A$},
\end{cases}\]
where the latter holds by Lemma~\ref{AS lem}(a). Then, clearly $\ker(\ff)$ and $\ker(\fh)$ are not both $A$, because elements of $\ker(\ff)\cap\ker(\fh)$ are fixed points of $(\ff,\fh)$.

\vspace{1mm}

Suppose for contradiction that both $\ff$ and $\fh$ are injective. If $N\simeq A\times C_p$, then in the notation of Lemma~\ref{N lem}(c), both $\ff(A),\fh(A)\simeq A$ project trivially onto $\Aut(C_p)\simeq C_{p-1}$, whence they lie in $\Aut(A)$. If $N$ is almost simple with socle $A$, then $\Aut(N)$ embeds into $\Aut(A)$ by Lemma~\ref{AS lem}(b). In both cases, we deduce from Lemma~\ref{CFSG lem}(c) that $\ff(A)=\fh(A)$, which we shall denote by $\mathfrak{A}$. Then, via restriction $\ff$ and $\fh$ induce isomorphisms
\[ \res(\ff),\res(\fh): A\longrightarrow \mathfrak{A},\mbox{ and } \res(\ff)^{-1}\circ \res(\fh) \in \Aut(A).\]
The set of fixed points of $\res(\ff)^{-1}\circ\res(\fh)$ is equal to $\fg^{-1}(Z(N))\cap A$, which is trivial because $\fg(A)=A$. This contradicts Lemma~\ref{CFSG lem}(b).
\end{proof}

\begin{proof}[Proof of Proposition~$\ref{1st prop}$]
To summarize, we have shown:
\begin{enumerate}[$\bullet$]
\item If $\ker(\fh)=1$ and $\ker(\ff)=A$, then $\GG$ is normalized by $\rho(N)$ but not $\lambda(N)$.
\item If $\ker(\ff)=1$ and $\ker(\fh)=A$, then $\GG$ is normalized by $\lambda(N)$ but not $\rho(N)$.
\end{enumerate}
Moreover, these are the only possibilities, and so the claim follows.
\end{proof}

\subsection{An alternative formula} Let us now prove:

\begin{prop}\label{formula prop}We have
 \[ e(G,N) = 2\cdot\#\left\lbrace\begin{array}{c}\mbox{regular subgroups of $\Hol(G)$ other than $\lambda(G)$}\\\mbox{which are isomorphic to $N$ and normalized by $\lambda(G)$}\end{array}\right\rbrace.\]
 \end{prop}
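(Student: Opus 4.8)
The plan is to exploit the symmetry established in Proposition~\ref{1st prop} together with Byott's formula~(\ref{B formula}) to rewrite the count of regular subgroups of $\Hol(N)$ isomorphic to $G$ as twice a count of regular subgroups of $\Hol(G)$ isomorphic to $N$. The starting point is~(\ref{B formula}), which gives
\[ e(G,N) = \frac{|\Aut(G)|}{|\Aut(N)|}\cdot R(N,G), \]
where $R(N,G)$ denotes the number of regular subgroups of $\Hol(N)$ isomorphic to $G$. By the same formula applied with the roles of $G$ and $N$ interchanged, the count appearing on the right-hand side of the proposition is $\frac{|\Aut(N)|}{|\Aut(G)|}$ times the number of regular subgroups of $\Hol(N)$ isomorphic to $G$ other than $\lambda(N)$ and $\rho(N)$. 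So matching the two sides reduces to a purely combinatorial identity about how these regular subgroups are partitioned.

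First I would set up the bookkeeping carefully. Write $R$ for the set of regular subgroups of $\Hol(N)$ isomorphic to $G$, and isolate the two distinguished members $\lambda(N)$ and $\rho(N)$ (note that $G\simeq N$ is only possible here when $N$ is almost simple with socle $A$, but in any case $\lambda(N)$ and $\rho(N)$ are isomorphic to $N$, so whether they lie in $R$ requires $G\simeq N$; I would treat the general bookkeeping so that the argument is uniform). For every $\GG\in R$ not equal to $\lambda(N)$ or $\rho(N)$, Proposition~\ref{1st prop} tells us that $\GG$ is normalized by exactly one of $\lambda(N)$ and $\rho(N)$. The next step is to translate ``normalized by $\rho(N)$'' into membership in $\Hol(N)$ via~(\ref{Hol2}): since $\Norm(\rho(N))=\Hol(N)=\Norm(\lambda(N))$, being normalized by $\rho(N)$ means lying inside $\Norm(\rho(N))=\Hol(N)$, which is automatic, so the operative content is being normalized as a subgroup, i.e. $\rho(N)$ normalizes $\GG$. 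I would then show that the subgroups $\GG$ normalized by $\rho(N)$ correspond, via the conjugation/inversion symmetry relating $\lambda$ and $\rho$ (the anti-automorphism of $\Perm(N)$ coming from $\eta\mapsto\eta^{-1}$, which swaps $\lambda(N)\leftrightarrow\rho(N)$ and fixes $\Hol(N)$ setwise), bijectively to those normalized by $\lambda(N)$. This is where the factor of $2$ comes from: the two halves of the partition from Proposition~\ref{1st prop} are interchanged by this involution and hence have equal cardinality.

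The heart of the matter is then to identify the subgroups normalized by $\lambda(N)$ with regular subgroups of $\Hol(G)$ isomorphic to $N$. The key observation is that a regular subgroup $\GG\simeq G$ of $\Hol(N)=\Norm(\lambda(N))$ being normalized by $\lambda(N)$ is a symmetric condition in $\GG$ and $\lambda(N)$: both are regular subgroups of $\Perm(N)$, each normalizing the other. Here the regularity is essential, since it lets me transport the whole configuration from $\Perm(N)$ to $\Perm(G)$ through the bijection $\xi_{\GG}:\GG\to G$ of~(\ref{xi map}). Concretely, I would use the regularity of $\GG$ to realize $\GG$ as a ``$\lambda$-type'' copy inside $\Perm(G)$ and $\lambda(N)$ as the corresponding regular subgroup isomorphic to $N$ that normalizes it; this is precisely the mutual-normalization duality underlying~(\ref{B formula}), and I expect this transport step to be the main obstacle, as one must check it is a genuine bijection (compatible with the isomorphism types and the regularity) rather than merely a correspondence. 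Once the bijection between ``$\GG\simeq G$ in $\Hol(N)$ normalized by $\lambda(N)$'' and ``regular subgroups of $\Hol(G)$ isomorphic to $N$ other than $\lambda(G)$'' is in place, combining it with the factor of $2$ from the $\lambda$–$\rho$ involution and with~(\ref{B formula}) on both sides yields the stated formula after the automorphism-order factors cancel. I would finish by confirming that the excluded subgroups $\lambda(N),\rho(N)$ on the $\Hol(N)$ side correspond exactly to the excluded $\lambda(G)$ on the $\Hol(G)$ side, so that no stray terms remain.
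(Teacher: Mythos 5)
Your overall strategy --- invoke Proposition~\ref{1st prop}, extract a factor of $2$ from a $\lambda$--$\rho$ symmetry, and pass between $\Hol(N)$ and $\Hol(G)$ by transporting a mutually normalizing pair of regular subgroups through the regularity bijection --- contains the right ingredients, and your transport step is essentially the paper's Lemma~\ref{reverse lem}. But the step you yourself flag as the main obstacle is a genuine gap, and it fails in the form you propose. The correspondence between $\{\GG\le\Hol(N):\GG\simeq G,\ \GG\mbox{ normalized by }\lambda(N)\}$ and $\{\mathcal{N}\le\Hol(G):\mathcal{N}\simeq N,\ \mathcal{N}\mbox{ normalized by }\lambda(G)\}$ is \emph{not} a bijection: to send $\GG$ into $\Perm(G)$ you must choose an isomorphism $G\to\GG$, different choices yield $\Aut(G)$-conjugate images, and the two counts differ by the factor $|\Aut(G)|/|\Aut(N)|$ exactly as in Byott's formula. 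These orders genuinely differ here (e.g.\ $|\Aut(A\times C_p)|=(p-1)|\Aut(A)|$ while $\Aut(G)$ embeds into $\Aut(A)$), so you cannot hope the factor is trivially $1$. Your opening reduction is also incorrect: (\ref{B formula}) with the roles of $G$ and $N$ interchanged computes $e(N,G)$ from \emph{all} regular subgroups of $\Hol(G)$ isomorphic to $N$, whereas the right-hand side of the proposition imposes the additional condition ``normalized by $\lambda(G)$,'' which (\ref{B formula}) as stated does not see; moreover the factor you write down is the reciprocal of the one that would be needed. To push your route through you would have to prove a \emph{refined} Byott correspondence in which ``$\GG$ normalized by $\lambda(N)$'' on one side matches ``$\mathcal{N}\subset\Hol(G)$'' on the other with the same $|\Aut(G)|$ and $|\Aut(N)|$ weights, and then verify that these weights cancel against those already present in (\ref{B formula}); none of this is carried out.

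The paper sidesteps the automorphism bookkeeping entirely by never leaving $\Perm(G)$. It pairs each regular subgroup $\mathcal{N}$ counted in (\ref{e count}) with its centralizer $\mathcal{N}^\star$ in $\Perm(G)$: since $N$ is non-abelian, $\mathcal{N}\neq\mathcal{N}^\star$, and $\mathcal{N}^\star$ is again regular, isomorphic to $N$, and normalized by $\lambda(G)$, so the subgroups counted by $e(G,N)$ come in pairs. The transport you describe is then used only to show, via Proposition~\ref{1st prop}, that exactly one member of each pair lies in $\Hol(G)$. This produces the factor of $2$ directly at the level of the count defining $e(G,N)$, with no $\Aut$ factors to cancel. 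The missing idea in your proposal is precisely this centralizer pairing $\mathcal{N}\mapsto\mathcal{N}^\star$; it is the $\Perm(G)$-side incarnation of your inversion involution, and applying it on that side is what makes the count close up cleanly.
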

 
We shall prove this using (\ref{e count}) directly. Given a subgroup $\mathcal{N}$ of $\Perm(G)$, denote by $\mathcal{N}^\star$ its centralizer in $\Perm(G)$. In the case that $\mathcal{N}$ is regular:
\begin{enumerate}[$\bullet$]
\item $\mathcal{N}^\star\simeq \mathcal{N}$ and $(\mathcal{N}^\star)^\star = \mathcal{N}$;
\item $\mathcal{N}^\star$ is also regular;
\item $\mathcal{N} = \mathcal{N}^\star$ if and only if $\mathcal{N}$ is abelian;
\item $\mathcal{N}$ is normalized by $\lambda(G)$ if and only $\mathcal{N}^\star$ is normalized by $\lambda(G)$.
\end{enumerate}
These facts are all easy to prove; see \cite[Lemmas 2.1 and 2.3]{Truman}, for example. Since $N$ is non-abelian, we see that the regular subgroups of $\Perm(G)$ which are isomorphic to $N$ and normalized by $\lambda(G)$ come in pairs.

\begin{lem}\label{reverse lem} Let $\mathcal{N}$ be any regular subgroup of $\Perm(G)$ which is isomorphic to $N$ and normalized by $\lambda(G)$. If $\mathcal{N}$ is not equal to $\lambda(G)$ or $\rho(G)$, then exactly one of $\mathcal{N}$ and $\mathcal{N}^\star$ lies in $\Hol(G)$.
\end{lem}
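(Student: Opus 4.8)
The plan is to reduce the statement to Proposition~\ref{1st prop} via the standard translation between regular subgroups of $\Perm(G)$ normalized by $\lambda(G)$ and regular subgroups of $\Hol(N)$ isomorphic to $G$.

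First I would use the regularity of $\mathcal{N}$ to transport the ambient permutation group. The bijection $\xi_{\mathcal{N}}:\mathcal{N}\to G$, $\delta\mapsto\delta(1_G)$, induces an isomorphism $\theta:\Perm(G)\to\Perm(\mathcal{N})$ given by $\theta(\pi)=\xi_{\mathcal{N}}^{-1}\circ\pi\circ\xi_{\mathcal{N}}$. A direct check shows that $\theta$ carries $\mathcal{N}$ to its own left regular representation $\lambda(\mathcal{N})$, hence carries the centralizer $\mathcal{N}^\star$ to $\rho(\mathcal{N})$ and the normalizer $\Norm(\mathcal{N})$ to $\Hol(\mathcal{N})$. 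Identifying $\mathcal{N}\simeq N$, this realizes $\Norm(\mathcal{N})$ as $\Hol(N)$ with $\mathcal{N}\mapsto\lambda(N)$ and $\mathcal{N}^\star\mapsto\rho(N)$. Since $\mathcal{N}$ is normalized by $\lambda(G)$ by hypothesis, we have $\lambda(G)\leq\Norm(\mathcal{N})$, so $\theta$ sends $\lambda(G)$ to a subgroup $\GG\leq\Hol(N)$; as $\lambda(G)$ is regular on $G$ and isomorphic to $G$, the image $\GG$ is a regular subgroup of $\Hol(N)$ isomorphic to $G$, precisely the kind of object to which Proposition~\ref{1st prop} applies.

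Next I would translate the two membership questions through $\theta$. The key point is that $\mathcal{N}\leq\Hol(G)=\Norm(\lambda(G))$ holds if and only if $\mathcal{N}$ normalizes $\lambda(G)$; since both $\mathcal{N}$ and $\lambda(G)$ lie in $\Norm(\mathcal{N})$ and $\theta$ preserves normalization, this is equivalent to $\lambda(N)$ normalizing $\GG$, i.e.\ to $\GG$ being normalized by $\lambda(N)$. The same reasoning, using the fact recorded before the lemma that $\mathcal{N}^\star$ is normalized by $\lambda(G)$ whenever $\mathcal{N}$ is, shows that $\mathcal{N}^\star\leq\Hol(G)$ if and only if $\GG$ is normalized by $\rho(N)$. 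I would also translate the excluded cases: $\mathcal{N}=\lambda(G)$ corresponds to $\GG=\lambda(N)$, while $\mathcal{N}=\rho(G)=\lambda(G)^\star$ corresponds, via $(\mathcal{N}^\star)^\star=\mathcal{N}$, to $\mathcal{N}^\star=\lambda(G)$ and hence to $\GG=\rho(N)$. Thus the assumption $\mathcal{N}\neq\lambda(G),\rho(G)$ becomes exactly $\GG\neq\lambda(N),\rho(N)$, so Proposition~\ref{1st prop} yields that $\GG$ is normalized by exactly one of $\lambda(N)$ and $\rho(N)$; translating back, exactly one of $\mathcal{N}$ and $\mathcal{N}^\star$ lies in $\Hol(G)$.

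The genuine content sits in Proposition~\ref{1st prop}, and the step I expect to need the most care is verifying the correspondences of the previous paragraph: that $\theta$ identifies $\mathcal{N}^\star$ with $\rho(N)$ rather than $\lambda(N)$, that ``lying in $\Hol(G)$'' really is the same as ``normalizing $\lambda(G)$'' for subgroups already sitting inside $\Norm(\mathcal{N})$, and that the excluded cases $\lambda(G),\rho(G)$ match up with the excluded cases $\lambda(N),\rho(N)$. Once these identifications are pinned down, the conclusion is immediate, since the dichotomy ``exactly one of two subgroups normalizes $\GG$'' transfers verbatim to ``exactly one of $\mathcal{N},\mathcal{N}^\star$ lies in $\Hol(G)$''.
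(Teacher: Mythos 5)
Your proposal is correct and follows essentially the same route as the paper: transport everything through the conjugation isomorphism induced by $\xi_{\mathcal{N}}$ (you use the inverse direction of the paper's $\Xi_{\mathcal{N}}$, which is immaterial), identify $\mathcal{N},\mathcal{N}^\star$ with $\lambda(\mathcal{N}),\rho(\mathcal{N})$ and $\lambda(G)$ with a regular subgroup $\GG\leq\Hol(\mathcal{N})$ isomorphic to $G$, match up the excluded cases, and invoke Proposition~\ref{1st prop} together with $\Hol(G)=\Norm(\lambda(G))$. All the identifications you flag as needing care are exactly the ones the paper records, and they check out.
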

\begin{proof}The bijection $\xi_{\mathcal{N}}$ as in the introduction induces an isomorphism
\[ \Xi_{\mathcal{N}}: \Perm(\mathcal{N})\longrightarrow \Perm(G);\hspace{1em}\Xi_{\mathcal{N}}(\pi) = \xi_{\mathcal{N}}\circ\pi\circ\xi_{\mathcal{N}}^{-1}\]
under which $\lambda(\mathcal{N})$ is sent to $\mathcal{N}$. Notice that $\rho(\mathcal{N})$ is the centralizer of $\lambda(\mathcal{N})$ in $\Perm(\mathcal{N})$ and so is sent to $\mathcal{N}^\star$. Let $\GG$ denote the preimage of $\lambda(G)$ under $\Xi_\mathcal{N}$, which is a regular subgroup of $\Perm(\mathcal{N})$ isomorphic to $G$. In summary:
\[\Xi_\mathcal{N}:\hspace{1em} \lambda(\mathcal{N}) \mapsto \mathcal{N},\,\ \rho(\mathcal{N})\mapsto \mathcal{N}^\star,\,\ \GG\mapsto \lambda(G).\]
Recall that $\Hol(\mathcal{N})$ is the normalizer of $\lambda(\mathcal{N})$ in $\Perm(\mathcal{N})$. Since $\lambda(G)$ normalizes $\mathcal{N}$, we see that $\GG$ lies in $\Hol(\mathcal{N})$. Similarly, we have
\begin{align*}
 \mathcal{N}\mbox{ normalizes }\lambda(G) &\iff \lambda(\mathcal{N}) \mbox{ normalizes }\GG,\\
  \mathcal{N}^\star\mbox{ normalizes }\lambda(G) &\iff \rho(\mathcal{N}) \mbox{ normalizes }\GG.
\end{align*}
If $\mathcal{N}$ is not equal to $\lambda(G)$ or $\rho(G)$, then $\mathcal{G}$ is not equal to $\lambda(\mathcal{N})$ or $\rho(\mathcal{N})$, and the above together with Proposition~\ref{1st prop} show that exactly one of $\mathcal{N}$ and $\mathcal{N}^\star$ normalizes $\lambda(G)$. The claim now follows.
\end{proof}

\begin{proof}[Proof of Proposition~$\ref{formula prop}$] Define
\begin{align*}
 \kappa(N) & = \#\left(\{\lambda(G),\rho(G)\}\cap\{\mbox{groups isomorphic to $N$\}}\right) \\
 & = \begin{cases} 2 & \mbox{if $N\simeq G$},\\ 0 &\mbox{if $N\not\simeq G$}.\end{cases}\end{align*}
By Lemma~\ref{reverse lem}, the number $e(G,N)$ in (\ref{e count}) is equal to 
 \[ \kappa(N) +  2\cdot\#\left\lbrace\begin{array}{c}\mbox{regular subgroups of $\Hol(G)$ other than $\lambda(G),\rho(G)$}\\\mbox{which are isomorphic to $N$ and normalized by $\lambda(G)$}\end{array}\right\rbrace .\]
The claim is then clear.
\end{proof}

\section{The case when $N= A\times C_p$}\label{direct prod sec}

In this section, assume that $N=A\times C_p$, and fix a generator $\ep$ of $C_p$. We shall apply Proposition~\ref{formula prop} to prove Theorem~\ref{thm1}. Let us define 
\[\mathrm{InHol}(G) = \rho(G)\rtimes\Inn(G)\]
to be the \emph{inner holomorph} of $G$, which is a subgroup of $\Hol(G)$.

\begin{lem}\label{InHol lem} A regular subgroup of $\Hol(G)$ isomorphic to $N$ lies in $\mathrm{InHol}(G)$.
\end{lem}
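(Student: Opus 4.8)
The plan is to write $\mathcal{N}$ in the canonical form provided by Proposition~\ref{fg prop}, say
\[ \mathcal{N} = \{\rho(\fg(\nu))\cdot\ff(\nu):\nu\in N\},\quad \ff\in\Hom(N,\Aut(G)),\ \fg\in Z_\ff^1(N,G)\text{ bijective},\]
and to reduce the claim to showing that $\ff(N)\subseteq\Inn(G)$, since then every generator $\rho(\fg(\nu))\ff(\nu)$ already lies in $\rho(G)\rtimes\Inn(G)=\mathrm{InHol}(G)$. I would simultaneously carry along the homomorphism $\fh$ from Proposition~\ref{h prop}, recording three facts to be used throughout: $Z(G)=1$ and $\Aut(G)$ embeds into $\Aut(A)$ by restriction (Lemma~\ref{AS lem}), and $(\ff,\fh)$ is fixed-point-free because its fixed-point set is $\fg^{-1}(Z(G))=\{1_N\}$ by Proposition~\ref{h prop}(b); in particular $\ker(\ff)\cap\ker(\fh)=1$.

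Fix a generator $\ep$ of $C_p$, so that $N=\langle A,\ep\rangle$ with $\ep$ central. It thus suffices to show $\ff(A)\subseteq\Inn(G)$ and $\ff(\ep)\in\Inn(G)$. For the first, since $A$ is simple each of $\ff|_A$ and $\fh|_A$ is either trivial or injective, and they cannot both be trivial, as $A\neq1$ but $\ker(\ff)\cap\ker(\fh)=1$. If $\ff|_A$ is injective, then $\ff(A)$ is a copy of $A$ inside $\Aut(G)$; restricting to $A$ (which sends $\conj(A)$ onto $\Inn(A)$) and invoking Lemma~\ref{CFSG lem}(c), namely that the only copy of $A$ in $\Aut(A)$ is $\Inn(A)$, together with the injectivity of restriction forces $\ff(A)=\conj(A)\subseteq\Inn(G)$; the same applies to $\fh$. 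When $\ff|_A$ is trivial, $\ff(A)=1$ trivially. Either way $\ff(A)\subseteq\Inn(G)$.

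The crux is $\ff(\ep)\in\Inn(G)$, and here I would exploit that $\ep$ is central: $\ff(\ep)$ commutes with $\ff(A)$ and $\fh(\ep)$ commutes with $\fh(A)$. The key computation is that $\Cent_{\Aut(G)}(\conj(A))=1$, since any automorphism commuting with every $\conj(a)$ must fix $A$ pointwise (using $Z(G)=1$) and hence is trivial by the injectivity of restriction to $A$. Now, by the previous paragraph, whichever of $\ff|_A,\fh|_A$ is injective has image $\conj(A)$. If $\ff|_A$ is injective, then $\ff(\ep)\in\Cent_{\Aut(G)}(\conj(A))=1$. If instead $\fh|_A$ is injective, then $\fh(\ep)\in\Cent_{\Aut(G)}(\conj(A))=1$, so from $\fh(\ep)=\conj(\fg(\ep))\cdot\ff(\ep)$ we obtain $\ff(\ep)=\conj(\fg(\ep))^{-1}\in\Inn(G)$. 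In both cases $\ff(\ep)\in\Inn(G)$, which finishes the proof. I expect the main obstacle to be precisely this last point: the $A$-part lands in $\Inn(G)$ for soft reasons, but controlling the central generator $\ep$ requires playing $\ff$ against its twin $\fh$ and using the triviality of the centralizer of $\conj(A)$.
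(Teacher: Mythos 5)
Your argument is correct and follows essentially the same route as the paper: reduce to showing $\ff(N)\subseteq\Inn(G)$, note that at least one of $\ff,\fh$ is injective on $A$ by fixed-point-freeness, identify its image with $\conj(A)$ via Lemmas~\ref{AS lem}(b) and~\ref{CFSG lem}(c), and kill the image of the central generator $\ep$ by the computation showing $\Cent_{\Aut(G)}(\conj(A))=1$. The only cosmetic difference is that the paper dispatches the case where $\fh|_A$ is the injective one by the equivalence $\ff(N)\subseteq\Inn(G)\iff\fh(N)\subseteq\Inn(G)$, whereas you transfer $\fh(\ep)=1$ back to $\ff(\ep)=\conj(\fg(\ep))^{-1}$ explicitly; both are fine.
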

\begin{proof}Let $\mathcal{N}$ be a regular subgroup of $\Hol(G)$ isomorphic to $N$. Write
\[ \mathcal{N} = \{\rho(\fg(\eta))\cdot\ff(\eta):\eta\in N\},\mbox{ where }\begin{cases}
\ff\in\Hom(N,\Aut(G))\\\fg\in Z_\ff^1(N,G)\mbox{ is bijective}
\end{cases}\]
as in Proposition~\ref{fg prop}, and let $\fh\in\Hom(N,\Aut(G))$ be as in (\ref{h def}). We have
\[ \mathcal{N} \subset \mathrm{InHol}(G) \iff \ff(N) \subset \Inn(G) \iff \fh(N)\subset \Inn(G).\]
Since $G$ has trivial center by Lemma~\ref{AS lem}(a), the pair $(\ff,\fh)$ is fixed point free by Proposition~\ref{h prop}(b). It follows that $A$ cannot lie in both $\ker(\ff)$ and $\ker(\fh)$, so at least one of $\ff$ and $\fh$ is injective on $A$.

\vspace{1mm}

Without loss of generality, let us assume that $\ff$ is injective on $A$. Then, by Lemmas~\ref{AS lem}(b) and~\ref{CFSG lem}(c), we see that $\ff(A)\simeq A $ is the subgroup of $\Inn(G)$ consisting of the inner automorphisms
\[ \conj(\sigma)\in\Inn(G);\hspace{1em}\conj(\sigma)(x) = \sigma x\sigma^{-1}\hspace{1em}\mbox{ for }\sigma\in A.\]
Put $\theta = \ff(\ep)$, which commutes with $\ff(A)$. But then $\sigma^{-1}\theta(\sigma)$ lies in the center of $G$ for all $\sigma\in A$ because for any $x\in G$, we have
\[ \sigma\theta(x)\sigma^{-1} = (\conj(\sigma)\circ\theta)(x) =
(\theta\circ\conj(\sigma))(x) = \theta(\sigma)\theta(x)\theta(\sigma)^{-1}.\]
Since $G$ has trivial center, we deduce that $\theta|_A = \mathrm{Id}_A$, and so in fact $\theta=\mbox{Id}_G$ by Lemma~\ref{AS lem}(b). This proves $\ff(N) = \ff(A)$, whence the claim. 
\end{proof}

Now, since $G$ has trivial center, the regular subgroups of $\mathrm{InHol}(G)$ isomorphic to $N$ are precisely the subgroups of the shape
\[ \mathcal{N}_{(f,h)} = \{\rho(h(\eta))\cdot\lambda(f(\eta)):\eta\in N\}\]
as $f,h$ range over $\Hom(N,G)$ with $(f,h)$ fixed point free, by \cite[Proposition 6]{Byott Childs} or \cite[Subsection 2.3.1]{Tsang HG}. Moreover, each $\mathcal{N}$ correspond to exactly $|\Aut(N)|$ pairs of $(f,h)$. By Proposition~\ref{formula prop} and Lemma~\ref{InHol lem}, we then see that
\[ e(G,N) = 2\cdot\frac{1}{|\Aut(N)|}\cdot\#\left\lbrace\begin{array}{c}\mbox{fixed point free $(f,h)$ for $f,h\in\Hom(N,G)$}\\\mbox{such that $\mathcal{N}_{(f,h)}$ is normalized by $\lambda(G)$}\end{array}\right\rbrace.\]
In what follows, let $f,h\in\Hom(N,G)$. Note that both $\ker(f)$ and $\ker(h)$ are non-trivial because $N$ is not isomorphic to $G$. For the pair $(f,h)$ to be fixed point free, the subgroups $\ker(f)$ and $\ker(h)$ must intersect trivially, whence by Lemma~\ref{N lem}(a), exactly one of them is $A$ and the other is $C_p$. Also, notice that by Lemma~\ref{CFSG lem}(c), we must have $h(A)=A$ if $\ker(h)=C_p$ and similarly $f(A)=A$ if $\ker(f)=C_p$.

\begin{lem}\label{normalize lem}Let $\mathcal{N} = \mathcal{N}_{(f,h)}$ be as above.
\begin{enumerate}[(a)]
\item If $\ker(h) = C_p$ and $\ker(f) = A$, then $\mathcal{N}$ is not normalized by $\lambda(G)$.
\item If $\ker(f) = C_p$ and $\ker(h) = A$, then $\mathcal{N}$ is normalized by $\lambda(G)$.
\end{enumerate}
\end{lem}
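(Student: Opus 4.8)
The plan is to turn "normalized by $\lambda(G)$" into an explicit conjugation condition and then read off the two cases from the normal structure of $G$. First I would record two elementary facts in $\Perm(G)$: that $\lambda(G)$ and $\rho(G)$ commute elementwise, and that since $G$ has trivial center (Lemma~\ref{AS lem}(a)) the identity $\rho(a)\lambda(b)=\rho(a')\lambda(b')$ forces $a=a'$ and $b=b'$. In particular the map $\eta\mapsto\rho(h(\eta))\lambda(f(\eta))$ is injective, so an element lies in $\mathcal{N}$ exactly when both of its $\rho$- and $\lambda$-parts arise from a single $\eta'\in N$. Conjugating a generic member of $\mathcal{N}$ by $\lambda(\tau)$ and moving $\lambda(\tau)^{\pm1}$ past $\rho(h(\eta))$ gives
\[ \lambda(\tau)\cdot\rho(h(\eta))\lambda(f(\eta))\cdot\lambda(\tau)^{-1}=\rho(h(\eta))\cdot\lambda(\tau f(\eta)\tau^{-1}).\]
Hence $\lambda(\tau)$ normalizes $\mathcal{N}$ precisely when, for every $\eta\in N$, there is some $\eta'\in N$ with $h(\eta')=h(\eta)$ and $f(\eta')=\tau f(\eta)\tau^{-1}$.

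For part (b) I would use that $\ker(f)=C_p$ makes $f(\ep)=1_G$ and $f|_A$ a surjection onto $A$, while $\ker(h)=A$ makes $h|_{C_p}$ injective with $h(A)=1_G$. Writing $\eta=\alpha\ep^k$, the requirement $h(\eta')=h(\eta)$ forces the $C_p$-component of $\eta'$ to be $\ep^k$, leaving only the $A$-part to be matched. Since $A$ is normal in $G$ we have $\tau f(\eta)\tau^{-1}=\tau f(\alpha)\tau^{-1}\in A=f(A)$, and surjectivity of $f|_A$ produces $\alpha'\in A$ with $f(\alpha')=\tau f(\alpha)\tau^{-1}$; then $\eta'=\alpha'\ep^k$ works for every $\tau$ and every $\eta$, so $\lambda(G)$ normalizes $\mathcal{N}$.

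For part (a) the roles swap: $\ker(f)=A$ gives $f(A)=1_G$ and makes $f|_{C_p}$ injective, so $f(N)=\langle f(\ep)\rangle$ is a subgroup of order $p$, while $\ker(h)=C_p$ makes $h|_A$ injective. Taking $\eta=\ep$, the condition $h(\eta')=h(\ep)=1_G$ forces $\eta'\in\ker(h)=C_p$, whence $f(\eta')\in\langle f(\ep)\rangle$; so normalization by $\lambda(\tau)$ for all $\tau\in G$ would require $\tau f(\ep)\tau^{-1}\in\langle f(\ep)\rangle$ for all $\tau$, i.e. that $\langle f(\ep)\rangle$ be normal in $G$. But by (\ref{normal sub}) the only nontrivial proper normal subgroup of $G$ is $A$, which is non-abelian of order greater than $p$, so the order-$p$ subgroup $\langle f(\ep)\rangle$ is not normal and some $\tau$ conjugates $f(\ep)$ out of it. Thus $\lambda(G)$ does not normalize $\mathcal{N}$.

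The step I expect to require the most care is the bookkeeping that pins $\eta'$ down uniquely and identifies which factor of $N=A\times C_p$ its two components must come from; once the criterion is phrased as "$h(\eta')=h(\eta)$ and $f(\eta')=\tau f(\eta)\tau^{-1}$", the asymmetry between the cases is clear — part (b) succeeds exactly because $f(A)=A$ is $G$-conjugation invariant, whereas part (a) fails exactly because an order-$p$ subgroup of $G$ can never be normal.
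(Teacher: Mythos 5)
Your proof is correct and takes essentially the same route as the paper: the identical conjugation identity $\lambda(\sigma)\cdot\rho(h(\eta))\lambda(f(\eta))\cdot\lambda(\sigma)^{-1}=\rho(h(\eta))\cdot\lambda(\sigma f(\eta)\sigma^{-1})$, the same use of the triviality of $Z(G)$ to separate the $\rho$- and $\lambda$-parts, and the same dichotomy that normalization forces the set of $\lambda$-parts $f(N)$ to be normal in $G$ --- which fails in case (a) since $f(N)=\langle f(\ep)\rangle$ has order $p$ and so cannot be $A$ by (\ref{normal sub}), and succeeds in case (b) since $f(N)=f(A)=A$ is normal and $f|_A$ is onto $A$. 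No changes needed.
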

\begin{proof}For any $\eta\in N$ and $\sigma\in G$, we have
\[\lambda(\sigma)\cdot \rho(h(\eta))\lambda(f(\eta))\cdot\lambda(\sigma)^{-1} = \rho(h(\eta))\cdot \lambda(\sigma f(\eta)\sigma^{-1}).\]
Note that $\rho(G)$ and $\lambda(G)$ intersect trivially since $G$ has trivial center. Thus, for $\mathcal{N}$ to be normalized by $\lambda(G)$, the subgroup $f(N)$, which is non-trivial in both parts, is normal in $G$ and in particular contains $A$. This yields (a).

\vspace{1mm}

Now, suppose that $\ker(f)=C_p$ and $\ker(h)=A$. Write $\eta = a\ep^i$ for $a\in A$ and $i\in\mathbb{Z}$. Since $f(A) = A$ and $A$ is normal in $G$, there exists $a_\sigma\in A$ such that $f(a_\sigma) = \sigma f(a)\sigma^{-1}$. It follows that
\[ \rho(h(\eta))\cdot \lambda(\sigma f(\eta)\sigma^{-1}) = \rho(h(\ep^i))\cdot\lambda(\sigma f(a)\sigma^{-1}) = \rho(h(a_\sigma\ep^i))\cdot\lambda(f(a_\sigma\ep^i)),\]
which lies in $\mathcal{N}$. This proves (b).
\end{proof}

\begin{lem}\label{fpf lem}Suppose that $\ker(f)=C_p$ and $\ker(h)=A$. Then $(f,h)$ is fixed point free if and only if $h(\ep)\not\in A$.
\end{lem}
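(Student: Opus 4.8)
The plan is to reduce the fixed-point condition to a single equation in $G$ by exploiting the product structure $N = A\times C_p$ together with the given kernels. Write an arbitrary element of $N$ as $\eta = a\ep^i$ with $a\in A$ and $i\in\mathbb{Z}$. Since $\ker(f) = C_p = \langle\ep\rangle$, we have $f(\eta) = f(a)$; and since $\ker(h) = A$, we have $h(\eta) = h(\ep)^i$. Hence $\eta$ is a fixed point of $(f,h)$ exactly when $f(a) = h(\ep)^i$, and $(f,h)$ is fixed point free precisely when this equation forces $a=1$ and $p\mid i$. I would also record two facts already available from the remark preceding the lemma: $f$ restricts to an isomorphism $A\to f(A)=A$ (since $A\cap C_p=1$ gives injectivity, and $f(A)=A$ by Lemma~\ref{CFSG lem}(c)), and $h(\ep)$ has order $p$ (since $h$ is injective on $C_p$).

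For the direction assuming $h(\ep)\notin A$, I would suppose $f(a) = h(\ep)^i$ and use that $A$ has prime index $p$ in $G$, so that $G/A\simeq C_p$ and the image of $h(\ep)$ generates this quotient. Consequently $h(\ep)^i\in A$ if and only if $p\mid i$. Since $f(a)\in f(A)=A$, the equation forces $h(\ep)^i\in A$, hence $p\mid i$, hence $h(\ep)^i = 1$; therefore $f(a)=1$, and injectivity of $f$ on $A$ gives $a=1$. Thus $\eta = 1_N$, so $(f,h)$ is fixed point free.

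For the converse (in contrapositive form), I would assume $h(\ep)\in A$. Because $f$ restricts to an isomorphism $A\to A$, there exists $a\in A$ with $f(a) = h(\ep)$. Then $\eta = a\ep$ satisfies $f(\eta) = f(a) = h(\ep) = h(\eta)$, and $\eta\neq 1_N$ because $\ep\notin A$ while $a\in A$. This exhibits a non-trivial fixed point, so $(f,h)$ is not fixed point free, completing the equivalence.

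The argument is essentially a direct computation, so I do not anticipate a serious obstacle. The one point demanding care is the dichotomy $h(\ep)^i\in A \iff p\mid i$, which relies on $[G:A]=p$ being \emph{prime} and on $h(\ep)$ genuinely having order $p$; this is exactly what makes both directions hinge cleanly on whether $h(\ep)$ lies in $A$.
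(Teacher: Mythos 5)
Your proof is correct and follows essentially the same route as the paper: the reverse direction (producing the fixed point $a\ep$ when $h(\ep)\in A$) is identical, and your forward direction via $G/A\simeq C_p$ is just a slightly more elementwise phrasing of the paper's observation that $f(N)\cap h(N)$ and $\ker(f)\cap\ker(h)$ are both trivial when $h(\ep)\notin A$.
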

\begin{proof}Again $f(A)=A$. If $h(\ep)\in A$, then $f(a) = h(\ep)$ for $a\in A$, and $a\ep\neq1_N$ is a fixed point of $(f,h)$. If $h(\ep)\notin A$, then $f(N)\cap h(N)$ is trivial, and $(f,h)$ is fixed point free because $\ker(f)\cap \ker(h)$ is also trivial.
\end{proof}

\begin{proof}[Proof of Theorem~$\ref{thm1}$] 
By Lemmas~\ref{normalize lem} and~\ref{fpf lem} we have
 \[ e(G,N) = 2\cdot\frac{1}{|\Aut(N)|}\cdot e_1(G,N)\cdot e_2(G,N),\]
where we define
\begin{align*}
e_1(G,N) & = \#\{f\in\Hom(N,G):\ker(f)=C_p\},\\
e_2(G,N) & = \#\{h\in\Hom(N,G):\ker(h)=A,\, h(\ep)\notin A\}.
\end{align*}
We have $|\Aut(N)|=(p-1)|\Aut(A)|$ by Lemma~\ref{N lem}(c). Also, it is clear that
\[e_2(G,N) = \#\{\sigma\in G\setminus A:\sigma\mbox{ has order $p$}\},\mbox{ and }e_1(G,N) = |\Aut(A)|\]
because $f(A)=A$ whenever $\ker(f) = C_p$. The theorem now follows.\end{proof}

\section{The case when $N$ is non-perfect}\label{case2 sec}

In this section, assume that $N$ is non-perfect and $e(G,N)$ is non-zero. We shall prove Theorem~\ref{thm2}. By (\ref{B formula}) and Proposition~\ref{fg prop}, there exist
\[ \ff\in \Hom(G,\Aut(N))\mbox{ and a bijective }\fg\in Z_\ff^1(G,N).\]
Since $N$ is non-perfect, it has a maximal characteristic subgroup $M$ containing $[N,N]$. We shall show that $M\simeq A$.

\vspace{1mm}

Since $M$ contains $[N,N]$, from (\ref{N/M}), we see that
\[ N/M\simeq (\bZ/\ell\bZ)^m,\mbox{ where $\ell$ is prime and $m\in\mathbb{N}$}.\]
Recall that $\ff$ and $\fg$, respectively, induce
\[ \overline{\ff}_M\in\Hom(G,\Aut(N/M))\mbox{ and a surjective }\overline{\fg}_M\in Z_{\overline{\ff}_M}^1(G,N/M)\]
as in Proposition~\ref{char prop}. Put $H = \fg^{-1}(M)$, which is a subgroup of $G$ by Proposition~\ref{char prop}(b), and has order $|M|$ because $\fg$ is bijective. Note that
\begin{equation}\label{A index} [A:H\cap A]  = [AH:H] = \ell^m/[G:AH],\mbox{ and }[G:AH]=1\mbox{ or }p.\end{equation}
In the case $[G:AH]=1$, we shall use the next lemma.

\begin{lem}\label{not perfect lem1}If $A$ has a subgroup of index $\ell^m$, then $A\simeq \mathrm{PSL}_2(7)$, or $A$ does not embed into $\mathrm{GL}_m(\ell)$.
\end{lem}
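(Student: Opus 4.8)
It suffices to show that if $A$ has a subgroup of index $\ell^m$ \emph{and} embeds into $\mathrm{GL}_m(\ell)$, then $A\simeq\mathrm{PSL}_2(7)$. I would begin with a divisibility observation. The embedding gives $|A|\mid|\mathrm{GL}_m(\ell)|=\ell^{m(m-1)/2}\prod_{i=1}^m(\ell^i-1)$, while the subgroup of index $\ell^m$ forces $\ell^m\mid|A|$ by Lagrange. Since $\ell\nmid\prod_{i=1}^m(\ell^i-1)$, comparing $\ell$-parts yields $\ell^m\mid\ell^{m(m-1)/2}$, hence $m\le m(m-1)/2$ and so $m\ge3$. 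This cheap bound drives everything: it forces the index $\ell^m\ge8$ to be non-prime.

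Next I would apply Guralnick's classification of the finite simple groups admitting a subgroup of prime-power index. Discarding the entries of prime index (ruled out by $m\ge3$), the possibilities for $A$ with index exactly $\ell^m$ reduce to: (i) $A_n$ with $n=\ell^m$; (ii) $\mathrm{PSL}_d(q)$, $q=p^f$, of index $(q^d-1)/(q-1)$; and (iii) the group $\mathrm{PSU}_4(2)\simeq\mathrm{PSp}_4(3)$ of index $27$. Case (i) is eliminated by the order estimate $|A_n|=(\ell^m)!/2>\ell^{m^2}>|\mathrm{GL}_m(\ell)|$, valid once $\ell^m\ge8$; case (iii) by $|\mathrm{PSU}_4(2)|=25920>11232=|\mathrm{GL}_3(3)|$.

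For case (ii) I would first observe that $\ell\ne p$, because $(q^d-1)/(q-1)=1+q+\cdots+q^{d-1}\equiv1\pmod p$ is coprime to $p$; hence $A\hookrightarrow\mathrm{GL}_m(\ell)$ is a faithful representation in cross characteristic, and the Landazuri--Seitz lower bounds on minimal degrees apply. For $d\ge3$, the bound $m\ge q^{d-1}-1$ and the estimate $\ell^m=(q^d-1)/(q-1)<2q^{d-1}$ give $2^m\le\ell^m<2m+2$, contradicting $m\ge3$ (the finitely many small exceptions $(d,q)=(3,2),(3,4)$ to the generic bound have prime or non-prime-power index and are discarded earlier). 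For $d=2$ we have $q+1=\ell^m$: if $q$ is even the bound $m\ge q-1$ is at once contradictory, while if $q$ is odd the bound $m\ge(q-1)/2$ forces $2^{m-1}\le m+1$, hence $m=3$, and then $\ell=2$, $q=7$. This isolates $A\simeq\mathrm{PSL}_2(7)$, which does embed since $\mathrm{PSL}_2(7)\simeq\mathrm{PSL}_3(2)=\mathrm{GL}_3(2)$.

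The main obstacle is the representation-theoretic input in case (ii): pinning down correctly stated minimal cross-characteristic degree bounds for $\mathrm{PSL}_d(q)$ and carefully clearing the short list of exceptional $(d,q)$ where the generic Landazuri--Seitz estimate fails. Everything else is a one-line divisibility or order computation, or a direct appeal to Guralnick's theorem.
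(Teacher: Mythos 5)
Your argument is correct, and it is essentially the proof that the paper outsources: for this lemma the paper simply cites \cite[Lemmas 4.2 and 4.4]{Byott simple}, and your combination of the initial $\ell$-part comparison forcing $m\geq 3$, Guralnick's classification of subgroups of prime-power index, order comparisons for $A_{\ell^m}$ and $\mathrm{PSU}_4(2)$, and the Landazuri--Seitz cross-characteristic degree bounds for $\mathrm{PSL}_d(q)$ is exactly that argument (and mirrors the paper's own proofs of Lemmas~\ref{not perfect lem2} and~\ref{not perfect lem3} for the companion case of index $p^{m-1}$). One small point of care: the exceptional pairs for the $d\geq 3$ degree bound also include $(4,2)$ and $(4,3)$ (and $(2,9)$ for $d=2$), not only $(3,2)$ and $(3,4)$, but these have index $15$, $40$, and $10$ respectively, so they are indeed discarded by your prime-power and $m\geq 3$ filters just as you say.
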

\begin{proof}See \cite[Lemmas 4.2 and 4.4]{Byott simple}.
\end{proof}

In the case $[G:AH]=p$, note that $\ell = p$ necessarily, and we shall use the next two lemmas. Their proofs are refinements of \cite[Section 4]{Byott simple}. A key fact is \cite[Theorem 1]{RG}, which gives the subgroups of prime power index in $A$, and its proof uses the classification of finite simple groups. We shall also use the hypothesis that $A$ is a subgroup of index $p$ in $G$, which means that $p$ divides the order of the outer automorphism group $\Out(A)$ of $A$.

\begin{lem}\label{not perfect lem2} If $A$ has a subgroup of index $p^{m-1}$ with $m\geq2$, then 
\begin{equation}\label{A PSL} A\simeq \mathrm{PSL}_n(q)\mbox{ with }p^{m-1} = \frac{q^n-1}{q-1},\end{equation}
or $G$ does not embed into $\mathrm{GL}_m(p)$.
\end{lem}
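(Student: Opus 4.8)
The plan is to invoke the classification \cite[Theorem 1]{RG} of subgroups of prime power index in a finite non-abelian simple group, applied to $A$ together with its subgroup of index $p^{m-1}$ (a proper non-trivial subgroup, since $p^{m-1}\geq 2$ as $m\geq 2$). This yields that $A$ is one of: (i) an alternating group $A_n$ with $n=p^{m-1}$ and point stabiliser $A_{n-1}$; (ii) $\mathrm{PSL}_n(q)$ with a point- or hyperplane-stabiliser, so that $p^{m-1}=(q^n-1)/(q-1)$; or (iii) one of the finitely many exceptions $\mathrm{PSL}_2(11)$ (index $11$), $M_{11}$ (index $11$), $M_{23}$ (index $23$), or $\mathrm{PSU}_4(2)\cong\mathrm{PSp}_4(3)$ (index $27$). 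Case (ii) is precisely the stated conclusion \eqref{A PSL}, so the task reduces to eliminating (i) and (iii) under the standing hypothesis that $p$ divides $|\Out(A)|$.

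For the exceptional groups in (iii), I would compare $p$ against $|\Out(A)|$ directly. The relevant primes are $p=11,11,23,3$ respectively, while $|\Out(A)|$ equals $1$ (for $M_{11}$ and $M_{23}$) or $2$ (for $\mathrm{PSL}_2(11)$ and $\mathrm{PSU}_4(2)$, using $\mathrm{PSU}_4(2)\cong\mathrm{PSp}_4(3)$). In every instance $p\nmid|\Out(A)|$, so these cases violate the hypothesis and cannot occur.

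For the alternating case (i), note that $|\Out(A_n)|$ divides $2$ for $n\neq 6$ and equals $4$ for $n=6$, so the only prime dividing it is $2$; since $n=p^{m-1}$ must be a prime power, the value $n=6$ is excluded, and the hypothesis $p\mid|\Out(A_n)|$ forces $p=2$ and $n=2^{m-1}$. Then $G$ is the almost simple group with socle $A_n$ and $[G:A_n]=2$, which for $n\neq 6$ is $G=S_n=S_{2^{m-1}}$. I would finish by an order comparison: an embedding $G\hookrightarrow\mathrm{GL}_m(2)$ would force $(2^{m-1})!=|S_{2^{m-1}}|\leq|\mathrm{GL}_m(2)|$, and since $m=2,3$ make $A_n$ non-simple we have $m\geq 4$, for which $(2^{m-1})!>|\mathrm{GL}_m(2)|$ holds throughout (directly at $m=4$, where $8!=40320>20160=|\mathrm{GL}_4(2)|$, and via $(2^{m-1})!>2^{m^2}>|\mathrm{GL}_m(2)|$ for $m\geq 5$). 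Hence $G$ does not embed into $\mathrm{GL}_m(2)$, the desired alternative.

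I expect the main subtlety to lie in case (i): one must run the order bound with $G=S_n$ rather than $A_n$, because the exceptional isomorphism $A_8\cong\mathrm{PSL}_4(2)\cong\mathrm{GL}_4(2)$ makes the bound fail for $A_8$ itself (indeed $A_8$ does embed in $\mathrm{GL}_4(2)$), whereas $S_8$ is genuinely too large. The only computations needed are the elementary inequality $(2^{m-1})!>|\mathrm{GL}_m(2)|$ for $m\geq 4$ and a check of the orders and outer automorphism groups of the exceptional groups against the list of \cite{RG}.
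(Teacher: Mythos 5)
Your proposal is correct and follows essentially the same route as the paper: apply Guralnick's classification of subgroups of prime-power index, eliminate the exceptional cases $\mathrm{PSL}_2(11)$, $M_{11}$, $M_{23}$, $\mathrm{PSU}_4(2)$ via the standing hypothesis $p\mid|\Out(A)|$, and rule out the alternating case by showing $S_{2^{m-1}}$ cannot embed into $\mathrm{GL}_m(2)$ for $m\geq4$. The only cosmetic difference is that the paper excludes that embedding by comparing $2$-adic valuations (the $2$-part of $|S_{2^{m-1}}|$ is at least $2^{m(m-1)/2+1}$, exceeding the $2$-part $2^{m(m-1)/2}$ of $|\mathrm{GL}_m(2)|$) rather than raw orders, and your remark that $A_8\simeq\mathrm{GL}_4(2)$ forces one to run the bound with $G=S_n$ rather than $A_n$ is exactly the relevant subtlety.
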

\begin{proof}Suppose that $A$ has a subgroup of index $p^{m-1}$ with $m\geq2$. Then, by \cite[Theorem 1]{RG}, one of the following holds.
\begin{enumerate}[(a)]
\item $A\simeq A_{p^{m-1}}$ with $p^{m-1}\geq 5$;
\item $A\simeq\mathrm{PSL}_n(q)$ with $p^{m-1} = (q^n-1)/(q-1)$;
\item $A\simeq\mathrm{PSL}_2(11)$ with $p^{m-1}=11$;
\item $A\simeq M_{11}$ with $p^{m-1}=11$, or $A\simeq M_{23}$ with $p^{m-1}=23$;
\item $A\simeq\mathrm{PSU}_4(2)$ with $p^{m-1}=27$.
\end{enumerate}
Recall that $p$ divides the order of $\Out(A)$. Since
\[ |\Out(\mathrm{PSL}_2(11))| = 2 = |\Out(\mathrm{PSU}_4(2))|,\,\ |\Out(M_{11})| = 1 = |\Out(M_{23})|,\]
cases (c),(d),(e) do not occur. Since $|\Out(A_n)|=2$ for all $n\geq 5$ with $n\neq 6$, we must  have $p=2$ with $m\geq 4$ and $G \simeq S_{2^{m-1}}$ in case (a). Notice that $S_{2^{m-1}}$ does not embed into $\mathrm{GL}_m(2)$ for $m\geq 4$ because
\begin{align*} |\mathrm{GL}_m(2)| &= 2^{m(m-1)/2}\cdot s\mbox{ with $s\in\mathbb{N}$ odd},\\
 |S_{2^{m-1}}| &= 2\cdot 2^2\cdots 2^{m-1}\cdot 6\cdot t = 2^{m(m-1)/2+1}\cdot 3t\mbox{ with }t\in\mathbb{N}. \end{align*}
We are left with case (b) and the claim now follows.
\end{proof}

To deal with the remaining case in (\ref{A PSL}), we shall follow \cite[Section 4]{Byott simple} and use \cite{PSL1,PSL2}, which give lower bounds for the degrees of projective irreducible representations of projective special linear groups in cross characteristics. In particular, we shall use the version stated in \cite[Theorem 4.3]{Byott simple}.

\begin{lem}\label{not perfect lem3}If $A$ is as in $(\ref{A PSL})$ with $m\geq 2$, then $A\simeq\mathrm{PSL}_2(7)$, or $A$ does not embed into $\mathrm{GL}_m(p)$.
\end{lem}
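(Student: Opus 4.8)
The plan is to assume, for contradiction, that $A \simeq \mathrm{PSL}_n(q)$ with $p^{m-1} = (q^n-1)/(q-1)$ and $m \geq 2$ embeds into $\mathrm{GL}_m(p)$, and deduce that necessarily $A \simeq \mathrm{PSL}_2(7)$. Write $q = r^e$ with $r$ prime. First I would verify that we are in \emph{cross characteristic}, i.e.\ $p \neq r$: reducing the geometric series modulo $r$ gives $p^{m-1} = 1 + q + \cdots + q^{n-1} \equiv 1 \pmod r$, so $r \nmid p^{m-1}$, and since $m \geq 2$ this forces $p \neq r$. This is precisely what makes the representation-theoretic bounds of \cite{PSL1,PSL2} applicable.

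Next, since $A$ is simple, an embedding into $\mathrm{GL}_m(p)$ is a faithful $m$-dimensional representation over $\overline{\mathbb{F}_p}$, which cannot be a sum of trivial modules and so has a nontrivial irreducible constituent; hence $m \geq R_p(A)$, where $R_p(A)$ denotes the minimal degree of a nontrivial projective irreducible representation of $A$ in characteristic $p$. I would then invoke \cite[Theorem 4.3]{Byott simple} (the Landazuri--Seitz type bounds extracted from \cite{PSL1,PSL2}) to obtain an explicit lower bound for $R_p(\mathrm{PSL}_n(q))$, which is of order $(q-1)/\gcd(2,q-1)$ when $n = 2$ and of order $q^{n-1}$ when $n \geq 3$.

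The heart of the argument is then an elementary size comparison. On one hand $p^{m-1} = (q^n-1)/(q-1) < q^n$, so since $p \geq 2$ we get $m - 1 \leq \log_2\!\big((q^n-1)/(q-1)\big) < n\log_2 q$, i.e.\ $m$ is at most logarithmic in $q^n$. On the other hand $m \geq R_p(A)$ grows linearly in $q$ (for $n = 2$) or exponentially in $n$ (for $n \geq 3$). These estimates are incompatible except for finitely many pairs $(n,q)$, which I would pin down explicitly. For $n \geq 3$ the inequality eliminates every case: the only borderline possibilities have $q^{n-1}$ small, and for each the integer $(q^n-1)/(q-1)$ either fails to be a prime power or forces an $m$ far below the minimal degree (for instance $(n,q) = (3,2)$ gives $p^{m-1} = 7$, hence $m = 2$, whereas $\mathrm{PSL}_3(2) \cong \mathrm{PSL}_2(7)$ has minimal faithful degree $3$ in characteristic $7$). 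For $n = 2$ one has $p^{m-1} = q+1$, and running through the finitely many $q$ for which $q+1$ is a prime power while $(q-1)/\gcd(2,q-1)$ remains small, the only surviving case is $q = 7$: there $p^{m-1} = 8$, so $(p,m) = (2,4)$, and $\mathrm{PSL}_2(7) \cong \mathrm{PSL}_3(2)$ indeed embeds into $\mathrm{GL}_3(2) \leq \mathrm{GL}_4(2)$. This forces $A \simeq \mathrm{PSL}_2(7)$.

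I expect the main obstacle to lie in the treatment of the finitely many small $(n,q)$, where the generic Landazuri--Seitz bounds have genuine exceptions and where exceptional isomorphisms between the groups $\mathrm{PSL}_n(q)$ (such as $\mathrm{PSL}_2(4) \cong A_5$, $\mathrm{PSL}_2(9) \cong A_6$, and $\mathrm{PSL}_2(7) \cong \mathrm{PSL}_3(2)$) mean that a single group can arise from (\ref{A PSL}) with two different pairs $(p,m)$. Care is needed to use the \emph{exact} minimal faithful degree in each such case rather than the asymptotic bound, and to confirm that $\mathrm{PSL}_2(7)$ really does embed into $\mathrm{GL}_4(2)$ via its $\mathrm{PSL}_3(2)$ description while every other candidate fails, so that the exception in the statement is exactly $\mathrm{PSL}_2(7)$.
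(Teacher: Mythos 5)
Your proposal follows essentially the same route as the paper's proof: both reduce to the cross-characteristic minimal-degree bounds of \cite{PSL1,PSL2} as packaged in \cite[Theorem 4.3]{Byott simple}, play them off against the identity $p^{m-1}=(q^n-1)/(q-1)$, and check the finitely many surviving small cases, arriving at $\mathrm{PSL}_2(7)$ (via $n=2$, $q=7$, $(p,m)=(2,4)$) as the unique genuine exception. The paper carries out the case check you defer by first disposing of $m=2$ using \cite[Lemma 4.1(a)]{Tsang solvable}, then observing that for $m\geq 3$ the exceptional pairs $(n,q)$ of \cite[Theorem 4.3]{Byott simple} cannot satisfy the identity, after which the inequalities force $(p,m)=(2,3)$ or $(2,4)$ and hence $q=3$ (non-simple) or $q=7$.
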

\begin{proof}Suppose that $A$ is as in (\ref{A PSL}) with $m\geq 2$, and in particular
\begin{equation}\label{pm} p^{m-1} = \frac{q^n-1}{q-1}.\end{equation}
We already know by \cite[Lemma 4.1(a)]{Tsang solvable} that $A$ does not embed into $\mathrm{GL}_2(p)$. Hence, we may assume $m\geq 3$, and together with (\ref{pm}), we deduce that
\[ (n,q) \neq (3,2),(2,4),(3,4), (4,2), (4,3), (2,9).\]
Suppose now that $A$ embeds into $\mathrm{GL}_m(p)$. By \cite[Theorem 4.3]{Byott simple}, we have:
\begin{enumerate}[$\bullet$] 
\item If $n\geq 3$, then $m\geq (q^n-q)/(q-1)-1$;
\item If $n=2$, then $m\geq (q-1)/\gcd(q-1,2)$.
\end{enumerate}
 In the first case, we have
\[ m \geq \frac{q^n-q}{q-1}-1 = \frac{q^n-1}{q-1} - 2 = p^{m-1}-2.\]
Since $m\geq 3$, this yields $(p,m)=(2,3)$, which cannot satisfy (\ref{pm}) for $n\geq 3$. In the second case, we have
\[ m \geq \frac{q-1}{\gcd(q-1,2)} = \frac{p^{m-1}-2}{\gcd(p^{m-1}-2,2)}\geq \frac{p^{m-1}-2}{2}.\]
Since $m\geq 3$, this yields $(p,m)=(2,3)$ or $(2,4)$, which corresponds to $q=3$ or $7$, respectively, for $n=2$. But $\mathrm{PSL}_2(3)$ is non-simple, so we are left with the case $A\simeq \mathrm{PSL}_2(7)$, whence the claim.
\end{proof}

\begin{lem}\label{H=A}If $A\not\simeq\mathrm{PSL}_2(7)$, then $\fg^{-1}(M)=A$ and $[N:M] = p$.
\end{lem}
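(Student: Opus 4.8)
The statement to prove is Lemma~\ref{H=A}: assuming $A\not\simeq\mathrm{PSL}_2(7)$, we must show $\fg^{-1}(M)=A$ and $[N:M]=p$. Recall $H=\fg^{-1}(M)$ has order $|M|$ since $\fg$ is bijective, and $[N:M]=\ell^m$ with $\ell$ prime. The plan is to rule out every possibility except the desired one by combining the two cases $[G:AH]=1$ and $[G:AH]=p$ from (\ref{A index}) with the embedding obstructions furnished by Lemmas~\ref{not perfect lem1}, \ref{not perfect lem2}, and \ref{not perfect lem3}. The key leverage is Proposition~\ref{char prop}(c): since $\fg$ is bijective and $M$ is characteristic in $N$, there is a regular subgroup of $\Hol(M)$ isomorphic to $H=\fg^{-1}(M)$. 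I will use this to force an embedding of $H$ (or of $G$) into a general linear group over $\mathbb{F}_\ell$, which the cited lemmas then forbid.

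\textbf{Case $[G:AH]=1$.} Here $AH=G$, so by (\ref{A index}) we have $[A:H\cap A]=\ell^m$, i.e.\ $A$ has a subgroup of index $\ell^m$. Lemma~\ref{not perfect lem1} then says that (since $A\not\simeq\mathrm{PSL}_2(7)$) $A$ does not embed into $\mathrm{GL}_m(\ell)$. The plan is to derive a contradiction by producing such an embedding. The quotient $N/M\simeq(\mathbb{Z}/\ell\mathbb{Z})^m$ is an elementary abelian $\ell$-group, so $\Hol(N/M)=(N/M)\rtimes\mathrm{GL}_m(\ell)$. Using Proposition~\ref{char prop}(a), the induced data $\overline{\ff}_M,\overline{\fg}_M$ give a regular subgroup of $\Hol(N/M)\cong\mathrm{AGL}_m(\ell)$ isomorphic to the quotient $G/H$ (via $\overline{\fg}_M$ being surjective with kernel $H$). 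A point-stabilizer argument in this affine group should produce a copy of $A$—or of $G/(\text{something})$ containing $A$—inside $\mathrm{GL}_m(\ell)$, contradicting Lemma~\ref{not perfect lem1}. This case therefore cannot occur, which forces $[G:AH]=p$.

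\textbf{Case $[G:AH]=p$.} Now $\ell=p$ necessarily (as noted before Lemma~\ref{not perfect lem2}), $AH\neq G$ so $H\not\subseteq A$ would fail—in fact $AH$ is the unique index-$p$ subgroup $A$, forcing $H\subseteq A$. From (\ref{A index}), $[A:H\cap A]=p^m/p=p^{m-1}$, so $A$ has a subgroup of index $p^{m-1}$. If $m=1$ we are done immediately: then $H\cap A=A$, so $H=A$ (as $|H|=|M|$ and $H\subseteq A$ with $[A:H\cap A]=1$), giving $\fg^{-1}(M)=A$ and $[N:M]=p$, exactly the claim. So suppose $m\geq 2$ for contradiction. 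By Lemma~\ref{not perfect lem2}, either $A\simeq\mathrm{PSL}_n(q)$ with $p^{m-1}=(q^n-1)/(q-1)$—i.e.\ the situation (\ref{A PSL})—or $G$ does not embed into $\mathrm{GL}_m(p)$; and in the former case Lemma~\ref{not perfect lem3} (using $A\not\simeq\mathrm{PSL}_2(7)$) says $A$ does not embed into $\mathrm{GL}_m(p)$ either. So in all sub-cases with $m\geq 2$, at least one of $G$ or $A$ fails to embed into $\mathrm{GL}_m(p)$.

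\textbf{Producing the forbidden embedding; the main obstacle.} To close the contradiction for $m\geq 2$, I again invoke Proposition~\ref{char prop} applied to the characteristic subgroup $M$: the triviality or structure of $\overline{\ff}_M$ must be analyzed on the affine group $\Hol(N/M)=(N/M)\rtimes\mathrm{GL}_m(p)$. The regular subgroup $\overline{\GG}$ arising from $(\overline{\ff}_M,\overline{\fg}_M)$ is isomorphic to $G$, and its stabilizer of the identity coset is a complement giving a homomorphism $G\to\mathrm{GL}_m(p)$ whose kernel I must control. \emph{The hard part} is showing this homomorphism is injective on the relevant subgroup—i.e.\ that $G$ (or at least $A$) genuinely embeds into $\mathrm{GL}_m(p)$, rather than collapsing. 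The image $\overline{\ff}_M(G)\leq\mathrm{GL}_m(p)$ need not a priori be faithful, so I expect to argue that $\ker(\overline{\ff}_M)$ is a normal subgroup of $G$; by (\ref{normal sub}) it is $1$, $A$, or $G$. Ruling out $\ker(\overline{\ff}_M)\supseteq A$ requires showing $\overline{\ff}_M$ cannot be trivial on $A$, which I expect to handle via the crossed-homomorphism relation (\ref{g prop}): if $\overline{\ff}_M$ were trivial on $A$ then $\overline{\fg}_M|_A$ would be a homomorphism $A\to N/M$ into an abelian group, hence trivial on the perfect group $A$, forcing $A\subseteq H=\fg^{-1}(M)$ and contradicting $[A:H\cap A]=p^{m-1}>1$. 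This yields the faithful embedding of (a quotient containing) $A$ into $\mathrm{GL}_m(p)$, contradicting Lemma~\ref{not perfect lem2} or~\ref{not perfect lem3}, and completes the proof.
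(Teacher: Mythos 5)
Your core mechanism --- analyzing $\ker(\overline{\ff}_M)$, which by (\ref{normal sub}) is $1$, $A$ or $G$, and ruling out $A\subseteq\ker(\overline{\ff}_M)$ because $\overline{\fg}_M|_A$ would then be a homomorphism from the perfect group $A$ into the abelian group $N/M$, hence trivial --- is exactly the engine of the paper's proof, just run in the contrapositive direction. The paper uses Lemmas~\ref{not perfect lem1}--\ref{not perfect lem3} to conclude that $\overline{\ff}_M$ is \emph{not} injective, deduces $A\subseteq\ker(\overline{\ff}_M)$, hence that $\overline{\fg}_M|_A$ is trivial and $A\subseteq H$; since $H\subsetneq G$ and $[G:A]=p$, this gives $H=A$ uniformly in all cases. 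So your final paragraph is essentially the paper's argument.

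The genuine weak spot is your treatment of the case $[G:AH]=1$. The ``point-stabilizer argument'' you sketch does not deliver what you need: the subgroup $\{\rho(\overline{\fg}_M(\sigma))\cdot\overline{\ff}_M(\sigma):\sigma\in G\}$ of $\Hol(N/M)$ is transitive but not regular, its stabilizer of the identity coset is $\overline{\ff}_M(H)$ --- a quotient of $H$, not a copy of $A$ or $G$ --- and ``$G/H$'' is not meaningful since $\overline{\fg}_M$ is only a crossed homomorphism and $H$ need not be normal. Likewise, Proposition~\ref{char prop}(c) is not the relevant tool here (it enters only \emph{after} this lemma, to get $e(A,M)\neq0$ and hence $M\simeq A$); the map into $\mathrm{GL}_m(\ell)$ is simply $\overline{\ff}_M$ itself, since $\Aut(N/M)\simeq\mathrm{GL}_m(\ell)$. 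Fortunately the fix is immediate: the kernel analysis of your last paragraph applies verbatim when $[G:AH]=1$ (with $\ell^m$ in place of $p^{m-1}$ and Lemma~\ref{not perfect lem1} in place of Lemmas~\ref{not perfect lem2} and~\ref{not perfect lem3}), and yields $A\subseteq H$, contradicting $[A:H\cap A]=\ell^m>1$. With that substitution your proof closes and coincides with the paper's.
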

\begin{proof}We have $H = \fg^{-1}(M)$ by definition and recall the equalities in (\ref{A index}). There are three cases, and recall that $\ell=p$ necessarily when $[G:AH]=p$. 
\begin{enumerate}[$(1)$]
\item $[G:AH]=1$;
\item $[G:AH]=p$ and $m=1$;
\item $[G:AH]=p$ and $m\geq 2$.
\end{enumerate}
Let us first prove that $A\subset H$. In case (2), we have $[A:H\cap A]=1$, and so clearly $A\subset H$. In cases (1) and (3), suppose that $A\not\simeq\mathrm{PSL}_2(7)$. Then, since the range of $\overline{\ff}_M$ is equal to
\[ \Aut(N/M)\simeq \Aut((\bZ/\ell\bZ)^m)\simeq \mathrm{GL}_m(\ell),\]
we deduce from Lemma~\ref{not perfect lem1},~\ref{not perfect lem2}, and~\ref{not perfect lem3} that $\overline{\ff}_M$ is not injective. From (\ref{normal sub}), it follows that $\ker(\overline{\ff}_M)$ has to contain $A$, whence $(\overline{\fg}_M)|_A$ is a homomorphism by Proposition~\ref{h prop}(c). Since the range of $\overline{\fg}_M$ is equal to
\[ N/M \simeq (\bZ/\ell\bZ)^m,\]
necessarily $(\overline{\fg}_M)|_A$ is trivial, which means that $A\subset H$. In all three cases, we have $A\subset H$. Since $A$ has index $p$ in $G$ and $H\subsetneq G$, we must have $H=A$. This in turn implies $[N:M] = [G:A] = p$, as claimed.
\end{proof}

\begin{proof}[Proof of Theorem~$\ref{thm2}$] Suppose first that $A\simeq \mathrm{PSL}_2(7)$. Then $G\simeq \mathrm{PGL}_2(7)$, and by \cite[Theorem 1.10]{Tsang solvable}, we know that
\[ e(\mathrm{PGL}_2(7),N) =0\mbox{ for all solvable $N$}.\]
Since $\mathrm{PGL}_2(7)$ and $\mathrm{PSL}_2(7)\times C_2$ are the only non-perfect insolvable groups of order $336$, we see that Theorem~\ref{thm2} holds in this case.

\vspace{1mm}

Suppose now that $A\not\simeq\mathrm{PSL}_2(7)$. Then $\fg^{-1}(M) = A$ by Lemma~\ref{H=A}, and so $e(A,M)\neq0$ by Proposition \ref{char prop}(c). Since $A$ is non-abelian simple, by \cite{Byott simple}, this implies $M\simeq A$. Since $[N:M]=p$, the theorem follows from Lemma~\ref{copy of A lem}.
\end{proof}

\section{The case when $N$ is perfect}\label{case3 sec}

In this section, assume that $N$ is perfect and $e(G,N)$ is non-zero. We shall \par\noindent prove Theorem~\ref{thm3}. As in Section~\ref{case2 sec}, by (\ref{B formula}) and Proposition~\ref{fg prop}, there exist
\[ \ff\in\Hom(G,\Aut(N))\mbox{ and a bijective }\fg\in Z_\ff^1(G,N).\]
Also, let $\fh\in\Hom(G,\Aut(N))$ be as in (\ref{h def}). Let $M$ be any maximal characteristic subgroup of $N$. We shall show that $M=Z(N)$ and $N/M\simeq A$.

\vspace{1mm}

Since $N$ is perfect, from (\ref{N/M}), we see that
\[ N/M \simeq T^m,\mbox{ where $T$ is non-abelian simple and $m\in\mathbb{N}$}.\]
Recall that $\ff$ and $\fg$, respectively, induce
\[ \overline{\ff}_M\in\Hom(G,\Aut(N/M))\mbox{ and a surjective }\overline{\fg}_M\in Z_{\overline{\ff}_M}^1(G,N/M)\]
as in Proposition~\ref{char prop}. 
 
\begin{lem}\label{A into T}The group $A$ embeds into $T$.
\end{lem}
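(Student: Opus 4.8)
The plan is to pass everything down to the quotient $N/M\simeq T^m$ and study the three induced maps $\overline{\ff}_M,\overline{\fh}_M\in\Hom(G,\Aut(T^m))$ together with the surjective crossed homomorphism $\overline{\fg}_M\in Z^1_{\overline{\ff}_M}(G,T^m)$. Since $T$ is non-abelian simple we have $Z(T^m)=1$, so by Proposition~\ref{h prop}(b) the fixed points of $(\overline{\ff}_M,\overline{\fh}_M)$ are exactly $\overline{\fg}_M^{-1}(1)$. By (\ref{normal sub}) each of $\ker(\overline{\ff}_M)$ and $\ker(\overline{\fh}_M)$ is one of $1$, $A$, or $G$, and I would organise the whole proof around this trichotomy applied to the restrictions to $A$. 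Concretely, the cases split according to whether $A$ lies in one of these kernels or in neither.

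The favourable case is when $A\subseteq\ker(\overline{\ff}_M)$. Then by Proposition~\ref{h prop}(c) the restriction $\overline{\fg}_M|_A$ is a genuine homomorphism $A\to T^m$, which, $A$ being simple, is either trivial or injective. If it is injective, I would compose with the coordinate projections $T^m\to T$: since $A$ is simple each coordinate either kills $A$ or is injective on it, and they cannot all kill $A$ (else $\overline{\fg}_M|_A$ would be trivial), so at least one coordinate gives $A\hookrightarrow T$. The symmetric situation $A\subseteq\ker(\overline{\fh}_M)$ is handled identically via Proposition~\ref{h prop}(d), which makes $\overline{\fg}_M|_A$ an anti-homomorphism (so $a\mapsto\overline{\fg}_M(a)^{-1}$ is a homomorphism), to which the same projection argument applies.

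It remains to dispose of the degenerate possibility that $\overline{\fg}_M|_A$ is trivial while $A$ lies in $\ker(\overline{\ff}_M)$; note that then $A$ lies in both kernels, since $\overline{\fh}_M|_A=\conj(\overline{\fg}_M|_A)\cdot\overline{\ff}_M|_A$ is also trivial. Here I would use the homomorphism $\Theta\colon G\to\Hol(T^m)$, $g\mapsto\rho(\overline{\fg}_M(g))\,\overline{\ff}_M(g)$, whose image is transitive on $T^m$ because the orbit of $1$ is $\overline{\fg}_M(G)=T^m$. Since $A\subseteq\ker(\Theta)$, the image $\Theta(G)$ is a quotient of $G/A\simeq C_p$, hence cyclic of order $1$ or $p$; transitivity then forces $|T^m|$ to divide $|\Theta(G)|$, so $|T^m|\in\{1,p\}$. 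This is impossible, as $|T^m|=|T|^m\geq 60$ and a prime cannot be a power of a non-abelian simple group order. Thus the only case left untreated is when both $\overline{\ff}_M|_A$ and $\overline{\fh}_M|_A$ are injective.

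This last case is where I expect the real difficulty to lie. Here $\overline{\ff}_M$ embeds $A$ into $\Aut(T^m)=\Aut(T)\wr S_m$, and I would analyse the induced permutation action on the $m$ factors through $\Aut(T^m)\to S_m$. If $A$ fixes every factor, then $\overline{\ff}_M(A)\le\Aut(T)^m$, some coordinate $A\to\Aut(T)$ is injective (simplicity plus trivial intersection of the coordinate kernels), and because $\Out(T)$ is solvable by Lemma~\ref{CFSG lem}(a) while $A$ is non-abelian simple, the composite $A\to\Aut(T)\to\Out(T)$ is trivial, giving $A\hookrightarrow\Inn(T)\simeq T$. The main obstacle is the remaining possibility that $A$ permutes the factors nontrivially, so that $A$ injects into $S_m$; to rule this out I anticipate needing CFSG-type lower bounds on faithful permutation degrees and on cross-characteristic representation degrees, of exactly the kind invoked in \cite{Byott simple} and in Lemmas~\ref{not perfect lem1} through~\ref{not perfect lem3}, in order to bound $m$ against the subgroup structure of $A$ and force the action on the factors to be trivial after all, thereby reducing to the case already settled.
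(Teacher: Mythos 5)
Your case analysis is sound and, for the most part, runs parallel to the paper's proof (which tracks only $\overline{\ff}_M$: by (\ref{normal sub}) its restriction to $A$ is injective or trivial, and the trivial case is handled exactly as in your ``favourable'' and ``degenerate'' paragraphs via Proposition~\ref{h prop}(c) and the order count $p\geq |T|^m$). But the one step you explicitly leave open --- ruling out a nontrivial permutation action of $A$ on the $m$ factors of $T^m$ --- is precisely the crux of the lemma, and ``anticipating'' that CFSG-type representation-theoretic bounds will handle it is not a proof. Moreover, the machinery you point to (Lemmas~\ref{not perfect lem1}--\ref{not perfect lem3}, cross-characteristic degree bounds) is aimed at embeddings into $\mathrm{GL}_m(\ell)$, not into $S_m$, so it is not even the right tool here.

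What actually closes the gap is a short arithmetic argument, as in the proof of \cite[Lemma 3.3]{Byott simple}. Since $T$ is non-abelian simple, it is not nilpotent, so some prime $r\neq p$ divides $|T|$. From $p|A|=|G|=|N|=|M||T|^m$ and $r\neq p$ one gets that $r^m$ divides $|A|$. On the other hand $v_r(m!)=\sum_{i\geq 1}\lfloor m/r^i\rfloor<m$, so $r^m$ does not divide $m!$ and hence $|A|$ does not divide $m!$; therefore $A$ cannot embed into $S_m$, and by simplicity the composite $A\to\Aut(T)^m\rtimes S_m\to S_m$ is trivial. After that your own argument (solvability of $\Out(T)^m$, then a coordinate projection) finishes the injective case exactly as the paper does. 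Without this counting step your proof is incomplete in the only case that carries real content.
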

\begin{proof}It is known, by \cite[Lemma 3.2]{Byott simple} for example, that
\[ \Aut(N/M) \simeq \Aut(T^m) \simeq \Aut(T)^m\rtimes S_m.\]
There exists a prime $r\neq p$ which divides $|T|$ because groups of prime power order are nilpotent. Then, since
\[ p |A| = |G| = |N| = |M||T|^m,\mbox{ we have }r^m \mbox{ divides }|A|.\]
But $r^m$ does not divide $m!$ as in the proof of \cite[Lemma 3.3]{Byott simple}. It follows that $A$ cannot embed into $S_m$ and so the homomorphism
\[\begin{tikzcd}[column sep = 2cm]
A \arrow{r}{\overline{\ff}_M} & \Aut(N/M) \arrow[equal]{r}{\mbox{\tiny identification}} & \Aut(T)^m\rtimes S_m \arrow{r}{\mbox{\tiny projection}}& S_m
\end{tikzcd}\]
is trivial. Since $\Out(T)$ is solvable by Lemma~\ref{CFSG lem}(a), the homomorphism
\[\begin{tikzcd}[column sep = 2cm]
A \arrow{r}{\overline{\ff}_M} & \Aut(T)^m \arrow{r}{\mbox{\tiny quotient}}& \Out(T)^m
\end{tikzcd}\]
is also trivial. We then see that $\overline{\ff}_M(A)$ lies in $\Inn(T)^m$.
\begin{enumerate}[$\bullet$]
\item If $(\overline{\ff}_M)|_A$ is injective, then clearly $A$ embeds into $\Inn(T)^m\simeq T^m$.
\item If $(\overline{\ff}_M)|_A$ is trivial, then $(\overline{\fg}_M)|_A$ is a homomorphism by Proposition~\ref{h prop}(c). But $(\overline{\fg}_M)|_A$ cannot be trivial, for otherwise $A\subset\fg^{-1}(M)$, and\\
\[ p = |G|/|A| \geq |G|/|\fg^{-1}(M)| = |N|/|M| = |T|^m,\]
which is impossible. It follows that $(\overline{\fg}_M)|_A$ must be injective, so $A$ embeds into $N/M\simeq T^m$.
\end{enumerate}
In both cases $A$ embeds into $T^m$. Observe that the projection of $A$ onto the $m$ components of $T^m$ cannot be all trivial, so in fact $A$ embeds into $T$. 
\end{proof}

As in Section~\ref{case2 sec}, we shall use \cite[Theorem 1]{RG} and also the hypothesis that $A$ has index $p$ in $G$. The former lists the subgroups of prime power index in a finite non-abelian simple group while the latter implies that $p$ divides the order of the outer automorphism group $\Out(A)$ of $A$.

\begin{lem}\label{m=1}We have $m=1$ and $|M| = p$.
\end{lem}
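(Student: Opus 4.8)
The plan is to combine the embedding $A\hookrightarrow T$ provided by Lemma~\ref{A into T} with a count of orders, and then to eliminate the single remaining possibility using the classification of prime-power index subgroups \cite[Theorem 1]{RG}. Since $A$ embeds into $T$, Lagrange's theorem gives an integer $j\geq1$ with $|T| = j|A|$, namely the index of the image of $A$ in $T$. On the other hand $N/M\simeq T^m$ yields $p|A| = |G| = |N| = |M|\cdot|T|^m$. Substituting $|T|=j|A|$ and cancelling one factor of $|A|$ gives the key relation
\[ |M|\cdot j^m\cdot|A|^{m-1} = p. \]
Because $p$ is prime and every factor on the left is a positive integer, this is a very rigid constraint.

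First I would rule out $m\geq2$. In that case $|A|^{m-1}$ is a divisor of $p$ that is at least $|A|\geq 60$, so it must equal $p$ itself; but $p=|A|^{m-1}$ forces $|A|$ to be prime with $m=2$, contradicting that $A$ is non-abelian simple and hence of composite order. Thus $m=1$, and the relation collapses to $|M|\cdot j = p$. Since $p$ is prime, either $(|M|,j)=(p,1)$ or $(|M|,j)=(1,p)$. The first outcome is exactly what we want: $|M|=p$ and $|T|=|A|$, so the embedding $A\hookrightarrow T$ is forced to be an isomorphism and $N/M\simeq T\simeq A$.

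It then remains to exclude $(|M|,j)=(1,p)$. Here $M=1$, so $N=T$ is non-abelian simple and $A$ is (isomorphic to) a subgroup of index $p$ in $T$; this is precisely the situation governed by \cite[Theorem 1]{RG}, which forces the pair $(T,A)$ into a short explicit list. I would check each entry against our two standing hypotheses: that $A$ is simple, and that $p$ divides $|\Out(A)|$ (the latter because $A$ has index $p$ in the almost simple group $G$, so $G/A$ embeds into $\Out(A)$). In the generic families the index-$p$ subgroup is a point or hyperplane stabilizer --- $A_{p-1}$ inside $A_p$, or a maximal parabolic inside $\mathrm{PSL}_n(q)$ with $(q^n-1)/(q-1)=p$; the parabolics are not simple and so are excluded at once, while $A_{p-1}$ is simple only for $p\geq 7$, where $|\Out(A_{p-1})|\in\{2,4\}$ is not divisible by $p$. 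In the sporadic cases the subgroup is $M_{10}$ inside $M_{11}$ (not simple), $A_5$ inside $\mathrm{PSL}_2(11)$ with $p=11$, or $M_{22}$ inside $M_{23}$ with $p=23$; for the latter two, $|\Out(A)|=2$ is again not divisible by $p$. Every case thereby contradicts $p\mid|\Out(A)|$ or the simplicity of $A$, so $(|M|,j)=(1,p)$ cannot occur and we conclude $|M|=p$.

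I expect the order count and the elimination of $m\geq 2$ to be routine. The main obstacle is the final case analysis: one must correctly read off the index-$p$ subgroups from \cite[Theorem 1]{RG} and verify, entry by entry, that the simplicity of $A$ together with $p\mid|\Out(A)|$ fails --- simplicity being what kills the non-simple parabolic and $M_{10}$ candidates, and the divisibility $p\mid|\Out(A)|$ being what kills the genuinely simple candidates $A_{p-1}$, $A_5$, and $M_{22}$.
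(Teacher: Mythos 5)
Your proposal is correct, and for most of it (the order count $p = |M|\cdot j^m\cdot|A|^{m-1}$ forcing $m=1$ and $|M|\in\{1,p\}$, and the elimination of the alternating and sporadic cases of \cite[Theorem 1]{RG} via $p\mid|\Out(A)|$ or non-simplicity of $A_4$ and $M_{10}$) it matches the paper's proof. Where you genuinely diverge is the $\mathrm{PSL}_n(q)$ case. The paper does not use the identification of the index-$p$ subgroup there; instead it argues that, since $N$ is simple with trivial center, $(\ff,\fh)$ is fixed point free, at least one of $\ff,\fh$ is injective, and $\ff(G)$ cannot lie in $\Inn(N)$ (as $N\not\simeq G$), so the composite $G\to\Aut(N)\to\Out(T)$ is non-trivial and hence $p$ divides $|\Out(\mathrm{PSL}_n(q))|=2\gcd(n,q-1)f$ or $\gcd(n,q-1)f$ --- impossible since $p=(q^n-1)/(q-1)>\max\{2,q-1,f\}$. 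You instead read off from \cite[Theorem 1]{RG} that in this case the subgroup of index $p$ \emph{is} the stabilizer of a line or hyperplane, i.e.\ a maximal parabolic, which for a simple $\mathrm{PSL}_n(q)$ always has a non-trivial normal unipotent radical and so is never (non-abelian) simple; this contradicts $A$ being simple. Your route is shorter and avoids the $\ff,\fh$ machinery entirely, but it leans on the stronger form of Guralnick's statement (the explicit identification of $H$, not merely of $T$ and the index); the paper's own restatement of the theorem records only ``$T\simeq\mathrm{PSL}_n(q)$ with $p=(q^n-1)/(q-1)$,'' which would not suffice for your argument. Provided you cite the full statement, your case analysis is complete and the proof is valid.
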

\begin{proof}By Lemma~\ref{A into T}, we know that $A$ embeds into $T$, and write $|T| = d|A|$ for $d\in\mathbb{N}$. Then, we have
\[ p|A| = |G| = |N| = |M||T|^m = d^m|A|^m|M|,\mbox{ and so }p = d^m|A|^{m-1}|M|.\]
This gives $m=1$, and $|M|=1$ or $p$. Suppose for contradiction that $|M|=1$, in which case $N\simeq T$ and $A$ embeds into $T$ as a subgroup of index $p$. Since $T$ is non-abelian simple, one of the following holds by \cite[Theorem 1]{RG}.
\begin{enumerate}[(a)]
\item $T\simeq A_p$ and $A\simeq A_{p-1}$ with $p\geq 5$;
\item $T\simeq \mathrm{PSL}_n(q)$ with $p=(q^n-1)/(q-1)$;
\item $T\simeq \mathrm{PSL}_2(11)$ and $A\simeq A_5$ with $p=11$;
\item $T\simeq M_{11}$ and $A\simeq M_{10}$ with $p=11$, or
 \\$T\simeq M_{23}$ and $A\simeq \mathrm{M}_{22}$ with $p=23$.
\end{enumerate}
Note that $\mathrm{M}_{10}$ is non-simple. Since $p$ divides $|\Out(A)|$ while
\[ |\Out(A_n)| = 2\mbox{ or }4\mbox{ for $n\geq 5$}\mbox{ and } |\Out(M_{22})|=2,\]
cases (a), (c), and (d) do not occur. To deal with case (b), note that $N\simeq T$ has trivial center, so $(\ff,\fh)$ is fixed point free by Proposition~\ref{h prop}(b). Thus, the intersection $\ker(\ff)\cap\ker(\fh)$ is trivial, and by (\ref{normal sub}), at least one of $\ff$ and $\fh$ has to be injective. Since $N$ is not isomorphic to $G$, and by definition
\[ \ff(G) \subset \Inn(N) \iff \fh(G)\subset \Inn(N),\]
the image $\ff(G)$ cannot lie in $\Inn(N)\simeq N$. It follows 
the homomorphism
\[\begin{tikzcd}[column sep = 2cm]
G \arrow{r}{\ff} & \Aut(N) \arrow{r}{\mbox{\tiny quotient}} & \Out(N) \arrow{r}{\simeq} & \Out(T)
\end{tikzcd}\]
is non-trivial. From (\ref{normal sub}), we then deduce that $p$ has to divide $|\Out(T)|$. But for $n\geq 2$, by \cite[Theorem 3.2]{Wilson} for example, we know that
\[ |\Out(\mathrm{PSL}_n(q))| = 2\gcd(n,q-1)f \mbox{ or }\gcd(n,q-1)f,\]
where $q =r^f$ with $r$ a prime. In case (b), note that
\[ p = (q^n-1)/(q-1) = q^{n-1}+\cdots +q+1\geq q+1 > \max\{2, q-1, f\},\]
and we see that $p$ cannot divide $|\Out(\mathrm{PSL}_n(q))|$. Hence, all four cases (a) to (d) are impossible, so necessarily $|M|=p$, as desired.
\end{proof}

\begin{proof}[Proof of Theorem~$\ref{thm3}$ Condition (1)] So far, we have shown that 
\[A\mbox{ embeds into $T$},\, N/M\simeq T,\mbox{ and }|M|=p.\]
By comparing orders, in fact $T\simeq A$. We have a homomorphism
\[ N\longrightarrow \Aut(M);\hspace{1em}\eta\mapsto (x\mapsto \eta x\eta^{-1})\]
because $M$ is normal. But it must be trivial since $N$ is perfect while $\Aut(M)$ is cyclic. This means that $M\subset Z(N)$, and so $M=Z(N)$ by the maximality of $M$. This claim then follows.
\end{proof}

Now, we know that $N$ is quasisimple, with $N/Z(N)\simeq A$ and $|Z(N)|=p$. Using this, we may prove the next two lemmas.

\begin{lem}\label{A in N lem}There is no subgroup isomorphic to $A$ in $N$.
\end{lem}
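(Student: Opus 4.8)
The plan is to argue by contradiction, exploiting that $N$ is perfect with centre $Z(N)\simeq C_p$ of prime order (established in the proof of Theorem~\ref{thm3}, Condition~(1)). Suppose that $N$ contained a subgroup $B$ with $B\simeq A$. I would first examine the intersection $B\cap Z(N)$: it is a normal subgroup of $B\simeq A$, hence trivial or equal to $B$ by the simplicity of $A$. The latter would force $B\subseteq Z(N)$, making $B$ abelian, which is impossible since $A$ is non-abelian. Therefore $B\cap Z(N)=1$.

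Next I would count orders. Since $B\cap Z(N)=1$, we have $|B\cdot Z(N)| = |B|\cdot|Z(N)| = |A|\cdot p = |N|$, so $N = B\cdot Z(N)$. Because $Z(N)$ is central and meets $B$ trivially, this product is in fact direct, giving $N = B\times Z(N)\simeq A\times C_p$.

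The final step is to contradict perfectness. As $A$ is perfect, being non-abelian simple, and $C_p$ is abelian, the commutator subgroup of $A\times C_p$ is $A\times 1$, so its abelianization is $C_p\neq 1$. Thus $A\times C_p$ is not perfect, contradicting the hypothesis that $N$, being quasisimple, is perfect. Hence no such $B$ can exist.

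I do not expect any genuine obstacle here, as the argument is short and purely order-theoretic. The only point requiring care is recognizing that trivial intersection with the central subgroup $Z(N)$ automatically produces the direct-product splitting $N\simeq A\times C_p$, which is precisely the structure that perfectness forbids.
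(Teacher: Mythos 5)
Your proof is correct and follows essentially the same route as the paper: both arguments show $B\cap Z(N)=1$, deduce $N=BZ(N)$ by an order count, and derive a contradiction with the perfectness of $N$ (the paper notes directly that $[N,N]=[B,B]\subsetneq N$, while you make the direct-product decomposition $N\simeq A\times C_p$ explicit, which amounts to the same thing).
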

\begin{proof}Suppose for contradiction that $B$ is such a subgroup. Then, we have
\[ |BZ(N)| = |B||Z(N)|/|B\cap Z(N)| = p|A| = |G| = |N|,\]
where $B\cap Z(N)$ is trivial because $B$ has trivial center. But this implies that 
\[N = BZ(N)\mbox{ and in particular $[N,N]=[B,B]$}.\]
This is impossible because $B\subsetneq N$ and $N$ is perfect.
\end{proof}

\begin{lem}\label{A into Inn}Both $\ff$ and $\fh$ embed $A$ into $\Inn(N)$.
\end{lem}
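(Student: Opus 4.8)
The plan is to prove two assertions for each of $\ff$ and $\fh$: that it is injective on $A$, and that it maps $A$ into $\Inn(N)$.

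For injectivity I would first note that neither $\ff$ nor $\fh$ can be trivial. Indeed, if $\ff$ were trivial, then the crossed-homomorphism identity (\ref{g prop}) would reduce to $\fg(\delta_1\delta_2)=\fg(\delta_1)\fg(\delta_2)$, making the bijective map $\fg$ an isomorphism $G\simeq N$; this is impossible since $N$ is perfect while $G$ is not, its quotient $G/A$ being the nontrivial abelian group $C_p$. If $\fh$ were trivial, then Proposition~\ref{h prop}(d) applied with $\ker(\fh)=G$ would make $\fg$ a bijective anti-homomorphism, again forcing $G\simeq N$, the same contradiction. Hence, by (\ref{normal sub}), each of $\ker(\ff)$ and $\ker(\fh)$ is either trivial or equal to $A$. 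Next I would rule out the latter: if $\ker(\ff)=A$, then Proposition~\ref{h prop}(c) shows $\fg|_A$ is a homomorphism, and it is injective because $\fg$ is bijective, so $\fg(A)$ is a subgroup of $N$ isomorphic to $A$, contradicting Lemma~\ref{A in N lem}; if $\ker(\fh)=A$, then Proposition~\ref{h prop}(d) makes $\fg|_A$ an injective anti-homomorphism, whose image is likewise a copy of $A$ inside $N$ (using $A\simeq A^{\mathrm{op}}$), the same contradiction. Thus $\ker(\ff)=\ker(\fh)=1$ and both maps are injective, in particular on $A$.

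For the second assertion the crucial input is that $\Out(N)$ is solvable. Since $N$ is quasisimple with $N/Z(N)\simeq A$, I would use that the characteristic subgroup $Z(N)$ induces a homomorphism $\Aut(N)\to\Aut(A)$; its kernel consists of those $\phi$ with $\phi(\eta)\eta^{-1}\in Z(N)$ for all $\eta$, and because $N$ is perfect the assignment $\eta\mapsto\phi(\eta)\eta^{-1}$ is a homomorphism $N\to Z(N)$ and hence trivial. So $\Aut(N)$ embeds into $\Aut(A)$, carrying $\Inn(N)$ isomorphically onto $\Inn(A)$, whence $\Out(N)$ embeds into $\Out(A)$; the latter is solvable by Lemma~\ref{CFSG lem}(a). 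I would then compose $\ff|_A$ with the projection $\Aut(N)\to\Out(N)$: its image is a homomorphic image of the simple group $A$, hence trivial or isomorphic to $A$, and the second option would embed the insolvable group $A$ into the solvable group $\Out(N)$, which is absurd. Therefore the image is trivial, i.e. $\ff(A)\subseteq\Inn(N)$, and the identical argument gives $\fh(A)\subseteq\Inn(N)$.

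The main obstacle is the solvability of $\Out(N)$ in the second step; everything else follows quickly from Proposition~\ref{h prop} together with Lemma~\ref{A in N lem}. This solvability hinges on two facts I would record cleanly: the perfectness of $N$, which yields the embedding $\Aut(N)\hookrightarrow\Aut(A)$, and the classification-based solvability of $\Out(A)$ from Lemma~\ref{CFSG lem}(a), which can then be invoked directly.
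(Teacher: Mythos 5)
Your proof is correct and follows essentially the same route as the paper's: the solvability of $\Out(N)$ together with the perfectness (equivalently, for your variant, the simplicity and insolvability) of $A$ forces $\ff(A),\fh(A)\subseteq\Inn(N)$, and Proposition~\ref{h prop}(c),(d) combined with Lemma~\ref{A in N lem} rules out $\ff|_A$ or $\fh|_A$ being trivial, which by (\ref{normal sub}) is the only alternative to injectivity on $A$. The only differences are cosmetic: you write out the embedding $\Aut(N)\hookrightarrow\Aut(A)$ underlying the solvability of $\Out(N)$, which the paper delegates to a citation, and you treat the case $\ker(\ff)=G$ separately even though it is subsumed by the same $\fg|_A$ argument.
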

\begin{proof}Notice that $\Out(N)$ is solvable by Lemma~\ref{CFSG lem}(a); see the proof of \cite[Lemma 3.6]{Tsang HG}. Since $A$ is perfect, the homomorphisms
\[\begin{tikzcd}[column sep = 2cm]
A \arrow{r}{\ff,\fh} & \Aut(N) \arrow{r}{\mbox{\tiny quotient}} & \Out(N)
\end{tikzcd}\]
are trivial, whence $\ff(A)$ and $\fh(A)$ lie in $\Inn(N)$. Observe that the map
\[A\longrightarrow N;\hspace{1em}\begin{cases}
x\mapsto\fg(x) & \mbox{if $\ff|_A$\mbox{ were trivial}}\\ 
x\mapsto\fg(x)^{-1}&\mbox{if $\fh|_A$ were trivial}
\end{cases}\]
would be a homomorphism by Propositions~\ref{h prop}(c),(d), and so $A$ would embed into $N$ because $\fg$ is bijective. But this is impossible by Lemma~\ref{A in N lem}, so both $\ff$ and $\fh$ are injective on $A$, as desired.
\end{proof}

Lemma~\ref{A into Inn} tells us that $\ff$ and $\fh$, respectively, induce isomorphisms
\[ f,h : A\longrightarrow N/Z(N);\hspace{1em}\begin{cases}f(\sigma) = \widetilde{f}(\sigma)Z(N),\\h(\sigma) = \widetilde{h}(\sigma)Z(N),\end{cases}\]
where $\widetilde{f}(\sigma),\widetilde{h}(\sigma)\in N$ are such that for all $x\in N$, we have
\[ \ff(\sigma)(x) = \widetilde{f}(\sigma)x\widetilde{f}(\sigma)^{-1}\mbox{ and }\fh(\sigma)(x)= \widetilde{h}(\sigma)x\widetilde{h}(\sigma)^{-1}.\]
Since $\fg$ is bijective, by Proposition~\ref{char prop}(b), we know that $\fg^{-1}(Z(N)) = \langle\zeta\rangle$ for some element $\zeta\in G$ of order $p$. Note also that $Z(N) = \langle\fg(\zeta)\rangle$.

\begin{proof}[Proof of Theorem~$\ref{thm3}$ Condition (2)] Consider $\varphi = f^{-1}\circ h$, which is an automorphism on $A$. For any $\sigma\in A$, we have
\[ \varphi(\sigma) = \sigma\iff f(\sigma) = h(\sigma) \iff \ff(\sigma) = \fh(\sigma) \iff \sigma \in \langle\zeta\rangle\cap A\]
by Proposition~\ref{h prop}(b). Since $\varphi$ has a non-trivial fixed point by Lemma~\ref{CFSG lem}(b), we deduce that $\zeta\in A$, and $\varphi$ has exactly $p$ fixed points, namely the elements of $\langle\zeta\rangle$. This proves the claim. \end{proof}

Now, we also know that $\zeta\in A$, so the element $\widetilde{f}(\zeta)\in N$ is defined.

\begin{proof}[Proof of Theorem~$\ref{thm3}$ Condition (3)] Take $\widetilde{\zeta} = \widetilde{f}(\zeta)$, and $\widetilde{\zeta}Z(N) = f(\zeta)$ has order $p$ because $f$ is an isomorphism. Suppose for contradiction that there is an element $\eta\in N$ such that
\[ \eta\widetilde{f}(\zeta) \equiv \widetilde{f}(\zeta)\eta\hspace{-2mm}\pmod{Z(N)}\mbox{ but } \eta\widetilde{f}(\zeta)\neq\widetilde{f}(\zeta)\eta.\]
Since $Z(N)=\langle\fg(\zeta)\rangle$, there exists $i\in\bZ$ with $i\not\equiv0$ (mod $p$) such that
\[ \widetilde{f}(\zeta)\eta\widetilde{f}(\zeta)^{-1}\eta^{-1} = \fg(\zeta)^i,\mbox{ or equivalently }\widetilde{f}(\zeta)\eta\widetilde{f}(\zeta)^{-1} = \fg(\zeta)^i\eta.\]
Let $j\in\bZ$ be such that $ij\equiv -1$ (mod $p$), and write $\eta^j = \fg(\sigma)$, where $\sigma\in G$. Then, since $\fg(\zeta)\in Z(N)$, raising the above equation to the $j$th power yields
\[ \widetilde{f}(\zeta)\fg(\sigma)\widetilde{f}(\zeta)^{-1} = \fg(\zeta)^{-1}\fg(\sigma).\]
But this implies that
\[ \fg(\zeta\sigma) = \fg(\zeta)\cdot \ff(\zeta)(\fg(\sigma)) = \fg(\zeta)\widetilde{f}(\zeta)\fg(\sigma)\widetilde{f}(\zeta)^{-1} = \fg(\sigma),\]
which contradicts that $\fg$ is bijective. This completes the proof.
\end{proof}

\begin{lem}\label{cent lem}For any $\sigma\in G$ such that $\ff(\sigma)$ fixes $Z(N)$ pointwise, we have
\[ \sigma\zeta = \zeta\sigma \mbox{ if and only if }\fg(\sigma)\widetilde{f}(\zeta) = \widetilde{f}(\zeta)\fg(\sigma).\]
\end{lem}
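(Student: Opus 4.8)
The plan is to use the bijectivity of $\fg$ to convert the relation $\sigma\zeta=\zeta\sigma$ in $G$ into an equation in $N$, and then to simplify both sides via the crossed homomorphism property (\ref{g prop}), exploiting the two facts that $\zeta\in A$ (so that $\ff(\zeta)$ acts as conjugation by $\widetilde{f}(\zeta)$) and that $\fg(\zeta)$ generates $Z(N)$. Since $\fg$ is bijective, the relation $\sigma\zeta=\zeta\sigma$ holds if and only if $\fg(\sigma\zeta)=\fg(\zeta\sigma)$, and I would expand each side using (\ref{g prop}):
\[ \fg(\sigma\zeta)=\fg(\sigma)\cdot\ff(\sigma)(\fg(\zeta)),\qquad \fg(\zeta\sigma)=\fg(\zeta)\cdot\ff(\zeta)(\fg(\sigma)). \]

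Next I would simplify each product. On the left, the hypothesis that $\ff(\sigma)$ fixes $Z(N)$ pointwise, combined with $\fg(\zeta)\in Z(N)$, yields $\ff(\sigma)(\fg(\zeta))=\fg(\zeta)$, so $\fg(\sigma\zeta)=\fg(\sigma)\fg(\zeta)$. On the right, since $\zeta\in A$ we may write $\ff(\zeta)(\fg(\sigma))=\widetilde{f}(\zeta)\fg(\sigma)\widetilde{f}(\zeta)^{-1}$ by the defining property of $\widetilde{f}(\zeta)$, and because $\fg(\zeta)$ is central we obtain $\fg(\zeta\sigma)=\widetilde{f}(\zeta)\fg(\sigma)\widetilde{f}(\zeta)^{-1}\fg(\zeta)$.

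Equating the two simplified expressions then gives
\[ \fg(\sigma)\fg(\zeta)=\widetilde{f}(\zeta)\fg(\sigma)\widetilde{f}(\zeta)^{-1}\fg(\zeta), \]
and cancelling the central factor $\fg(\zeta)$ on the right leaves $\fg(\sigma)=\widetilde{f}(\zeta)\fg(\sigma)\widetilde{f}(\zeta)^{-1}$, which is equivalent to $\fg(\sigma)\widetilde{f}(\zeta)=\widetilde{f}(\zeta)\fg(\sigma)$, as desired. I do not expect a genuine obstacle here, as the statement reduces to careful bookkeeping with the crossed homomorphism identity; the one point to handle with care is the role of the central element $\fg(\zeta)$, which is absorbed on the $\sigma\zeta$ side precisely by the hypothesis on $\ff(\sigma)$ and moved past the conjugation on the $\zeta\sigma$ side by centrality, so that the two occurrences match and cancel to produce the commutation equivalence.
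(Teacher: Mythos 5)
Your proposal is correct and follows essentially the same route as the paper: expand $\fg(\sigma\zeta)$ and $\fg(\zeta\sigma)$ via the crossed homomorphism identity, use the hypothesis on $\ff(\sigma)$ together with $\fg(\zeta)\in Z(N)$ to simplify one side and the fact that $\ff(\zeta)$ is conjugation by $\widetilde{f}(\zeta)$ to simplify the other, then cancel the central factor $\fg(\zeta)$ and invoke the bijectivity of $\fg$. The paper's proof is just a terser version of exactly this computation.
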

\begin{proof}In the case that $\ff(\sigma)$ fixes $Z(N)$ pointwise, we have
\begin{align*}
\fg(\sigma\zeta) & = \fg(\sigma)\cdot \ff(\sigma)(\fg(\zeta)) = \fg(\sigma)\fg(\zeta),\\
\fg(\zeta\sigma) & = \fg(\zeta)\cdot \ff(\zeta)(\fg(\sigma)) = \fg(\zeta)\widetilde{f}(\zeta)\fg(\sigma)\widetilde{f}(\zeta)^{-1}.
\end{align*}
Since $\fg$ is bijective and $\fg(\zeta)\in Z(N)$, we see that the claim holds.
\end{proof}

Let us use $\Cent_*(-)$ to denote the centralizer in a given group $*$.

\begin{proof}[Proof of Theorem~$\ref{thm3}$ Condition (4)] By the proof of condition (3), the map
\[ \Cent_{N}(\widetilde{f}(\zeta)) \longrightarrow \Cent_{N/Z(N)}(f(\zeta));\hspace{1em}\eta\mapsto \eta Z(N)\]
is surjective. Its kernel is clearly $Z(N)$, and this implies that
\[ |\Cent_{N}(\widetilde{f}(\zeta))| = p\cdot |\Cent_{N/Z(N)}(f(\zeta))| =  p\cdot  |\Cent_A(\zeta)|,\]
where the second equality holds because $f$ is an isomorphism. Suppose that $Z(N)$ is fixed pointwise by $\Aut(N)$. Then, from Lemma~\ref{cent lem}, we see that
\[ |\Cent_G(\zeta)| = |\Cent_N(\widetilde{f}(\zeta))|\]
since $\fg$ is bijective. Putting the equalities together, we obtain
\[ |\Cent_{G}(\zeta)| = p\cdot |\Cent_A(\zeta)|,\]
for which the claim follows.
\end{proof}

\section{Almost simple groups of alternating or sporadic type}\label{app sec}

In this section, let $\Gamma$ be a finite almost simple group, which is non-simple, and whose socle is an alternating group or a sporadic simple group. We shall apply our theorems to determine the numbers $e(\Gamma,\Delta)$ for all groups $\Delta$ of the same order as $\Gamma$, except when $\Gamma\simeq\Aut(A_6)$.

\vspace{1mm}

First, suppose that the socle of $\Gamma$ is an alternating group. It is known that
\[ \Out(A_n) \simeq C_2\mbox{ for $n\geq 5$ with $n\neq 6$, and }\Out(A_6)=C_2\times C_2.\]
Since we assumed that $\Gamma$ is non-simple, either
\[  \Gamma\simeq S_n\mbox{ with $n\geq 5$, or }\Gamma\simeq\mathrm{PGL}_2(9),\mathrm{M}_{10},\Aut(A_6). \]
For $\Gamma\simeq S_n$ with $n\geq 5$, the numbers $e(S_n,\Delta)$ are already known by \cite{Childs simple} and \cite{Tsang Sn}. For both $\Gamma\simeq\mathrm{PGL}_2(9),\mathrm{M}_{10}$, by Theorems~\ref{thm2} and~\ref{thm3}(a), we know that
\[e(\Gamma,\Delta) \neq 0 \mbox{ only if }\Delta\simeq A_6\times C_2,S_6,\mathrm{PGL}_2(9),\mathrm{M}_{10},2 A_6,\]
where $2A_6$ is the double cover of $A_6$. By applying Theorems~\ref{thm old} and~\ref{thm1}, we computed in \textsc{Magma} \cite{magma} that

\[ \begin{cases}
e(\mathrm{PGL}_2(9),\mathrm{PGL}_2(9)) = 92\mbox{ and }e(\mathrm{PGL}_2(9),A_6\times C_2) = 72,\\
e(\mathrm{M}_{10},\mathrm{M}_{10}) = 92\mbox{ and }e(\mathrm{M}_{10},A_6\times C_2) = 0.\end{cases}\]
Since $2A_6$ does not satisfy condition (3) in Theorem~\ref{thm3}, by \cite[Lemma 2.7]{Tsang Sn} for example, we also have
\[ e(\mathrm{PGL}_2(9), 2A_6) = 0 = e(\mathrm{M}_{10},2A_6).\]
Using (\ref{B formula}) and a similar code as in the appendix of \cite{Tsang Sn}, we found that
\[\begin{cases}
e(\mathrm{PGL}_2(9),S_6) = 0\mbox{ and }e(\mathrm{PGL}_2(9),\mathrm{M}_{10})  = 60,\\
e(\mathrm{M}_{10},S_6)  = 72\mbox{ and }e(\mathrm{M}_{10},\mathrm{PGL}_2(9))  = 60.
\end{cases}\]
We have thus determined $e(\Gamma,\Delta)$ completely except when $\Gamma\simeq\Aut(A_6)$.


\begin{remark}Observe that
\[ e(\Gamma_1,\Gamma_2) = e(\Gamma_2,\Gamma_1)\mbox{ for all }\Gamma_1,\Gamma_2\in\{S_6,\mathrm{PGL}_2(9),\mathrm{M}_{10}\}\]
by the above and (\ref{S6 special}). These symmetries could possibly be a special case of a more general phenomenon, and perhaps come from the fact that
\[ \Aut(A_6)\simeq \Aut(S_6) \simeq \Aut(\mathrm{PGL}_2(9)) \simeq \Aut(\mathrm{M}_{10}),\]
together with the formulae in (\ref{B formula}) and Proposition~\ref{formula prop}.
\end{remark}

\vspace{1mm}

Next, suppose that the socle of $\Gamma$ is one of the $26$ sporadic simple groups. The outer automorphism group of a sporadic simple group $A$ has order dividing two, and is non-trivial precisely when
\[A\simeq\mathrm{M}_{12},\mathrm{M}_{22},\mbox{HS},\mathrm{J}_{2},\mbox{McL},\mbox{Suz},\mbox{He},\mbox{HN},\mbox{Fi}_{22},\mbox{Fi'}_{24},\mbox{O'N},\mathrm{J}_3,\]
where the notation is standard. Since we assumed that $\Gamma$ is non-simple, we see that $\Gamma\simeq \Aut(A)$ for one of the sporadic simple groups $A$ listed above. By Theorems~\ref{thm2} and~\ref{thm3}(a), we know that
\[ e(\Gamma,\Delta) \neq 0 \mbox{ only if }\Delta\simeq A\times C_2,\Aut(A),\mbox{ or $\Delta$ is a double cover of $A$}.\]
The element structures of $A$ as well as its covers and $\Aut(A)$ are available in the \textsc{Atlas} \cite{Atlas}. Using \cite{Atlas} and Theorem~\ref{thm old}, the number $e(\Gamma,\Gamma)$ has already been computed in \cite[p. 953]{Tsang ASG}. Similarly, we found that
\begin{longtable}{|c|c|}
\hline
\hspace{7mm}$A$\hspace{7mm} & \hspace{5mm}$e(\Gamma,A\times C_2)$ for $\Gamma\simeq \Aut(A)$\hspace{5mm} \\
\hline
$\mathrm{M}_{12}$ & $1,584$\\
$\mathrm{M}_{22}$ & $3,432$ \\
$\mbox{HS}$ & $48,400$ \\
$\mathrm{J}_{2}$ & $3,600$ \\
$\mbox{McL}$ & $226,800$ \\
$\mbox{Suz}$ & $5,458,752$\\
$\mbox{He}$ & $533,120$ \\
$\mbox{HN}$ & $150,480,000$\\
$\mbox{Fi}_{22}$ & $83,521,152$ \\
$\mbox{Fi'}_{24}$ & $11,373,535,579,392$ \\
$\mbox{O'N}$ & $5,249,664$ \\
$\mathrm{J}_3$ & $41,040$ \\
\hline
\end{longtable}
\noindent by applying Theorem~\ref{thm1}. A double cover of $A$ exists if and only if the Schur multiplier $\mathrm{Schur}(A)$ of $A$ has order divisible by two. Among the $12$ sporadic simple groups $A$ above, it is known that 
\[\mathrm{Schur}(A)\mbox{ has even order} \iff A \simeq \mathrm{M}_{12},\mathrm{M}_{22},\mathrm{HS},\mathrm{J}_2,\mathrm{Suz},\mathrm{Fi}_{22}.\]
For these six sporadic simple groups $A$, based on \cite{Atlas}, there is no element in $\Aut(A)$ whose centralizer has order $2$ or $4$. This implies that condition (2) in Theorem~\ref{thm3} is not satisfied, so $e(\Gamma,\Delta)=0$ if $\Delta$ is a double cover of $A$. We have thus determined $e(\Gamma,\Delta)$ completely.

\section*{Acknowledgments}

Research supported by the Young Scientists Fund of the National Natural Science Foundation of China (Award No.: 11901587).

\vspace{1mm}

The author thanks the referee for helpful comments.

\end{document}